\newtheorem{thm}{Theorem}[section]
\newtheorem{lem}[thm]{Lemma}
\newtheorem{prp}[thm]{Proposition}
\newtheorem{cor}[thm]{Corollary}
\theoremstyle{definition}
\newtheorem{dfn}[thm]{Definition}
\newtheorem{con}[thm]{Conjecture}
\theoremstyle{remark}
\newtheorem{rmk}[thm]{Remark}
\numberwithin{equation}{section}
\newcommand{\R}{\mathbb R}
\newcommand{\Z}{\mathbb Z}
\newcommand{\Pro}{\mathbb P}
\newcommand{\C}{\mathbb C}
\newcommand{\op}[1]{\operatorname{#1}}
\begin{document}

\title{The contact property for nowhere vanishing magnetic fields on $S^2$}

\author{Gabriele Benedetti}
\address{Department of Pure Mathematics and Mathematical Statistics, University of
Cambridge, Cambridge CB3 0WB, UK}
\email{G.Benedetti@dpmms.cam.ac.uk}

\subjclass[2010]{37J05, 37J55, 53D05, 53D10}

\keywords{Dynamical systems, Magnetic field, Symplectic geometry, Contact hypersurfaces, Dynamical convexity}
\date{\today}
\begin{abstract}
In this paper we give some positive and negative results about the contact property for the energy levels $\Sigma_c$ of a symplectic magnetic field on $S^2$. In the first part we focus on the case of the area form on a surface of revolution. We state a sufficient condition for an energy level to be of contact type and give an example where the contact property fails. If the magnetic curvature is positive, the dynamics and the action of invariant measures can be numerically computed. This hints at the conjecture that an energy level of a symplectic magnetic field with positive magnetic curvature should be of contact type.

In the second part we show that, for small energies, there exists a convex hypersurface $N_c$ in $\mathbb C^2$ and a double cover $N_c\rightarrow\Sigma_c$ such that the pull-back of the characteristic distribution on $\Sigma_c$ is the standard characteristic distribution on $N_c$. As a corollary we prove that there are either $2$ or infinitely many periodic orbits on $\Sigma_c$. The second alternative holds if there exists a contractible prime periodic orbit.
\end{abstract}
\maketitle
\begingroup
\let\clearpage\relax
\section{Introduction}\label{sec_int}
Let the triple $(M,g,\sigma)$ represent a magnetic system, where $M$ is a closed manifold, $g$ is a Riemannian metric and $\sigma$ is a closed $2$-form on $M$. A magnetic system gives rise to a Hamiltonian vector field $X^{g,\sigma}$ on the symplectic manifold $(TM,d\alpha-\pi^*\sigma)$, where $\pi$ is the projection from $TM$ to $M$ and $\alpha$ is the pull-back of the Liouville $1$-form $\lambda$ on the cotangent bundle via the duality isomorphism given by $g$. The kinetic energy $E\big((x,v)\big)=\frac{1}{2}g_x(v,v)$ is the Hamiltonian function associated to $X^{g,\sigma}$. The dynamics of the magnetic field received much attention in the last three decades, in particular as regard the existence of periodic orbits. When $M$ is a surface, the two classical approaches that have been pursued are Morse-Novikov theory and symplectic topology (see Ta{\u\i}manov's \cite{tai4} and Ginzburg's \cite{ginsur} surveys for details and further references). More recently many other techniques have been developed. Some of 
them rely on the (weakly) exactness of the magnetic form \cite{bahtai,pol1,mac1,cmp,osu,con,pat2,mer1,frasch2,tai5}. Others seek solutions with low kinetic energy \cite{schl}, the majority of them assuming further that $\sigma$ is symplectic \cite{ker,ginker1,mac2,ginker2,cgk,gg1,ker3,lu,gg2,ush}. Schneider's approach \cite{sch1,sch2,sch3} for orientable surfaces and symplectic $\sigma$ uses a suitable index theory and shows in a very transparent way how the Riemannian geometry of $g$ influences the problem. Finally, we point out \cite{koh} where heat flow techniques are employed and \cite{fmp1,fmp2} which construct a Floer theory for particular magnetic fields.
This paper would like to give its own contribution to the subject by studying some aspects of magnetic dynamics when $M$ is the $2$-sphere and $\sigma$ is symplectic.

\bigskip For a general magnetic system we know that the zero section is the set of rest points for the flow and all the smooth hypersurfaces $\Sigma_c:=\{E=c\}$, with $c>0$, are invariant sets. Hence, we can analyze the dynamics $X^{g,\sigma}\big|_{\Sigma_c}$ separately for every $c$. In particular we are interested to determine whether $\Sigma_c$ is of contact type or not. Hypersurfaces of contact type in symplectic manifolds have been intensively studied in relation to the problem of the existence of closed orbits. After some positive results in particular cases \cite{wei1,rab1,rab2}, in 1978 Alan Weinstein conjectured that every closed hypersurface of contact type (under some additional homological condition now thought to be unnecessary) carries a periodic orbit \cite{wei2}. The conjecture is still open in its full generality, but for magnetic systems on orientable surfaces is a consequence of the solution to the conjecture for closed $3$-manifold by Taubes \cite{tau1}. Moreover, Cristofaro-Gardiner and 
Hutchings have recently improved the lower bound on the number of periodic orbits in dimension $3$ to two \cite{crihut}. The case of irrational ellipsoids in $\C^2$ shows that their estimate is sharp.

The main examples of hypersurfaces of contact type are given by boundaries of star-shaped domains in $\C^n$ and fiberwise star-shaped regions in standard cotangent bundles $(T^*M,d\lambda)$. From this second kind of examples we also deduce that, if $H:T^*M\rightarrow\R$ is a convex superlinear Hamiltonian, its energy levels above the strict Ma\~n\'e critical value are of contact type (see \cite[Corollary 2]{cipp}). In turn exact magnetic systems can be reduced to this class after applying the composition of a translation in the fibers by a primitive of $\sigma$ with the duality isomorphism given by $g$. The strict critical value in this case takes the form
\begin{equation}
c_0(g,\sigma):=\inf_{\{\beta\, |\, d\beta=\sigma\}}\sup_{x\in M}\frac{1}{2}g_x(\beta_x,\beta_x). 
\end{equation}
Hence, if $c>c_0(g,\sigma)$, $\Sigma_c$ is of contact type. If $M$ is an orientable surface different from the $2$-torus, in \cite{cmp} the converse implication is also proved. Namely, $\Sigma_c$ is not of contact type if $c\leq c_0(g,\sigma)$. One of the goals of this paper is to understand how the picture changes on orientable surfaces when $\sigma$ is not exact, starting by analyzing the case of a symplectic $\sigma$ on $S^2$. Henceforth, we will always be under such hypotheses unless stated otherwise.

In this setting we claim that $d\alpha-\pi^*\sigma$ is still exact on $\Sigma_c$. If $K$ is the Gaussian curvature of $g$ and $\mu$ its Riemannian volume, we define $\overline{\sigma}:=\sigma-\frac{[\sigma]}{4\pi}K\mu$, where $[\sigma]=\int_{S^2}\sigma$. Then, $\overline{\sigma}$ is exact and any $1$-form $\beta$ such that $d\beta=\overline{\sigma}$ yields a primitive $\alpha-\pi^*\beta+\frac{[\sigma]}{4\pi}\frac{\psi}{2c}$ of $(d\alpha-\pi^*\sigma)\big|_{\Sigma_c}$, where $\psi$ is the Levi Civita connection form.
Using this kind of primitives in Proposition \ref{boundi} we find a subset of energies $\widehat{\mathcal C}(g,\sigma)$, where the contact property holds. If $f:S^2\rightarrow \R$ is defined by $\sigma=f\mu$ and we set $m(g,\sigma):=\sqrt{2c_0(g,\overline{\sigma})}$, then $\widehat{\mathcal C}(g,\sigma)=(0,+\infty)$ provided $m(g,\sigma)^2<\frac{[\sigma] \inf f}{\pi}$ and $(0,\frac{1}{2}m^-(g,\sigma)^2)\cup(\frac{1}{2}m^+(g,\sigma)^2,+\infty)\subset \widehat{\mathcal C}(g,\sigma)$ otherwise. Here $m^-(g,\sigma)$ and $m^+(g,\sigma)$ are the two roots of the following quadratic equation in the variable $m$:
\begin{equation}
m^2-m(g,\sigma)m+\frac{[\sigma] \inf f}{4\pi}=0.
\end{equation}
A first natural choice, when $K>0$, would be to take $\sigma=K\mu$. In this case $m(g,K\mu)=0$ and every energy level is of contact type. A second natural choice, without any assumption on $K$, would be to take $\sigma=\mu$. In this case the inclusion of the two intervals in $\widehat{\mathcal C}(g,\mu)$ is an equality, so that we have a precise description of this set, once we know $m(g,\mu)$.

In Section \ref{sec_rev} we carry out such computation for surfaces of revolution highlighting the relation between $m(g,\mu)$ and geometric properties of $(S^2,g)$. For example, in Proposition \ref{curvin} we show that $m(g,\mu)^2<\frac{[\mu]}{\pi}$, and hence $\widehat{\mathcal C}(g,\mu)=(0,+\infty)$, provided the surface of revolution is symmetric with respect to the equator and the curvature increases when we move from the poles to the equator. On the other hand, we will see that, for surfaces with normalized area, $m(g,\mu)$ can be arbitrarily big if the curvature is sufficiently concentrated around at least one of the poles. This opposite behaviour yields the following result.
\begin{prp}\label{bigm}
For every $C>0$, there exists a convex surface with total area $4\pi$ such that $m(g,\mu)>C$.
\end{prp}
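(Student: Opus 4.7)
The plan is to reduce to an explicit one-variable optimization on surfaces of revolution and then exhibit a family of convex profiles along which $m(g,\mu)$ blows up. Writing $g = dr^2 + \rho(r)^2\,d\theta^2$ in arclength coordinates $r\in[0,L]$, so that $K = -\rho''/\rho$ and $\overline{\mu} = (\rho+\rho'')\,dr\wedge d\theta$, one can average any primitive $\beta$ of $\overline{\mu}$ over the $S^1$-action without increasing $\sup_x|\beta_x|_g$ (because $g$ and $\overline{\mu}$ are $S^1$-invariant). Every $S^1$-invariant primitive has the form $p(r)\,dr + q(r)\,d\theta$ with $q(r) = \int_0^r\rho\,ds + \rho'(r) - 1$ uniquely determined by smoothness at the poles, and the $p\,dr$ term contributes non-negatively to the pointwise norm. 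Thus (and in accordance with the formulas of Section \ref{sec_rev})
\begin{equation*}
m(g,\mu)^2 \;=\; \sup_{r\in[0,L]}\frac{q(r)^2}{\rho(r)^2}.
\end{equation*}

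The key example is a \emph{capsule}: two hemispherical caps of radius $r_0$ joined by a cylindrical neck of length $h = 2(1-r_0^2)/r_0$, chosen so that the total area $4\pi r_0^2 + 2\pi r_0 h$ equals $4\pi$. On the neck $\rho \equiv r_0$ and $\rho' \equiv 0$, and the area of the first cap is $r_0^2$, so $q$ is linear in $r$ along the neck with values $q = r_0^2-1$ at the top and $q = r_0^2 + r_0h - 1 = 1 - r_0^2$ at the bottom. Therefore $\sup_r |q/\rho| \geq (1-r_0^2)/r_0$, which exceeds $C$ as soon as $r_0$ is chosen sufficiently small.

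The capsule is only weakly convex ($K$ vanishes on the neck), so the last step is to smooth it into a strictly convex profile $\rho_\epsilon$ preserving the pole conditions $\rho_\epsilon(0)=\rho_\epsilon(L)=0$, $\rho_\epsilon'(0)=1$, $\rho_\epsilon'(L)=-1$ and the total area $4\pi$. I would do this by rounding the two corners where the caps meet the neck and by perturbing the straight tube to a gently concave arc (e.g.\ replacing $r_0$ by $r_0 - \epsilon(r-L/2)^2$ in the interior and then rescaling $h$ to restore the area). Since $q$ and $\rho$ depend continuously on $\rho$ in the $C^1$ topology and $\rho_\epsilon$ can be kept $C^1$-close to the capsule away from the poles, $\sup|q_\epsilon/\rho_\epsilon|$ stays close to $(1-r_0^2)/r_0$ and still exceeds $C$. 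The main obstacle is arranging strict concavity, pole boundary data, and the area normalization simultaneously; this is elementary but fiddly, and a cleaner alternative is to bypass the smoothing entirely by working with prolate spheroids of revolution with small semi-minor axis $b$, which are smooth and strictly convex and for which a direct asymptotic calculation yields $m(g,\mu) \asymp 1/b$ as $b\to 0^+$.
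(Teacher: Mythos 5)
Your reduction to $m(g,\mu)=\sup_r|q(r)/\rho(r)|$ via $S^1$-averaging matches Propositions \ref{inva} and \ref{estm} in the paper, and your capsule heuristic captures the right mechanism. In fact the paper's Lemma \ref{bigmlem} is a rigorous realization of essentially the same picture: start with the profile $\gamma_a(t)=a\cos(t/a)$ of a round sphere of small radius $a$, then \emph{stretch the domain in the interior} by a diffeomorphism $F_C$ (odd, $\dot F_C\geq1$, $\ddot F_C\leq0$ on $t\geq0$, equal to $t\pm C$ near the ends of the interval) until the area returns to $4\pi$. The sign conditions on $F_C$ propagate to $\ddot\gamma\leq0$ by the chain rule, so the stretched profile $\gamma_a\circ F_C^{-1}$ is automatically smooth and convex with no separate smoothing step required. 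The final estimate is then made at $t=\delta$ near the pole, where $\gamma(\delta)\leq\delta$, $\dot\gamma(\delta)<\varepsilon$, and $\Gamma(\delta)\leq-1+\delta^2$ give $m_\gamma\geq(1-\varepsilon)/\delta+\delta$ — the same $1/r_0$ asymptotic you read off the capsule.

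The genuine gap in your proposal is the smoothing. The capsule profile is $C^1$ but not $C^2$ at the cap--neck junctions (the curvature jumps from $1/r_0^2$ to $0$), so the metric is not smooth and $m(g,\mu)$ is not yet defined in the smooth category. Note that the paper's ``convex'' means $K\geq 0$, so weak convexity would actually be acceptable; the obstruction is regularity, not strictness. Your proposed fix — rounding the junctions, perturbing the tube to a concave arc, then rescaling to restore the area — is precisely where the fiddly simultaneous constraints (smoothness, $\ddot\rho\leq0$, pole boundary data $\rho(0)=0$, $\rho'(0)=1$ with vanishing even derivatives, and $\int\rho\,dr=2$) must all be verified, and your sketch does not exhibit a family of profiles satisfying all of them at once. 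The stretch-diffeomorphism device in Lemma \ref{bigmlem} is exactly the mechanism the paper uses to discharge all four in one stroke. Your prolate-spheroid alternative would indeed sidestep the smoothing issue entirely, but as written $m\asymp1/b$ is an assertion rather than a computation, so it does not close the gap on its own.
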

At this point one would like to understand how good is the set $\widehat{\mathcal C}(g,\mu)$ in approximating the actual set of energies where the contact property holds. For this purpose we employ McDuff's criterion \cite{mcd}, which says that $\Sigma_c$ is of contact type provided all the $X^{g,\sigma}$-invariant measures supported on this hypersurface have positive action. Finding the actions of an invariant measure is usually a difficult task. However, for surfaces of revolution there are always some latitudes that are the supports of periodic orbits. We compute the action of such latitudes in Proposition \ref{paract}. If $\jmath:TS^2\rightarrow TS^2$ is the complex structure induced by $g$, we can define the magnetic curvature $K_c:\Sigma_c\rightarrow\R$ as $K_c:=2cK-\sqrt{2c}(df\circ\jmath)+f$. In Proposition \ref{reddyn} we prove that if $K_c>0$, we only have two periodic orbits which are latitudes and, by Proposition \ref{paract}, their action is positive. Therefore, they do not represent an 
obstruction to the contact property. In addition, under the same curvature assumption, we are able to give a simple description of the dynamics of the symplectic reduction of the system induced by the rotational symmetry. In particular, this allows us to devise a numerical strategy to compute the action of all the ergodic invariant measures as we explain in Section \ref{sub_act}. The data we have collected suggest that all such actions are positive, hinting, therefore, at the following conjecture.
\begin{con}\label{conj}
Let $(S^2,g,\sigma)$ be a symplectic magnetic system and suppose that for some $c>0$, the magnetic curvature is positive. Then, $\Sigma_{c}$ is of contact type.
\end{con}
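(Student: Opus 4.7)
The natural strategy is to apply McDuff's criterion via the primitive
\begin{equation*}
\eta := \alpha - \pi^*\beta + \frac{[\sigma]}{8\pi c}\,\psi
\end{equation*}
of $(d\alpha-\pi^*\sigma)|_{\Sigma_c}$ singled out in the introduction, where $\beta$ is any primitive of $\overline\sigma$. Concretely, one must show that every $X^{g,\sigma}$-invariant Borel probability measure on $\Sigma_c$ has strictly positive $\eta$-action, or, even better, that $\eta(X^{g,\sigma})>0$ pointwise for a judicious choice of $\beta$. Using $\alpha(X^{g,\sigma})=2c$ on $\Sigma_c$ together with the identification of $\psi(X^{g,\sigma})$ with the rotation rate of $\dot\gamma$ under the Lorentz force (which is in turn determined by $f$), this pointwise quantity is explicitly computable in terms of $\beta$, $f$ and the Riemannian geometry, so the analytic task is to leverage $K_c>0$ to ensure positivity.

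The hope is that $K_c>0$ enters via a dynamical convexity statement. Since $K_c=2cK-\sqrt{2c}(df\circ\jmath)+f$ is precisely the coefficient appearing in the magnetic Jacobi equation exploited in the proofs of Propositions \ref{reddyn} and \ref{paract}, its positivity should force a uniform twist of the linearized flow along every orbit. One would translate this twist, after choosing $\beta$ optimally so that $\sup|\beta|$ is close to $m(g,\sigma)$, into either a pointwise lower bound for $\eta(X^{g,\sigma})$ or at least an averaged lower bound against invariant measures. The conjecture already holds in two endpoint regimes: for the two latitude orbits in the rotationally symmetric case by Proposition \ref{paract}, and for small $c$ by the double cover $N_c\to\Sigma_c$ built in the second part of the paper, whose convexity in $\C^2$ automatically forces the contact property to descend. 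The conjecture may thus be viewed as an interpolation between these two situations.

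The principal obstacle is handling invariant measures whose support is neither a periodic orbit nor an invariant torus on which a symplectic reduction is available. A promising route is a genuine dynamical convexity statement in the sense of Hofer-Wysocki-Zehnder for the lift of $\Sigma_c$ to $N_c$: show that $K_c>0$ forces every closed orbit on $N_c$ to have Conley-Zehnder index at least three, and conclude positivity of the action for all invariant measures by a Birkhoff-type approximation argument. A complementary approach is open-closedness: the set $\{c>0 : \Sigma_c \text{ is of contact type}\}\cap\{c>0:K_c>0\}$ is non-empty by the small-energy result and open by stability of contact forms, so it would remain to prove closedness inside $\{c>0:K_c>0\}$. This last step would require a uniform action bound along sequences $c_n\to c$, and converting $K_c>0$ into such a global consequence appears to be the genuinely new input that the conjecture demands.
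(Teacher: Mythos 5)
This statement is a \emph{conjecture} (Conjecture \ref{conj}), not a theorem: the paper itself offers no proof of it. What the paper does provide is (i) numerical evidence in Section \ref{sub_act}, where for ellipsoids of revolution the action of ergodic invariant measures is computed and found positive on all energy levels; (ii) the verification via Propositions \ref{reddyn} and \ref{paract} that, when $K_m>0$, the two latitude orbits carry positive action and hence present no obstruction; and (iii) the remark that a proof would recover Schneider's two-closed-orbits theorem. There is therefore no paper proof for your attempt to be measured against.

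That said, your sketch correctly identifies the ingredients the paper has in view: McDuff's criterion (Proposition \ref{mcdcri}), the distinguished primitive $\alpha-\pi^*\beta+\tfrac{[\sigma]}{8\pi c}\psi$ of $(d\alpha-\pi^*\sigma)|_{\Sigma_c}$, the appearance of $K_c$ as the coefficient governing the linearized flow (as used in the proof of Proposition \ref{reddyn}), and the connection to dynamical convexity from Section \ref{sec_dyn2}. You also honestly flag the genuine gap: McDuff's criterion requires controlling the action of \emph{every} null-homologous invariant measure, and outside periodic orbits and tori where symplectic reduction is available, there is no mechanism in the paper (nor in your proposal) to turn the pointwise inequality $K_c>0$ into a positive lower bound on such actions. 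Your open/closed strategy is natural but, as you yourself observe, its closedness half demands exactly the uniform action bound that $K_c>0$ is not known to provide; and a Birkhoff-type approximation of arbitrary invariant measures by periodic orbits is not available in this generality. In short, the proposal is a reasonable map of the problem but is not a proof, and it does not claim to be one.
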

The numerical computations, and possibly an affirmative answer to the conjecture, would then indicate that the system $(S^2,g,\mu)$ associated to a convex surface of revolution would be of contact type at every energy level and show that $\widehat{\mathcal C}(g,\mu)$ does not describe the actual set of contact type energies in the cases provided by Proposition \ref{bigm}. We also remark that establishing the conjecture will yield another proof of Corollary 1.3 in \cite{sch2} about the existence of two closed orbits on every energy level, when $K\geq0$ and $f>0$.

To complete the picture, in Proposition \ref{nonnec} we construct energy levels of contact type without the positive magnetic curvature assumption and, using again Proposition \ref{paract}, in Proposition \ref{noncon} we give an example of an energy level which is not of contact type.

\bigskip In the last part of the paper we drop the rotational symmetry assumption and we focus on low energy levels. By the previous discussion, we know that they are of contact type. Moreover, they are diffeomorphic to the real projective space so that we have double covers $p_c:S^3\rightarrow\Sigma_c$. Thus, we can pull back the contact form on $\Sigma_c$ to a contact form on $S^3$. In \cite{hwz1}, Hofer, Wysocki and Zehnder singled out the subclass of \textit{tight} and \textit{dynamically convex} contact form on $S^3$ for which the associated Reeb dynamics has a simple qualitative description via a disk-like surface of section. We shall prove in Section \ref{sec_dyn2} that our pull-back forms fall into this class.
\begin{prp}\label{corfinalo}
If the energy $c$ is low enough, there exists a contact form $\tau_c$ on $\Sigma_c$ and a covering map $p_c:S^3\rightarrow \Sigma_c$ such that
\begin{equation}
\bullet\ d\tau_c=(d\alpha-\pi^*\sigma)\big|_{\Sigma_c}\quad\bullet\ p_c^*\tau_c\mbox{ is tight and dynamically convex}.
\end{equation}
\end{prp}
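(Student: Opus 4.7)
The strategy is to construct, for $c>0$ sufficiently small, a strictly convex hypersurface $N_c\subset(\C^2,\omega_0)$ invariant under the antipodal involution $\iota:(z_1,z_2)\mapsto(-z_1,-z_2)$, together with an identification $N_c/\langle\iota\rangle\cong\Sigma_c$ intertwining the characteristic distributions. Since $\iota$ preserves the Liouville $1$-form $\lambda_0:=\tfrac12\sum(x_i\,dy_i-y_i\,dx_i)$, the restriction $\lambda_0|_{N_c}$ descends to a $1$-form $\tau_c$ on $\Sigma_c$ with $p_c^*\tau_c=\lambda_0|_{N_c}$; if the identification comes from a symplectic double cover of tubular neighborhoods, then automatically $d\tau_c=(d\alpha-\pi^*\sigma)|_{\Sigma_c}$. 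Tightness and dynamical convexity of $p_c^*\tau_c$ will then follow from the Hofer--Wysocki--Zehnder theorem \cite{hwz1} on convex hypersurfaces in $\C^2$.

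The key construction exploits the structure of the zero section $Z\cong S^2\subset TS^2$: with respect to $\omega:=d\alpha-\pi^*\sigma$ it is a symplectic submanifold of area $-[\sigma]$, with normal bundle $TS^2$ of Euler number $+2$. For $c$ small, $\Sigma_c$ is a small tubular boundary of $Z$, diffeomorphic to $\R\Pro^3\cong STS^2$, whose universal cover is the standard double cover $SU(2)\to SO(3)$. Pulling $\omega$ back along this $\Z/2$-cover of a tubular neighborhood $U$ of $\Sigma_c$ produces an equivariant symplectic manifold $(\widetilde U,\widetilde\omega)$; Weinstein's symplectic neighborhood theorem, applied equivariantly, then identifies $(\widetilde U,\widetilde\omega)$ with an $\iota$-invariant open neighborhood in $(\C^2,\omega_0)$, with $\iota$ as deck transformation and $N_c$ corresponding to the lift of $\Sigma_c$.

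Strict convexity of $N_c$ for small $c$ is verified by analysing the leading-order behaviour near $Z$: in a suitable Darboux chart, the magnetic Hamiltonian linearises to a two-frequency harmonic oscillator on $(\C^2,\omega_0)$ (the constant-magnetic-field limit), whose energy levels are round ellipsoids. Hence as $c\to 0^+$, $N_c$ is a $C^\infty$-small perturbation of such an ellipsoid and is therefore strictly convex. Applying the Hofer--Wysocki--Zehnder theorem then yields the desired tightness and dynamical convexity of $\lambda_0|_{N_c}=p_c^*\tau_c$.

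The main obstacle is the equivariant symplectic matching: one must execute Weinstein's normal form and the subsequent Moser interpolation in the $\Z/2$-equivariant category, so that the deck transformation of the constructed $\C^2$-model is exactly $\iota$. This is handled by averaging Liouville primitives under the involution, and the compactness of $Z$ yields a positive threshold $c_0$ below which the identification remains valid on a neighborhood containing $N_c$. The remaining steps---convexity for small $c$, the descent formulas for $\tau_c$, and the appeal to Hofer--Wysocki--Zehnder---are then essentially formal.
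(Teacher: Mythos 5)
Your topological picture is right (double cover $S^3\to\Sigma_c$, embed in $\C^2$, apply Hofer--Wysocki--Zehnder), but two of the analytic steps you lean on would fail as stated. The appeal to ``Weinstein's symplectic neighborhood theorem, applied equivariantly'' does not produce the $\C^2$ model: the version for symplectic submanifolds applied to the zero section $Z$ includes $Z$ in the normal form, and no closed symplectic $2$-sphere embeds in the exact manifold $(\C^2,\omega_{\op{st}})$; the version for contact hypersurfaces presupposes that the contact structure on $\Sigma_c$ (or its lift) is already known to be standard, which is part of what must be proved; and a Moser argument on the punctured cover $\widetilde{U\setminus Z}$ only gives a symplectomorphism on some unspecified subneighborhood, with no a priori reason it contains the lift $N_c$ of $\Sigma_c$, precisely because $\Sigma_c$ accumulates on the removed zero section as $c\to 0^+$. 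The paper instead constructs the cover $p_0:S^3\to SS^2_0$ explicitly through quaternion multiplication and verifies by hand (Proposition \ref{stalif}) that $p_0^*\psi_0=-2\upsilon^*\lambda_{\op{st}}$, then transports to a general $(g,\sigma)$ using Weinstein's classification of $S^1$-connection forms with a prescribed curvature integral (Corollary \ref{corconb}) together with Gray stability in the parameter $m$ (Lemma \ref{lemconb}); none of this is available through a generic Moser/Weinstein argument.

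The convexity claim via a ``two-frequency harmonic oscillator'' linearisation is also incorrect. In a Darboux chart for the twisted form $d\alpha-\pi^*\sigma$ near a point of the zero section, the level sets of $E=\tfrac12|v|^2$ are cylinders $\R^2\times S^1_{\sqrt{2c}}$, not ellipsoids, and the global $\Sigma_c$ collapses to the $2$-dimensional zero section as $c\to 0^+$, not to a point. The round sphere of radius $\sqrt2$ that appears in Proposition \ref{convi} is not a linearisation at an equilibrium: it is the image of $S^3$ under the quaternionic cover realising the Boothby--Wang form $\psi_0$ at $m=0$, and the star-shaped embedding $\upsilon_{\sqrt\rho}$ is entirely dictated by the positive multiplier $\rho$ in $p_{m,\beta}^*\tau^{m,\beta}=-\rho\,\upsilon^*\lambda_{\op{st}}$, a piece of data that your proposed identification does not control. (The paper also gives an independent second proof that bypasses the embedding and estimates the Conley--Zehnder index directly; your proposal does not touch this route.)
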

\begin{rmk}
As we will observe later, $\tau_c$ is tight and dynamically convex if and only if $p_c^*\tau_c$ is. Hence, the statement does not depend on the choice of $p_c$. We also notice that this result is analogous to the dynamical convexity of geodesic flows of suitably pinched Finsler metrics on $S^2$, proved in \cite{pathar}. In the magnetic case low kinetic energy plays the same role as the pinching condition in the Finsler case when it comes to estimating the linearization of the flow. Further interplay between contact and Finsler geometry has been explored in \cite{hrysal1}.
\end{rmk}
We give two independent proofs of Proposition \ref{corfinalo}. The first one constructs a $p_c$ such that $p_c^*\tau_c$ is the contact form induced by a convex embedding of $S^3$ in $\C^2$. Then, one uses that the convexity of the hypersurface implies the dynamical convexity of the system \cite[Theorem 3.7]{hwz1}.

The second one does not construct an explicit $p_c$ but proves directly that $\tau_c$ is dynamically convex. This method has two advantages. First, the quantitative estimates we get will be related to the geometry of $(S^2,g,\sigma)$ in a more transparent way. Second, we can adapt the computations on the Conley-Zehnder index, needed to show dynamical convexity, to the case of a surface $M$ with genus bigger than one as follows.

Let $\Sigma_c$ be a sufficiently low energy level of a symplectic magnetic system on such $M$. Then, as happens for the $2$-sphere, $\Sigma_c$ is of contact type and the associated contact structure is homotopically trivial. Therefore, we can define the Conley-Zehnder index $\mu_{\op{CZ}}$ of a null-homologous periodic orbit $\gamma$ on $\Sigma_c$. As we explain in Remark \ref{gengen}, one can show that
\begin{equation}\label{gendyn}
 \mu_{\op{CZ}}(\gamma)\leq 2\chi(M)+1.
\end{equation}
Macarini and Abreu recently presented at the \textit{``Workshop on conservative dynamics and symplectic geometry'' (Rio de Janeiro, 2013)} a generalization of the notion of dynamical convexity, which is relevant to this setting. Thanks to Inequality \eqref{gendyn} they can show that symplectic magnetic systems on orientable surfaces of genus bigger than one are dynamically convex (in this broader sense) on low energy levels. We refer the reader to their forthcoming work for further details and applications of this generalized notion to dynamics.

Coming back to the case of the $2$-sphere, by \cite{hwz1} dynamical convexity yields the existence of a disk-like surface of section for the lifted flow. We remark, moreover, that this result holds even if the contact form is nondegenerate. At this point, one could ask if the projection of the open disk to $\Sigma_c$ is still an embedding. If this was the case, we would have a well-defined Poincar\'e return map for the original system, and we could apply known results for area-preserving maps \cite{bro,fra2,fra3} to obtain directly Theorem \ref{mainthm} below. Actually, Proposition 1.8 in \cite{hls} guarantees that there is a choice of a disk-like surface of section for the lifted flow with this property provided the contact form is \textit{nondegenerate}. To deal with the general case, we have to use the following proposition, which still enables us to transfer some dynamical information from the lifted flow to the original one. Its proof uses Lemma \ref{lemlin} about the linking number of closed orbits of 
the lifted flow.
\begin{prp}\label{covthe}
Let $N$ be diffeomorphic to $\mathbb R\mathbb P^3$ and consider a double cover $p:S^3\rightarrow N$. Let $Z\in\Gamma(N)$ be a vector field preserving some volume form on $N$ and let $\widehat{Z}\in\Gamma(S^3)$ be its lift. If $\widehat{Z}$ has a disk-like surface of section, then $Z$ has either two or infinitely many periodic orbits. The second alternative holds if, in addition, $Z$ has a prime contractible periodic orbit.

Let $\tau\in\Omega^1(N)$ be a contact form such that $p^*\tau$ is tight and dynamically convex. Then, the hypotheses of the proposition are satisfied taking as $Z$ the Reeb vector field of $\tau$.  
\end{prp}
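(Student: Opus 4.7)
The plan is to reduce the problem to the Poincar\'e return map $\phi$ of $\widehat Z$ on the open disk $\op{int}(D)$. Since $\widehat Z$ preserves the pullback of any $Z$-invariant volume form, $\phi$ preserves the induced area form, and one-point compactification yields an area-preserving homeomorphism $\bar\phi$ of $S^2$ fixing the point at infinity, which corresponds to the binding $\partial D$. Franks' dichotomy for such maps then forces $\bar\phi$ to have either exactly two or infinitely many periodic points. Translating back: $\widehat Z$ has either infinitely many periodic orbits, or exactly two --- the binding $\partial D$ together with a single other orbit $\widehat\gamma_2$ which crosses $D$ in one point, so that $\op{lk}(\partial D,\widehat\gamma_2)=1$.

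Next, I project through $p$ using the free, orientation-preserving involution $\sigma$. Since $\sigma$ commutes with $\widehat Z$, it permutes the set of its periodic orbits, and the numbers of orbits of $Z$ and of $\widehat Z$ differ by a bounded factor, so in the infinite case $Z$ inherits infinitely many orbits. In the two-orbit case, $\sigma$ either fixes each of $\partial D$ and $\widehat\gamma_2$, in which case they project to two distinct non-contractible orbits of $Z$, or swaps them, in which case they project to a single contractible orbit of $Z$. I rule out the swap sub-case by invoking Lemma \ref{lemlin}, which I expect to assert that $\op{lk}(\widehat\gamma,\sigma\widehat\gamma)\geq 2$ whenever $\sigma\widehat\gamma\neq\widehat\gamma$; this contradicts the linking number $1$ established above. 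Hence $Z$ always has either exactly two or infinitely many periodic orbits.

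The addendum on prime contractible orbits is then immediate. Such a $\gamma$ lifts to a disjoint $\sigma$-swapped pair $\widehat\gamma\cup\sigma\widehat\gamma$ of orbits of $\widehat Z$; if $\widehat Z$ had only two periodic orbits, they would have to be precisely this swapped pair --- the forbidden configuration above. Hence $\widehat Z$, and therefore $Z$, has infinitely many periodic orbits. For the Reeb specialization, the Reeb field $R$ of $\tau$ preserves the volume form $\tau\wedge d\tau$ on $N$, and its lift is the Reeb field of $p^*\tau$; since $p^*\tau$ is tight and dynamically convex, the Hofer-Wysocki-Zehnder theorem supplies the required disk-like surface of section (applicable even in the possibly degenerate case, as remarked earlier), so the hypotheses of the first part are satisfied.

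The principal difficulty is Lemma \ref{lemlin} itself, on which the whole dichotomy rests. I would attack it by choosing a Seifert surface $\Sigma$ for $\widehat\gamma$ in $S^3$, noting that $\Sigma\cup\sigma\Sigma$ descends to a $2$-chain in $\mathbb R\mathbb P^3$ with boundary $2\gamma$, and combining a parity argument from the $\mathbb Z/2$ symmetry with the positive intersection of $\sigma\widehat\gamma$ with $D$ to promote the naive lower bound of $1$ on the linking number to $2$.
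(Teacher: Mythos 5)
Your argument follows the paper's route almost step for step: invoke the Brouwer--Franks dichotomy (packaged in the paper as Corollary \ref{hwzcor}) to get either infinitely many periodic orbits of $\widehat Z$ or exactly two forming a Hopf link, then use Lemma \ref{lemlin} to show that a Hopf link cannot be an antipodal pair, which simultaneously shows the two orbits project to two distinct orbits of $Z$ and forces infinitely many orbits once a prime contractible orbit of $Z$ exists; the Reeb specialization via Theorem \ref{hwzthm} and $\widehat{R^\tau}=R^{p^*\tau}$ is also identical. The only point to correct is your guess at the content of Lemma \ref{lemlin}: it asserts that $\op{lk}(\widehat\gamma,A\widehat\gamma)$ is \emph{even}, not that it is $\geq 2$ --- in fact the paper remarks that every even integer, including $0$, is realized by some antipodal pair, so the stronger bound is false and your proposed ``promote $1$ to $2$'' attack on it cannot succeed. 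Evenness is all that is needed (one merely contrasts it with $\lvert\op{lk}\rvert=1$ for the Hopf link), and the paper proves it by pushing a Seifert surface for $\widehat\gamma$ transversely into $B^4$, taking its $A$-image as a surface for $A\widehat\gamma$, and observing that intersection points come in antipodal pairs with equal signs since $A$ preserves orientation.
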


Combining together Proposition \ref{corfinalo} and Proposition \ref{covthe} we obtain the main result of this paper.
\begin{thm}\label{mainthm}
Let $\sigma$ be a symplectic magnetic form on the Riemannian manifold $(S^2,g)$. Then, if the energy $c$ is low enough, the flow of $X^{g,\sigma}\big|_{\Sigma_c}$ has either two or infinitely many periodic orbits. In particular the second alternative holds if there exists a prime contractible periodic solution on $\Sigma_c$.
\end{thm}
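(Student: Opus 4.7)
The plan is to obtain Theorem~\ref{mainthm} by directly composing Proposition~\ref{corfinalo} and Proposition~\ref{covthe}: once these are in hand the proof becomes a short assembly of their outputs.

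\emph{Topological set-up.} For every $c>0$ the hypersurface $\Sigma_c \subset TS^2$ is the sphere bundle of radius $\sqrt{2c}$ over $S^2$, hence diffeomorphic to the unit tangent bundle of $S^2$, and in turn to $\mathbb R\mathbb P^3$. Choosing $c$ small enough, Proposition~\ref{corfinalo} supplies a contact form $\tau_c$ on $\Sigma_c$ with $d\tau_c = (d\alpha - \pi^*\sigma)\big|_{\Sigma_c}$ together with a double cover $p_c: S^3 \to \Sigma_c$ such that $p_c^*\tau_c$ is tight and dynamically convex. This is exactly the input required by the Reeb-flow case of Proposition~\ref{covthe} with $N = \Sigma_c$, $p = p_c$ and $\tau = \tau_c$.

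\emph{Matching the two dynamics.} Because $d\tau_c$ is the restriction of the magnetic symplectic form, its one-dimensional kernel inside $T\Sigma_c$ is the characteristic line field, which is spanned by $X^{g,\sigma}\big|_{\Sigma_c}$. Hence the Reeb field $R_{\tau_c}$ and $X^{g,\sigma}\big|_{\Sigma_c}$ are everywhere pointwise proportional, with a nowhere vanishing proportionality factor (otherwise $X^{g,\sigma}$ would lie in $\ker \tau_c$ at some point, which is incompatible with the contact-type condition). The two flows therefore share the same unparametrized orbits; in particular their prime periodic orbits are in bijection, and the bijection preserves contractibility in $\Sigma_c \cong \mathbb R\mathbb P^3$.

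\emph{Conclusion.} Applying Proposition~\ref{covthe} then gives that the Reeb flow of $\tau_c$, and therefore the magnetic flow on $\Sigma_c$, has either exactly two or infinitely many prime periodic orbits, the second alternative being forced as soon as one of them is contractible. Transferring this dichotomy back to $X^{g,\sigma}\big|_{\Sigma_c}$ via the bijection above proves Theorem~\ref{mainthm}. The hard work is not in this assembly but in the two building blocks: the disk-like surface-of-section analysis backing Proposition~\ref{covthe} and the dynamical-convexity estimates backing Proposition~\ref{corfinalo}. Once those are granted, the only nontrivial step in the theorem itself is the orbit identification above, which is routine.
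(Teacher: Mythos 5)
Your proposal is correct and follows exactly the route the paper takes: the paper also derives Theorem~\ref{mainthm} by simply combining Proposition~\ref{corfinalo} with Proposition~\ref{covthe}, with the implicit observation (made explicit by you) that $R^{\tau_c}$ and $X^{g,\sigma}\big|_{\Sigma_c}$ span the same line field and hence have identical unparametrized orbits. Your added detail on the orbit identification and preservation of contractibility is a faithful expansion of what the paper leaves unstated.
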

\begin{rmk}
Looking at the magnetic system given by the round metric and its volume form, we draw two conclusions. First, that a prime contractible orbit does not need to exist. Second, that there are dynamically convex magnetic systems with infinitely many closed orbits on every energy level. On the other hand, we do not know any instance of an energy level of a symplectic magnetic system with exactly $2$ periodic orbits. However, \cite[Theorem 1.3]{sch1} shows that there are examples with exactly $2$ periodic orbits, whose projection on $S^2$ is a simple curve.  
\end{rmk}
\subsection*{Acknowledgements}
I am grateful to my advisor Gabriel Paternain for many helpful discussions about the dynamics of magnetic field, for his invaluable support in preparing this paper and for having suggested to me, in the first place, that the results in \cite{pathar} could be generalized to this setting. I would like to thank Leonardo Macarini for having pointed out to me a mistake in Inequality \eqref{gendyn} contained in a previous extended version of this work. Finally, I am indebted to Marco Golla for the simple and nice proof about the parity of the linking number contained in Lemma \ref{lemlin} and for his genuine interest in my work.
\section{Preliminaries}\label{sec_pre}
In this section we set the notation and recall the prerequisites needed in the subsequent discussion. The first subsection  describes the conventions and symbols used in the paper. The second subsection is devoted to the basic properties of the tangent bundle of an oriented Riemannian $2$-sphere and of magnetic fields. Finally, the third subsection deals with exact Hamiltonian structures and their relationship with contact geometry.

\subsection{General notation}
Throughout the paper all objects are supposed to be smooth. If $M$ is a manifold we denote by $\Omega^k(M)$ the space of $k$-differential forms on $M$ and by $\Gamma(M)$ the space of vector fields on $M$. The interior product between $Z\in\Gamma(M)$ and $\omega\in\Omega^k(M)$ will be written as $\imath_Z\omega$. If $\omega\in\Omega^k(M)$, we denote by $\mathcal P^{\omega}$ the set of its primitives. Namely, $\mathcal P^{\omega}=\{\tau\in\Omega^{k-1}(M)\ |\ \omega=d\tau\}$.

If $Z\in\Gamma(M)$, we denote by $\mathcal L_Z$ the associated Lie derivative and by $\Phi^Z$ the flow of $Z$ defined on some subset of $\mathbb R\times M$. We write its time $t$ flow map as $\Phi^Z_t$. Moreover, we call $\Omega^k_{Z}(M)$ the space of $Z$-invariant $k$-forms. In other words $\tau$ belongs to $\Omega^k_Z(M)$ if and only if $\mathcal L_Z\tau=0$.

If $(M,\omega)$ is a symplectic manifold and $H:M\rightarrow \R$ is a real function, the Hamiltonian vector field $X_H$ is defined by $\imath_{X_H}\omega=-dH$.

We define now some objects on $\C^n\simeq \R^{2n}$. Denote by $J_{\op{st}}$ the \textit{standard complex structure} and by $g_{\op{st}}$ the \textit{Euclidean inner product}. Define the \textit{standard Liouville form} $\lambda_{\op{st}}\in\Omega^1(\C^n)$ as $(\lambda_{\op{st}})_z(W):=g_{\op{st}}(J_{\op{st}}(z),W)$, for $z\in\C^n$ and $W\in T_z\C^n\simeq \C^n$. Finally, the \textit{standard symplectic form} is defined as $\omega_{\op{st}}:=\frac{1}{2}d\lambda_{\op{st}}$, or in standard real coordinates $\omega_{\op{st}}=\sum_idx^i\wedge dy^i$.

If $\gamma$ and $\gamma'$ are two knots in $S^3$ we denote by $\op{lk}(\gamma,\gamma')$ their linking number.

\subsection{The geometry of an oriented Riemannian 2-sphere}\label{sub_geo}
Let $\pi:TS^2\rightarrow S^2$ be the tangent bundle of $S^2$ and let $g$ be a Riemannian metric on it. This yields a duality isomorphism $\flat:TS^2\rightarrow T^*S^2$ which we use to push forward the metric for tangent vectors to a dual metric $g$ for $1$-forms. We write $\vert\cdot\vert$ for the induced norms on each $T_xS^2$ and $T^*_xS^2$. From the duality construction we have the identity $\vert l\vert=\sup_{\vert v\vert=1}|l(v)|$ for every $l\in T_x^*S^2$. We collect this family of norms together to get a supremum norm $\Vert\cdot\Vert$ for sections:
\begin{equation}\label{norms}
\forall Z\in \Gamma(S^2),\ \ \Vert Z\Vert:=\sup_{x\in S^2}\vert Z_x\vert;\quad\quad \forall \beta\in \Omega^1(S^2),\ \ \Vert \beta\Vert:=\sup_{x\in S^2}\vert \beta_x\vert.
\end{equation}
The Riemannian metric induces a \textit{kinetic energy function} $E:TS^2\rightarrow \R$ defined by $E\big((x,v)\big):=\frac{1}{2}g_x(v,v)$. The level sets $\Sigma_c:=\{E=c\}\subset TS^2$ are such that
\begin{itemize}
 \item the zero level $\Sigma_0$ is the zero section $\{(x,0)\,|\,x\in S^2\}$;
 \item for $c>0$, $\Sigma_c\rightarrow S^2$ is an $S^1$-bundle with total space diffeomorphic to $\R\mathbb P^3$.
\end{itemize}

Consider $\nabla$ the Levi Civita connection of $g$. Let $\frac{\nabla^\gamma}{dt}$ be the associated covariant derivative along a curve $\gamma$ on $S^2$ and denote by $K$ the Gaussian curvature of $g$. For every $(x,v)\in TS^2$, $\nabla$ induces a horizontal lift $L^{\mathcal H}_{(x,v)}:T_xS^2\rightarrow T_{(x,v)}TS^2$. It has the property that $d_{(x,v)}\pi\circ L^{\mathcal H}_{(x,v)}=\op{Id}_{T_xS^2}$. The geodesic equation $\frac{\nabla^\gamma}{dt}\dot{\gamma}=0$ for curves $\gamma$ in $S^2$ gives rise to a flow on $TS^2$. The associated vector field $X\in \Gamma(TS^2)$ is called the \textit{geodesic vector field} and can be equivalently defined as $X_{(x,v)}=L^{\mathcal H}_{(x,v)}(v)$.

Suppose that we also have fixed an orientation $\mathfrak o$ on $S^2$. Then, we combine it with the Riemannian metric $g$ to define the positive Riemannian volume form $\mu\in\Omega^2(S^2)$ and the $2\pi$-periodic flow $\Phi^V_\varphi:TS^2\rightarrow TS^2$, which rotates every fiber by an angle $\varphi$. If we denote by $\jmath$ the rotation of $\pi/2$, then the generator $V$ of this flow at $(x,v)$ is the verical lift of $\jmath_x(v)$. Any orbit of $\Phi^V$ is closed and its support is $\Sigma_c\cap T_xS^2$, for some $x\in S^2$. In particular, $\Phi^V$ leaves every energy set $\Sigma_c$ invariant and, hence, $V\big|_{\Sigma_c}\in \Gamma(\Sigma_c)$.

Finally, we use $\jmath$ and the horizontal lift to add a last distinguished vector field $H\in\Gamma(TS^2)$. It is defined as $H_{(x,v)}:=L_{(x,v)}^{\mathcal H}(\jmath_x (v))$.

Take now a $\sigma\in\Omega^2(S^2)$ and construct the symplectic $2$-form $\omega_\sigma:=d\alpha-\pi^*\sigma\in\Omega^2(TS^2)$, where $\alpha$ is the pull-back of the \textit{standard Liouville form} on $T^*S^2$ via $\flat$. We call $\sigma$ a \textit{magnetic form} and the triple $(S^2,g,\sigma)$ a \textit{magnetic system}. There exists a unique real function $f:S^2\rightarrow \R$ called the \textit{magnetic strength} such that $\sigma=f\mu$. In the following discussion, we also suppose that the magnetic system is \textit{symplectic}, namely that $\sigma$ is a symplectic form. By inverting the orientation, we can assume that the magnetic form is positive with respect to $\mathfrak o$. 

We define the \textit{magnetic vector field} $X^{g,\sigma}\in\Gamma(TS^2)$ as the $\omega_\sigma$-Hamiltonian vector field associated to $E$ and we refer to $\Phi^{X^{g,\sigma}}$ as the \textit{magnetic flow}. As the geodesic vector field, $X^{g,\sigma}$ comes from a second order ODE for curves in $S^2$: 
\begin{equation}\label{lorsig}
\frac{\nabla^\gamma}{dt}\dot\gamma=F_\gamma\big(\dot{\gamma}\big).
\end{equation}
Here $F:TM\rightarrow TM$ is the \textit{Lorentz force} of the magnetic system. It is a bundle map given by $g(F_x(v),w):=\sigma_x(v,w)$ and can be expressed using the magnetic strength as $F_x(v)=f(x)\jmath_x(v)$. Using the relation between the Levi Civita connection and the horizontal lifts, one finds that $X^{g,\sigma}=X+fV$.

Since $E$ is a integral of motion for the magnetic flow, $\Sigma_0$ is the set of rest points for $X^{g,\sigma}$ and, for $c>0$, $X^{g,\sigma}$ restricts to a nowhere vanishing vector field on $\Sigma_c$.

Scalar multiplication along the fibers $(x,v)\mapsto(x,\frac{v}{\sqrt{2c}})$ sends $\Sigma_c$ to $SS^2:=\Sigma_{1/2}$. The push-forward of $X^{g,\sigma}\big|_{\Sigma_c}$ with respect to this diffeomorphism is $\sqrt{2c}X+fV$. Thus, the dynamics of the magnetic flow is encoded by the $1$-parameter family of vector fields $X^m:=\big(mX+fV\big)\big|_{SS^2}\in\Gamma(SS^2)$, where the relation between the parameters $m$ and $c$ is given by $m(c)=\sqrt{2c}$. For every $m$ we define the \textit{magnetic curvature} function $K_m:SS^2\rightarrow\R$ as $K_m:=m^2K-m(df\circ\jmath)+f$.

From now on we also assume that $\sigma$ (or equivalently $f$) is rescaled by a positive constant in such a way that
\begin{equation}\label{normi}
 \int_{S^2}\sigma=4\pi.
\end{equation}
This operation will only induce a corresponding rescaling of the parameter $m$ and, hence, will not affect our study.

From the discussion above, we see that we can restrict our attention to the geometry of $SS^2$. Here we have a canonical frame, given by $(X,V,H)\big|_{SS^2}$, which induces the dual coframe $(\alpha,\psi,\eta)$. We have the following three bracket relations and dual differential relations:
\begin{equation}\label{bra}
\left\{\begin{array}{ccc}
d\alpha=\psi\wedge\eta,& d\psi=K\eta\wedge\alpha=-K\pi^*\mu,& d\eta=\alpha\wedge\psi.\\
\mbox{}[V,X]=H,&[H,V]=X,&[X,H]=KV. 
\end{array}\right.
\end{equation}

The frame also yields a volume form inducing an orientation $\mathfrak O_{SS^2}$. It is called the \textit{Liouville volume form} $\nu\in\Omega^3(SS^2)$ and it is defined as $\nu:=\alpha\wedge\psi\wedge\eta$. The orientation $\mathfrak O_{SS^2}$ is obtained from the one on $TS^2$ induced by $\omega_\sigma$ following the convention of putting the outward normal to $SS^2$ first.

It is easy to define a \textit{$C^0$-topology} and a \textit{$C^1$-topology} for elements $Z$ of $\Gamma(SS^2)$. The former is given by the uniform convergence of the three functions $\alpha(Z)$, $\psi(Z)$ and $\eta(Z)$. The latter also requires the uniform convergence of the derivatives of these functions along $X$, $V$ and $H$. With this definition, the Lie bracket is a continuous map from $\Gamma(SS^2)\times\Gamma(SS^2)$ endowed with the product $C^1$-topology and $\Gamma(SS^2)$ endowed with the $C^0$-topology.

Pulling back $\omega_\sigma$ on $SS^2$ using multiplication along the fibers again, we also get the family of nowhere vanishing closed $2$-forms $\omega_m:=md\alpha-\pi^*\sigma\in\Omega^2(SS^2)$. These are instances of what is generally called an \textit{Exact Hamiltonian Structure} (or EHS for brevity), which we describe in the next subsection.

\subsection{Exact Hamiltonian structures}
Let $(N,\mathfrak O)$ be a three-manifold $N$ endowed with an orientation $\mathfrak O$.
\begin{dfn}
A closed and nowhere vanishing $2$-form on $(N,\mathfrak O)$ is called a \textit{Hamiltonian Structure}. If the form is further assumed to be exact, it is called an \textit{Exact Hamiltonian Structure}. 
\end{dfn}
Every EHS $\omega$ yields the one-dimensional oriented foliation $\ker \omega$. In general one is interested to study its \textit{qualitative dynamics}, namely the flow of any positive section of $\ker \omega$, up to time reparametrization. Many results on the dynamics depend on finding a special primitive for $\omega$.
\begin{dfn}
We say that $\omega$ is \textit{of contact type} if there exists a contact form $\tau\in\mathcal P^\omega$. If we denote by $R^\tau$ the \textit{Reeb vector field} of $\tau$, then one among $R^\tau$ and $-R^\tau$ is a positive section of $\ker\omega$. We say that $\omega$ is of \textit{positive} or \textit{negative} contact type accordingly. 
\end{dfn}
\begin{rmk}\label{conpro}
If we fix any positive section $Z$ of $\ker\omega$, being of positive (respectively negative) contact type is equivalent to finding $\tau\in\mathcal P^\omega$ such that $\tau(Z):N\rightarrow\R$ is a positive (respectively negative) function. In this case $R^{\tau}=\frac{Z}{\tau(Z)}$.
\end{rmk}
The most direct way to detect the contact property is to use Remark \ref{conpro}. However, this method could be difficult to apply, especially if we want to prove that an EHS is not of contact type, since we should check that every function $\tau(Z)$ vanishes at some point. This problem is overcome by the following necessary and sufficient criterion contained in McDuff \cite{mcd}.
\begin{prp}\label{mcdcri}
Let $\omega$ be an EHS and $Z$ be a positive section of $\ker\omega$. Then, $\omega$ is of positive (respectively negative) contact type if and only if the action of every null-homologous $Z$-invariant measure is positive (respectively negative).
\end{prp}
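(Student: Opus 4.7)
The plan is to prove both implications, with the forward direction being essentially tautological and the converse relying on a Hahn--Banach separation argument in $C(N)$.

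First I note that for any null-homologous $Z$-invariant measure $m$, the integral $A(m):=\int_N\tau(Z)\,dm$ is independent of the choice of primitive $\tau\in\mathcal P^\omega$: two primitives differ by a closed $1$-form $\beta$, and $\int_N\beta(Z)\,dm=0$ because the Schwartzman asymptotic cycle of $m$ vanishes (the special case $\beta=d\phi$ recovers the invariance identity $\int_N Z(\phi)\,dm=0$). The easy direction then follows directly from Remark \ref{conpro}: if $\omega$ is of positive contact type, one can pick $\tau\in\mathcal P^\omega$ with $\tau(Z)>0$ everywhere, so $A(m)>0$ for any nonzero nonnegative null-homologous $Z$-invariant measure.

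For the converse, suppose $A(m)>0$ for every such nontrivial $m$. Fix a primitive $\tau_0\in\mathcal P^\omega$ and set $h_0:=\tau_0(Z)\in C(N)$. By Remark \ref{conpro} it suffices to produce a closed $1$-form $\beta$ with $h_0+\beta(Z)>0$ pointwise, since then $\tau_0+\beta\in\mathcal P^\omega$ is a contact primitive whose Reeb vector field is a positive multiple of $Z$. Consider the linear subspace
\[
W:=\{\beta(Z)\,:\,\beta\in\Omega^1(N),\ d\beta=0\}\ \subset\ C(N),
\]
and the open convex cone $C^\circ_+:=\{f\in C(N)\,:\,f>0\}$ inside the Banach space $(C(N),\|\cdot\|_\infty)$. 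The task is to show that the affine set $h_0+W$ meets $C^\circ_+$. If it did not, Hahn--Banach would produce a nonzero continuous linear functional $\Lambda$ on $C(N)$ and a constant $c\in\R$ with $\Lambda(f)<c\leq\Lambda(h_0+u)$ for all $f\in C^\circ_+$ and $u\in W$. Since $\Lambda$ is bounded below on the affine subspace $h_0+W$ it must be constant there, forcing $\Lambda|_W=0$ and $\Lambda(h_0)=c$; a rescaling argument on the open cone $C^\circ_+$ (applying $\Lambda(\lambda f)<c$ with $\lambda\to\infty$ for $f\geq 0$) then gives $\Lambda\leq 0$ on all nonnegative functions, so by the Riesz representation theorem $\mu:=-\Lambda$ is a nonzero nonnegative Radon measure on $N$.

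The relation $\mu|_W=0$ encodes both required properties of $\mu$ simultaneously: specialising to $\beta=d\phi$ yields $\int Z(\phi)\,d\mu=0$, i.e.\ $Z$-invariance, while general closed $\beta$ yields vanishing of the Schwartzman asymptotic cycle, i.e.\ null-homology. Thus $\mu$ is a nontrivial null-homologous $Z$-invariant measure with $A(\mu)=\int_N h_0\,d\mu=-c\leq 0$, contradicting the hypothesis. The negative contact-type case is obtained verbatim by swapping signs. The main delicate step I anticipate is verifying cleanly that the functional delivered by Hahn--Banach is represented by an \emph{honest, positive, nonzero} Radon measure -- this is standard functional analysis but needs the open cone $C^\circ_+$ to be used with the right scaling, and the nontriviality of $\mu$ must be traced back to the nontriviality of $\Lambda$ in order to legitimately invoke the standing positivity hypothesis.
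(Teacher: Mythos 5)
The paper does not actually prove this proposition; it is quoted as a criterion of McDuff with a citation to \cite{mcd}, so there is no in-text argument to compare against. That said, your Hahn--Banach separation argument is the standard (Sullivan-style) route to such a duality statement and is, to my knowledge, essentially how McDuff's criterion is proved, so you are reconstructing the intended proof rather than inventing a new one. The overall structure is sound: the forward direction really is immediate from Remark \ref{conpro}, and the converse correctly identifies the subspace $W=\{\beta(Z):d\beta=0\}$ so that $\mu|_W=0$ simultaneously yields $Z$-invariance (exact $\beta$) and null-homology (general closed $\beta$). Two small points deserve a word of care when you write this up. First, the rescaling step ``$\Lambda(\lambda f)<c$ for $f\ge 0$, $\lambda\to\infty$'' is stated for $f\ge 0$, which only lies on the boundary of $C^\circ_+$; you should pass through $\lambda(f+\varepsilon)\in C^\circ_+$ and then send $\varepsilon\to 0$ before $\lambda\to\infty$. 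Second, the paper's invariant measures are \emph{probability} measures, so the nonzero positive Radon measure $\mu=-\Lambda$ must be normalised before it can be fed back into the hypothesis; since the sign of the action is scale-invariant this is harmless, but it should be said. You should also note explicitly, as you implicitly do, that $c\ge 0$ follows from $\sup_{C^\circ_+}\Lambda=0$, so that $\mathcal A(\mu)=-c\le 0$ indeed contradicts strict positivity. With those fix-ups the argument is complete and correct.
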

We recall that a $Z$-invariant measure $\zeta$, is a Borel probability measure on $N$, such that
\begin{equation}
\forall\, h:N\rightarrow\R,\quad \int_Ndh(Z)\zeta=0.
\end{equation}
We associate to $\zeta$ an element $\rho(\zeta)$ in $H^1(N,\R)^*=H_1(N,\R)$ defined as
\begin{equation}
\forall\, [\beta]\in H^1(N,\R),\quad <\rho(\zeta),[\beta]>:=\int_N\beta(Z)\zeta.
\end{equation}
Suppose $Z$ is a positive section of $\ker\omega$, with $\omega$ an EHS, $\zeta$ is null-homologous (namely $\rho(\zeta)=0$) and $\tau\in\mathcal P^\omega$. We define the \textit{action} of $\zeta$, which does not depend on $\tau$, as
\begin{equation}
\mathcal A(\zeta):=\int_N\tau(Z)\zeta.
\end{equation}

The importance of being of contact type relies on the fact that we can use results for Reeb flows to understand the qualitative dynamics of an EHS. Besides the solution of the Weinstein conjecture in dimension $3$, which we discussed briefly in the introduction, we are interested in the work of Hofer, Wysocki and Zehnder \cite{hwz1}. To state it we need to define the Conley-Zehnder index of a closed Reeb orbit. We refer to \cite{hofkri} for the proofs and further details.

\subsection{Reeb vector fields and the Conley-Zehnder index}\label{sub_czi}
We start with the definition of the index for a path with values in $\op{Sp}(1)$, the group of $2\times 2$-symplectic matrices. For any $T>0$, we set $\op{Sp}_T(1):=\{\Psi:[0,T]\rightarrow \operatorname{Sp}(1)\ |\ \Psi(0)=\operatorname{Id}\}$. We call $\Psi\in \op{Sp}_T(1)$ \textit{non-degenerate} if $\Psi(T)$ does not have $1$ as eigenvalue.

Given $\Psi\in\op{Sp}_T(1)$, we associate to every $u\in\mathbb R^2\setminus \{0\}$ a \textit{winding number} $\Delta\theta(\Psi,u)$ as follows. Identify $\R^2$ with $\C$ and let
\begin{equation}\label{teta}
\frac{\Psi(t)u}{|\Psi(t)u|}=e^{i\theta^\Psi_u(t)},
\end{equation}
for some function $\theta^\Psi_u:[0,T]\rightarrow\R$. We define $\Delta\theta(\Psi,u):=\theta^\Psi_u(T)-\theta^\Psi_u(0)$. Let
\begin{equation}
I(\Psi):=\left\{\frac{1}{2\pi}\Delta\theta(\Psi,u)\ \Big|\ u\in\R^2\setminus \{0\}\right\}.
\end{equation}
The interval $I(\Psi)$ is closed and its length is strictly less than $1/2$. We notice that the set $e^{2\pi i I(\Psi)}\subset S^1$ is completely determined by the endpoint $\Psi(T)$. In particular, we see that $\Psi$ is nondegenerate if and only if $\mathbb Z\cap \partial I(\Psi)=\emptyset$. We define the Conley-Zehnder index for the non-degenerate case as
\begin{equation}
\mu_{\op{CZ}}(\Psi):=\left\{\begin{array}{cc}
 2k,&\mbox{if }k\in I(\Psi),\mbox{ for some }k\in\Z;\\
2k+1,&\mbox{if }I(\Psi)\subset(k,k+1),\mbox{ for some }k\in\Z.
\end{array}\right.
\end{equation}
Then, we extend the definition to the degenerate case by taking the maximal lower semicontinuous extension. This amounts to using the same recipe as in the non-degenerate case, but for an interval $I(\Psi)-\varepsilon$, shifted to the left by an arbitrary small amount. With this definition we have that, for any $k\in\Z$,  
\begin{equation}\label{rmkdyn}
I(\Psi)\subset(k,+\infty)\quad\iff \quad \mu_{\op{CZ}}(\Psi)\geq 2k+1.
\end{equation}

We move now to describe the Conley-Zehnder index of a closed Reeb orbit. Suppose we have a three-manifold $N$ with a contact form $\tau$ with induced contact structure $\xi:=\ker\tau$ and Reeb vector field $R^\tau$. Assume that the first Chern class $c_1(\xi)\in H^2(N,\mathbb Z)$ vanishes on $\pi_2(N)$. Let $\gamma:\R/T\Z\rightarrow N$ be a contractible periodic orbit of $R^\tau$, which bounds a disk $i:D\rightarrow N$. Let $(\epsilon^2_D,\omega_{\op{st}})$ be the trivial symplectic bundle of rank $2$ on $D$ and let $\chi:(i^*\xi,i^*d\tau)\rightarrow(\epsilon^2_D,\omega_{\op{st}})$ be a $d\tau$-symplectic trivialization of $i^*\xi$ on $D$. Since $\xi$ is invariant under the flow of $R^\tau$, we can form the path of symplectic matrices $\Psi^{D,\chi}_\gamma\in\op{Sp}_T(1)$ given by
\begin{equation}
\Psi^{D,\chi}_\gamma(t):=\chi_{\gamma(t)}\circ d_{\gamma(0)}\Phi^{R^\tau}_t\circ \chi^{-1}_{\gamma(0)}\in \op{Sp}(1).
\end{equation}
\begin{dfn}
The \textit{Conley-Zehnder index} of $\gamma$ is defined as $\mu_{\op{CZ}}(\gamma):=\mu_{\op{CZ}}(\Psi^{D,\chi}_\gamma)$. The hypothesis on the Chern class ensures that this number does not depend on the pair $(D,\chi)$.

A contact form $\tau$ on $N$ such that $c_1(\ker\tau)\big|_{\pi_2(N)}=0$ is said to be \textit{dynamically convex} if, for every contractible periodic Reeb orbit $\gamma$, $\mu_{\op{CZ}}(\gamma)\geq3$.
\end{dfn}
The main result in \cite{hwz1} can be stated as follows.
\begin{thm}[Hofer, Wysocki and Zehnder, 1998]\label{hwzthm}
The Reeb flow of a tight dynamically convex contact form on $S^3$ admits a disk-like surface of section. 
\end{thm}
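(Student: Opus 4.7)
The strategy is to build the disk-like surface of section as the image in $S^3$ of a pseudoholomorphic plane in the symplectization of $(S^3,\tau)$ asymptotic at infinity to a distinguished Reeb orbit, which will serve as the binding. Work in the symplectization $(\R\times S^3,\,d(e^s\tau))$ equipped with an $\R$-invariant almost complex structure $J$ that sends $\partial_s$ to $R^\tau$ and restricts on $\ker\tau$ to a complex structure tamed by $d\tau$. By Hofer's asymptotic analysis, a finite-energy $J$-holomorphic plane $\tilde u=(a,u):\C\to\R\times S^3$ converges, up to reparametrization at $\infty$, to a periodic Reeb orbit; the projection $u$ is then a natural candidate for an open page, with boundary the asymptotic orbit.

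\emph{Existence of one plane.} First I would produce at least one such finite-energy plane. Two natural routes are available. Either approximate $(S^3,\tau)$ by a strictly convex embedding in $(\C^2,\omega_{\op{st}})$, where a Bishop family of holomorphic disks attached to the hypersurface can be constructed explicitly near an elliptic point and continued by a parametric Fredholm argument. Or, without such an embedding, initiate a Bishop family inside a weak filling and use Gromov--Hofer compactness together with the tightness hypothesis on $\tau$ — which prevents overtwisted-disk bubbles — to extract a finite-energy plane asymptotic to some simply covered periodic Reeb orbit $P_\ast$.

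\emph{Use of dynamical convexity.} For a somewhere injective finite-energy plane $\tilde u$ asymptotic to a simply covered orbit $P$, the Fredholm index satisfies $\op{ind}(\tilde u)=\mu_{\op{CZ}}(P)-1$, so dynamical convexity $\mu_{\op{CZ}}(P)\geq 3$ gives $\op{ind}(\tilde u)\geq 2$. Positivity of intersections in dimension four plus automatic transversality for planes with $\mu_{\op{CZ}}\geq 3$ then force $\tilde u$ to be an embedding and the projection $u:\C\to S^3\setminus P_\ast$ to be embedded and transverse to $R^\tau$ (indeed $u^\ast \tau\geq 0$ with equality only at branch points, which are ruled out). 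The same convexity input is the key in the compactness step: when a sequence of embedded planes degenerates, each bubble is a finite-energy plane or cylinder whose asymptotes have $\mu_{\op{CZ}}\geq 3$, and straightforward index accounting excludes every configuration except a single-plane limit, still asymptotic to $P_\ast$.

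\emph{From one plane to a surface of section.} With one embedded plane and an index-$2$ local moduli space, the implicit function theorem together with the compactness above yields a maximal connected one-parameter family $\{\tilde u_\lambda\}$ of embedded planes, all asymptotic to $P_\ast$. Positivity of intersections forces the pages $u_\lambda(\C)$ to be pairwise disjoint, transverse to $R^\tau$, and to exhaust $S^3\setminus P_\ast$ (otherwise the family could be extended further). Each page, together with $P_\ast$, is then a closed embedded disk bounded by $P_\ast$ whose interior is transverse to the Reeb flow; since the family foliates the complement of $P_\ast$, every Reeb trajectory meets each page in forward and backward finite time, giving the desired disk-like surface of section. The main obstacle throughout is the pseudoholomorphic compactness analysis — ruling out planes with $\mu_{\op{CZ}}\leq 2$ in the limit (handled by dynamical convexity), cylinder breaking separating the asymptote from $P_\ast$ (handled by index counting), and branched multiple covers (handled by automatic transversality) — which is the technical heart of the HWZ argument.
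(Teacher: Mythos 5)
Theorem~\ref{hwzthm} is not proved in this paper: the surrounding text explicitly introduces it as ``the main result in \cite{hwz1}'', so it is cited as a black box and there is no in-house argument to compare yours against. That said, your sketch is a recognizable outline of the actual Hofer--Wysocki--Zehnder proof, and most of the ingredients you name --- finite-energy planes in the symplectization, Bishop disk families, the index formula $\op{ind}(\tilde u)=\mu_{\op{CZ}}(P)-1$ for a plane, positivity of intersections and automatic transversality in dimension four, and an $\R$-reduced one-parameter family of pages --- are the correct ones.

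Two gaps are worth flagging. First, your proposed route ``approximate $(S^3,\tau)$ by a strictly convex embedding in $\C^2$'' is not available in general: if every tight dynamically convex contact form on $S^3$ arose as a limit of restrictions of $\lambda_{\op{st}}$ to convex hypersurfaces, dynamical convexity would be no more general than geometric convexity, and the content of \cite[Theorem 3.7]{hwz1} --- and of Proposition~\ref{convi} in this paper, which only produces such an embedding in a very special low-energy regime --- would evaporate. The Bishop family must be initiated in a filling of $(S^3,\ker\tau)$, whose existence is what tightness buys you via the uniqueness of the tight contact structure on $S^3$; it is not obtained by geometric convexification. Second, your foliation step quietly requires the binding orbit $P_\ast$ to have $\mu_{\op{CZ}}(P_\ast)=3$ on the nose, not merely $\geq 3$: only then is the $\R$-reduced moduli space of embedded planes asymptotic to $P_\ast$ one-dimensional, so that the pages fit together as a foliation of $S^3\setminus P_\ast$ rather than as a higher-dimensional, overlapping family. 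Dynamical convexity supplies only the lower bound; the matching upper bound $\mu_{\op{CZ}}(P_\ast)\leq 3$ (together with $P_\ast$ being unknotted with self-linking number $-1$) has to be extracted from the particular plane produced by the degenerating Bishop family, via adjunction and asymptotic-winding estimates inherited from the disks. Your write-up never closes this gap, and without it the dimension count for the family of pages fails and the open-book conclusion does not follow.
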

We recall that if $Z$ is a vector field on a closed $3$-manifold $N$, a \textit{global surface of section} for $Z$ is an embedded compact surface $i:S\hookrightarrow N$ such that
\begin{itemize}
 \item the boundary of $S$ is the union of supports of periodic orbits for $Z$;
 \item the vector field $Z$ is transverse to $\dot{S}:=S\setminus \partial S$;
 \item every flow line hits the surface in forward and backward time.
\end{itemize}
Under these hypotheses we can define a first return map $F_{\dot{S}}:\dot{S}\rightarrow\dot{S}$, whose discrete dynamics carries important information about the qualitative dynamics of $Z$. When $S$ is a disk, \cite[Proposition 5.4]{hwz1} implies that $F_{\dot{S}}$ is $C^0$-conjugated to a homeomorphism of the disk preserving the standard Lebesgue measure. Therefore, one can use the work of Brouwer (\cite{bro}) and Franks (\cite{fra2} and \cite{fra3}) on area-preserving homeomorphisms of planar domains and get the following corollary.
\begin{cor}\label{hwzcor}
Suppose $Z\in\Gamma(S^3)$ is a volume-preserving vector field having a disk-like surface of section. Then, $\Phi^Z$ has $2$ periodic orbits $\gamma_1$ and $\gamma_2$ which form a Hopf link (namely they are unknotted and the absolute value of their linking number is $1$). Either these are the only periodic orbits or there are infinitely many of them. In particular the second case holds if there exists a knotted periodic orbit or if there are two periodic orbits $\gamma$ and $\gamma'$ such that $|\op{lk}(\gamma,\gamma')|\neq 1$.

The hypotheses of the corollary are satisfied if we take $Z=R^\tau$, where $\tau$ is a tight dynamically convex contact form on $S^3$.
\end{cor}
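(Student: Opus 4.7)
The plan is to reduce the $3$-dimensional dynamics on $S^3$ to that of an area-preserving homeomorphism of an open disk, and then invoke two-dimensional surface-dynamics results of Brouwer and Franks. Throughout I take $S$ to be the given disk-like global surface of section.

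First, I would single out $\gamma_1 := \partial S$, which is a periodic orbit by the definition of a surface of section, and which is unknotted since it bounds the embedded disk $S \subset S^3$. By \cite[Proposition 5.4]{hwz1} the first return map $F_{\dot S} : \dot S \to \dot S$ is $C^0$-conjugate to a homeomorphism of the open disk preserving the standard Lebesgue measure. Brouwer's translation theorem \cite{bro} then yields a fixed point $p \in \dot S$, whose $\Phi^Z$-orbit is a second periodic orbit $\gamma_2$ that meets the disk $S$ transversely at the single point $p$. Transverse intersection at one point together with $\partial S = \gamma_1$ gives $|\op{lk}(\gamma_1,\gamma_2)| = 1$. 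The unknottedness of $\gamma_2$ should follow from the smooth isotopy of disks $\{\Phi^Z_t(S)\}_{t \in [0,T]}$, all with boundary $\gamma_1$, which sweeps the orbit $t \mapsto \Phi^Z_t(p)$ along a meridional core and thereby exhibits $\gamma_2$ as the core of the standard solid torus $S^3 \setminus \gamma_1$.

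Next, for the dichotomy, I would invoke Franks's theorems on area-preserving homeomorphisms of planar surfaces \cite{fra2,fra3}: either $F_{\dot S}$ has no periodic point other than $p$, or it has infinitely many. Translating back through the conjugacy gives that either $\{\gamma_1,\gamma_2\}$ is the entire set of periodic orbits of $\Phi^Z$, or there are infinitely many. Both special sufficient conditions for the second alternative then follow for free: a knotted periodic orbit cannot be $\gamma_1$ or $\gamma_2$ (both unknotted), so its existence produces a third orbit and hence infinitely many; likewise a pair of orbits with $|\op{lk}|\neq 1$ cannot both lie in $\{\gamma_1,\gamma_2\}$, again forcing infinitely many.

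For the final sentence of the corollary, a Reeb vector field $R^\tau$ preserves the volume form $\tau \wedge d\tau$, and Theorem \ref{hwzthm} supplies a disk-like surface of section for $R^\tau$ whenever $\tau$ is tight and dynamically convex, so both hypotheses of the corollary are met. The subtle step I expect to occupy the most care is the unknottedness of $\gamma_2$: the condition $|\op{lk}(\gamma_1,\gamma_2)| = 1$ only pins down the homology class of $\gamma_2$ inside the solid torus $S^3\setminus \gamma_1$, not its isotopy class, so without additional input (say, the moving-disk family above or an appeal to the explicit open-book picture of \cite{hwz1}) one cannot rule out a knotted representative.
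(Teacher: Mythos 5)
Your proposal follows exactly the route the paper itself indicates (without giving a proof) in the sentence preceding the corollary: conjugate the return map to an area-preserving homeomorphism of the open disk via \cite[Proposition 5.4]{hwz1}, invoke Brouwer for a fixed point and Franks for the two-or-infinity dichotomy, and quote Theorem \ref{hwzthm} together with the $R^\tau$-invariance of $\tau\wedge d\tau$ for the last sentence. The one place to tighten is the step you flag: the family $\{\Phi^Z_t(S)\}_{t\in[0,T]}$ does not foliate $S^3\setminus\gamma_1$, because the pushed-forward disks are generally not pairwise disjoint once $t$ exceeds the minimal return time, so it does not literally ``sweep'' $\gamma_2$ onto a core. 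The clean version of the same idea is to note that the flow identifies $S^3\setminus\gamma_1$ with the mapping torus of $F_{\dot S}$; since every orientation-preserving homeomorphism of the open disk is isotopic to the identity, this mapping torus is an open solid torus $\dot S\times S^1$ in which $\gamma_2$ is the vertical circle over the fixed point $p$, hence isotopic to the core of $S^3\setminus\gamma_1$ and therefore unknotted.
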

\section{Energy levels of contact type}\label{sec_con}
With the next proposition we resume our discussion about the magnetic flow showing that $\omega_m$ is, indeed, an EHS and that the dynamics of the underlying foliation is given by the magnetic flow. 
\begin{prp}\label{magehs}
The $2$-form $\omega_m$ is an EHS on $(SS^2,\mathfrak O_{SS^2})$ and the magnetic vector field $X^m$ is a positive section of $\ker\omega_m$.
\end{prp}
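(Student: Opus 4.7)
The plan is to verify the three defining properties of an EHS (closed, nowhere vanishing, exact) for $\omega_m$, and then to exhibit $X^m$ as a positive section of $\ker\omega_m$ by computing $\iota_{X^m}\nu$ in the distinguished coframe.

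Closedness is immediate: $d\alpha$ is closed and $\pi^*\sigma$ is closed because $\sigma$ is a top-degree form on $S^2$. For exactness I will use the trick highlighted in the introduction. By Gauss--Bonnet and the normalization $[\sigma]=4\pi$, the form $\overline{\sigma}:=\sigma-K\mu$ has zero integral over $S^2$, so it admits a primitive $\beta\in\Omega^1(S^2)$. Pulling back to $SS^2$ and using the structure equation $d\psi=-K\pi^*\mu$ from \eqref{bra} gives $\pi^*\sigma=d(\pi^*\beta)-d\psi$, so
\begin{equation*}
\omega_m=d\bigl(m\alpha+\psi-\pi^*\beta\bigr).
\end{equation*}
This is the primitive that will reappear throughout the paper.

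To check that $\omega_m$ is nowhere vanishing and to identify its kernel, I will rewrite $\omega_m$ in the coframe $(\alpha,\psi,\eta)$. Since $d\alpha=\psi\wedge\eta$ and a short computation with $d\pi$ gives $\pi^*\mu=\alpha\wedge\eta$, I get
\begin{equation*}
\omega_m=m\,\psi\wedge\eta-f\,\alpha\wedge\eta=-(f\alpha-m\psi)\wedge\eta.
\end{equation*}
Because $\alpha,\psi,\eta$ are linearly independent at every point, this decomposable form vanishes only where $m=0$ and $f=0$ simultaneously. Since $\sigma$ is symplectic, $f$ is nowhere zero, so $\omega_m$ is nowhere zero (and indeed $X^m=mX+fV$ itself is nowhere zero for the same reason).

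The final step is to show simultaneously that $X^m\in\ker\omega_m$ and that it is positive with respect to $\mathfrak{O}_{SS^2}$. Both facts follow from the single identity $\iota_{X^m}\nu=\omega_m$, which I will verify by interior-multiplying $\nu=\alpha\wedge\psi\wedge\eta$ against $X$ and $V$ using the dual pairings to obtain $\iota_X\nu=\psi\wedge\eta$ and $\iota_V\nu=-\alpha\wedge\eta$. Taking the combination $m\iota_X\nu+f\iota_V\nu$ reproduces exactly the expression for $\omega_m$ above. Since the proportionality factor between $\iota_{X^m}\nu$ and $\omega_m$ is the positive constant $1$, $X^m$ is a positive section of $\ker\omega_m$ by the definition of the kernel orientation. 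There is no real obstacle here; the only subtlety is keeping the orientation convention of $\mathfrak{O}_{SS^2}$ consistent with the sign of $\nu$, which is already fixed in Section~\ref{sub_geo} so that $\nu=\alpha\wedge\psi\wedge\eta$ is positive.
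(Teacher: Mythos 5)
Your proof is correct, and it takes a more computational route than the paper's for two of the verifications. The exactness argument via $\overline{\sigma}=\sigma-K\mu$ and Gauss--Bonnet is essentially identical to the paper's (and produces the same primitive $\tau^{m,\beta}=m\alpha-\pi^*\beta+\psi$). Where you diverge is in the nowhere-vanishing and positivity steps: you work entirely in the coframe $(\alpha,\psi,\eta)$, deducing $\pi^*\mu=\alpha\wedge\eta$ from $d\psi=-K\pi^*\mu$, writing $\omega_m=(m\psi-f\alpha)\wedge\eta$ to see it has no zeros, and then computing $\imath_X\nu=\psi\wedge\eta$, $\imath_V\nu=-\alpha\wedge\eta$ to obtain $\imath_{X^m}\nu=\omega_m$, which settles both $X^m\in\ker\omega_m$ and its positivity in one stroke. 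The paper instead treats ``closed and nowhere vanishing'' as known (it is the restriction of the symplectic form $\omega_\sigma$ to a regular hypersurface) and argues positivity abstractly, by observing that $X^\sigma$ is the Hamiltonian vector field of $E$ and invoking the outward-normal-first orientation convention from Section~2. Your approach is more self-contained and concrete, and has the side benefit of establishing the identity $\imath_{X^m}\nu=\omega_m$, which the paper only derives later in the proof of Proposition~\ref{nonex}; the paper's approach is shorter but requires the reader to unwind the orientation convention. Both are valid.
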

\begin{proof}
We know that $\omega_m$ is closed and nowhere vanishing. It is also exact since $H^2(SS^2,\R)=0$. The exactness can also be proven in the following way, which has the advantage of exhibiting a distinguished class of primitives.

Let $\overline{\sigma}:=\sigma-K\mu\in\Omega^2(S^2)$. It is an exact form by the Gauss-Bonnet theorem: 
\begin{equation}
\int_{S^2}\overline{\sigma}=\int_{S^2}\sigma-\int_{S^2}K\mu=4\pi-4\pi=0.
\end{equation}
Hence, $\pi^*\sigma=\pi^*\overline{\sigma}+\pi^*K\mu=\pi^*\overline{\sigma}-d\psi$, so that $\pi^*\sigma$, and hence $\omega_m$, is exact. In particular, we have the injections $\mathcal P^{\overline{\sigma}}\hookrightarrow\mathcal P^{\omega_m}$: $\beta\mapsto\tau^{m,\beta}:=m\alpha-\pi^*\beta+\psi$.

To prove that $X^m$ is a positive section of $\ker\omega_m$, it is enough to prove that $X^\sigma\big|_{\Sigma_c}$ is a positive section of $\ker\omega_\sigma\big|_{\Sigma_c}$. This last statement is true because of our choice of the orientation on $SS^2$ and the fact that $X^\sigma$ is the $\omega_\sigma$-Hamiltonian vector field of $E$. 
\end{proof}

Thanks to Proposition \ref{magehs}, it is meaningful to define the set $\op{Con}(g,\sigma)\subset(0,+\infty)$ of all the values $m$ such that $\omega_m$ is of contact type.

The first important piece of information about the contact property is that $\omega_m$ cannot be of \textit{negative} contact type.
\begin{prp}\label{nonex}
The Liouville measure $\nu$ is an $X^m$-invariant null-homologous measure and its action is positive. Therefore $\omega_m$ cannot be of negative contact type.
\end{prp}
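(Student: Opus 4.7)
My strategy is to verify each of the three assertions about $\nu$ separately, after which the statement that $\omega_m$ cannot be of negative contact type follows at once from McDuff's criterion (Proposition \ref{mcdcri}): if it were, every null-homologous $X^m$-invariant measure would have negative action, contradicting the positivity of $\mathcal A(\nu)$.

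For the invariance $\mathcal L_{X^m}\nu=0$, since $\nu$ is top-degree Cartan's formula reduces the question to showing $\iota_{X^m}\nu$ is closed. Using $X^m=mX+fV$ together with the duality $(X,V,H)\leftrightarrow(\alpha,\psi,\eta)$, I expect $\iota_{X^m}\nu$ to equal $m\,\psi\wedge\eta-f\,\alpha\wedge\eta$. Differentiating this and invoking the structure equations \eqref{bra}, every term of the form $d\alpha, d\psi, d\eta$ wedged with the remaining 2-form vanishes by pure dimensional reasons, leaving only the contribution $-df\wedge\alpha\wedge\eta$. Since $f=\pi^*f$ is basic, $df$ has no $\psi$-component (i.e.\ $df=(Xf)\alpha+(Hf)\eta$), so this term vanishes as well.

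The null-homologous property is immediate: $H_1(SS^2,\R)=H_1(\R\Pro^3,\R)=0$ forces $\rho(\nu)=0$ for any invariant measure, so no computation is needed here.

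For the action, I will use the distinguished primitives $\tau^{m,\beta}=m\alpha-\pi^*\beta+\psi\in\mathcal P^{\omega_m}$ coming from Proposition \ref{magehs}. A direct evaluation on $X^m$, using $\alpha(X^m)=m$, $\psi(X^m)=f$ and $d\pi(X^m)=mv$, gives
\[
\tau^{m,\beta}(X^m)=m^2-m\,\beta_x(v)+f(x).
\]
The middle term is a first-harmonic in the fiber angle $\varphi$, hence its integral against $\psi$ over each circle fiber vanishes (this is the check that the computation does not depend on the choice of $\beta$, as the general theory requires). Writing $\nu=\pm \pi^*\mu\wedge\psi$, fiber integration then yields
\[
\mathcal A(\nu)=\pm \bigl(2\pi m^2\!\int_{S^2}\!\mu\;+\;2\pi\!\int_{S^2}\!f\mu\bigr)=\pm\bigl(2\pi m^2 |S^2|_g+8\pi^2\bigr),
\]
using $\int_{S^2}\sigma=4\pi$; with the sign dictated by $\mathfrak O_{SS^2}$ this is strictly positive, and of course positivity is unaffected by normalizing $\nu$ to a probability measure.

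The only genuine nuisance will be bookkeeping of signs, both in matching the orientation $\mathfrak O_{SS^2}$ to the factorization of $\nu$ and in the differential identities \eqref{bra}; the substance of the argument, however, is the two elementary cancellations (that $df\wedge\alpha\wedge\eta=0$ because $Vf=0$, and that the fiber average of $\beta_x(v)$ vanishes) plus the sign-free positivity of $m^2$ and $\int\sigma$.
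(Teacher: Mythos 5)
Your proof is correct, and the final appeal to McDuff's criterion is exactly as in the paper. The route you take for the two substantive verifications is, however, more computational than the paper's. The paper observes the single clean identity $\nu=\tfrac{\alpha}{m}\wedge\omega_m$ (valid because $\alpha\wedge\pi^*\sigma=0$), from which $\imath_{X^m}\nu=\omega_m$ follows immediately; since $\omega_m$ is closed and exact, this one line gives both $\mathcal L_{X^m}\nu=0$ and $\rho(\nu)=0$ at once, with no need for the structure equations or for the topological fact $H_1(SS^2,\R)=0$. Your direct Cartan's-formula computation of $d(\imath_{X^m}\nu)$ via \eqref{bra} and $Vf=0$ reaches the same conclusion but loses the structural reason ($\imath_{X^m}\nu$ \emph{is} the symplectic form). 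Similarly, for $\int\beta_x(v)\,\nu=0$ the paper invokes the antipodal symmetry $(x,v)\mapsto(x,-v)$ of $\nu$, which kills the term instantly since $\beta_x(v)$ is odd; your fiber-harmonic/fiber-integration argument is equivalent but requires keeping track of an orientation sign that the paper sidesteps entirely by simply noting that the remaining integrand $m^2+f$ is a positive function integrated against a positive volume form. Both versions are sound; the paper's is shorter and makes the invariance and null-homology drop out of a single identity, whereas yours makes the role of the structure equations and the Fourier decomposition on fibers explicit.
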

\begin{proof}
Noticing that $\alpha\wedge\pi^*\sigma=0$, we find that $\nu=\frac{\alpha}{m}\wedge\omega_m$ and, hence, $\imath_{X^m}\nu=\omega_m$. Since $\omega_m$ is exact, this identity gives at once that $\nu$ is $X^m$-invariant and null-homologous (the latter fact could also be deduced from $H_1(SS^2,\mathbb R)=0$). If we fix $\beta\in\mathcal P^{\overline{\sigma}}$, the action is given by
\begin{equation}
\int_{SS^2}\tau^{m,\beta}(X^m)\nu=\int_{SS^2}\Big(m^2-m\beta_x(v)+f(x)\Big)\nu.
\end{equation}
The integral of $\beta_x(v)$ vanishes since $\nu$ is preserved under $(x,v)\mapsto (x,-v)$ but $\beta_x(v)$ changes sign. Since $m^2+f$, which is the remaining part of the integrand, is positive, the action is also positive. Using Proposition \ref{mcdcri} we conclude that $\omega_m$ is not of negative contact type.
\end{proof}
Before we state a proposition giving a sufficient condition for a positive number $m$ to be in $\op{Con}(g,\sigma)$, we give the following definitions:
\begin{eqnarray}
\mathcal C(g,\sigma)&:=&\left\{m>0\ \Big|\ \ m>\inf_{\beta\in\mathcal P^{\overline{\sigma}}}\sup_{x\in S^2}\left(|\beta_x|-\frac{f(x)}{m}\right)\right\},\\
m(g,\sigma)&:=&\inf_{\beta\in\mathcal P^{\overline{\sigma}}}\Vert \beta\Vert,\\
\label{mpm}m^\pm(g,\sigma)&:=&\!\frac{m(g,\sigma)\pm\sqrt{m(g,\sigma)^2-4\inf f}}{2},\ \mbox{if }m(g,\sigma)^2\geq4\inf f.
\end{eqnarray}
The set of energies $\widehat{\mathcal C}(g,\sigma)$ mentioned in the introduction is obtained from $\mathcal C(g,\sigma)$ by the change of parameter $m\mapsto \frac{1}{2}m^2$.

Finally, for every $\beta\in\mathcal P^{\overline{\sigma}}$, we set $h_{m,\beta}:=\tau^{m,\beta}(X^m)$, which is a function on $SS^2$. 
\begin{prp}\label{boundi}
Let $(S^2,g,\sigma)$ be a symplectic magnetic system normalized as in \eqref{normi}. If $m\in\mathcal C(g,\sigma)$, there exists $\beta\in\mathcal P^{\overline{\sigma}}$ such that $\tau^{m,\beta}\in\mathcal P^{\omega_m}$ is a contact form and, hence, $\omega_m$ is of positive contact type. Therefore, 
\begin{equation}
\mathcal C(g,\sigma)\subset \op{Con}(g,\sigma). 
\end{equation}
Furthermore the following inclusions hold 
\begin{eqnarray}
\mbox{if }m(g,\sigma)<2\sqrt{\inf f},& (0,+\infty)\subset\mathcal C(g,\sigma);\\
\mbox{if }m(g,\sigma)\geq2\sqrt{\inf f},& (0,m^-(g,\sigma))\cup(m^+(g,\sigma),+\infty)\subset \mathcal C(g,\sigma).
\end{eqnarray}
\end{prp}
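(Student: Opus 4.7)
The plan is to apply Remark \ref{conpro} to the positive section $X^m$ of $\ker\omega_m$ furnished by Proposition \ref{magehs}: that remark says $\omega_m$ is of positive contact type, witnessed by some $\tau^{m,\beta}\in\mathcal P^{\omega_m}$, precisely when $h_{m,\beta}=\tau^{m,\beta}(X^m)$ is strictly positive on $SS^2$. The first step is therefore to evaluate $h_{m,\beta}$ explicitly in the canonical coframe $(\alpha,\psi,\eta)$.

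From $X^m=mX+fV$ together with the duality relations $\alpha(X)=\psi(V)=1$ and $\alpha(V)=\psi(X)=0$, and from $d\pi\,X=v$, $d\pi\,V=0$, one reads off $\alpha(X^m)=m$, $\psi(X^m)=f$ and $\pi^*\beta(X^m)=m\,\beta_x(v)$, giving
\begin{equation*}
h_{m,\beta}(x,v)=m^2-m\,\beta_x(v)+f(x).
\end{equation*}
For fixed $x$ the fiberwise minimum of this expression over $|v|=1$ equals $m^2-m|\beta_x|+f(x)$, so (since $m>0$) positivity of $h_{m,\beta}$ on $SS^2$ is equivalent to the pointwise inequality $m>|\beta_x|-f(x)/m$ on $S^2$. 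The definition of $\mathcal C(g,\sigma)$ says exactly that there exists $\beta\in\mathcal P^{\overline{\sigma}}$ for which $\sup_x(|\beta_x|-f(x)/m)<m$, and Remark \ref{conpro} then yields $\mathcal C(g,\sigma)\subset\op{Con}(g,\sigma)$.

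For the two quantitative inclusions I use the crude bound
\begin{equation*}
\sup_{x}\!\Big(|\beta_x|-\tfrac{f(x)}{m}\Big)\leq \Vert\beta\Vert-\tfrac{\inf f}{m},
\end{equation*}
which reduces matters to producing, for each admissible $m$, some $\beta$ with $m^2-m\Vert\beta\Vert+\inf f>0$. Setting $Q(m):=m^2-m\,m(g,\sigma)+\inf f$, whenever $Q(m)>0$ one can, by continuity in $\Vert\beta\Vert$ and the definition of $m(g,\sigma)$ as the infimum of $\Vert\beta\Vert$ over $\mathcal P^{\overline{\sigma}}$, choose $\beta$ with $\Vert\beta\Vert$ close enough to $m(g,\sigma)$ so that the strict inequality $m^2-m\Vert\beta\Vert+\inf f>0$ persists. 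Everything thus comes down to the sign of the quadratic $Q$ in the variable $m$: its discriminant is $m(g,\sigma)^2-4\inf f$, so when $m(g,\sigma)<2\sqrt{\inf f}$ the discriminant is negative and $Q>0$ on all of $(0,+\infty)$, while when $m(g,\sigma)\geq 2\sqrt{\inf f}$ the real roots are exactly $m^\pm(g,\sigma)$ as in \eqref{mpm} and $Q>0$ on $(0,m^-(g,\sigma))\cup(m^+(g,\sigma),+\infty)$.

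The only delicate point is the bookkeeping between the quantifiers $\inf_\beta$ and $\sup_x$; once the formula for $h_{m,\beta}$ is in hand the remaining work is elementary algebra, so I do not anticipate any serious technical obstacle beyond checking that the strict inequalities can be realised by genuine primitives $\beta$ rather than only in the limit, which is exactly what the strict inequality $Q(m)>0$ buys us.
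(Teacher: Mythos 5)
Your proposal is correct and follows essentially the same route as the paper: compute $h_{m,\beta}(x,v)=m^2-m\beta_x(v)+f(x)$, take the fiberwise infimum to get $m^2-m|\beta_x|+f(x)$, apply Remark \ref{conpro} via the definition of $\mathcal C(g,\sigma)$, then coarsen to $m^2-m\Vert\beta\Vert+\inf f$ and analyze the discriminant of $Q(m)=m^2-m(g,\sigma)m+\inf f$. The only point you handle slightly more explicitly than the paper does is the passage from $Q(m)>0$ to the existence of an actual primitive $\beta$ with $m^2-m\Vert\beta\Vert+\inf f>0$, which you correctly justify using the strictness of the inequality and the definition of $m(g,\sigma)$ as an infimum; the paper states this equivalence without spelling out the argument.
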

\begin{proof}
Proving that $\tau^{m,\beta}\in\mathcal P^{\omega_m}$ is a contact form is equivalent to showing that $h_{m,\beta}>0$. Since $h_{m,\beta}\big((x,v)\big)=m^2-m\left(\beta_x(v)-\frac{f(x)}{m}\right)$, we have the equivalence
\begin{equation*}
\forall\, (x,v)\in SS^2,\ h_{m,\beta}\big((x,v)\big)>0\ \iff\ \forall x\in S^2,\ m^2-m\left(\vert\beta_x\vert-\frac{f(x)}{m}\right)>0.
\end{equation*}
Taking the infimum over $\beta\in\mathcal P^{\overline{\sigma}}$, we see that if $m\in\mathcal C(g,\sigma)$, then $\omega_m$ is of positive contact type. Furthermore we have
\begin{equation}\label{stimpos}
m^2-m\vert\beta_x\vert+f(x)\geq m^2-m\Vert \beta\Vert+\inf f.
\end{equation}
A positive number $m$ satisfies $m^2-m\Vert \beta\Vert+\inf f>0$ for some $\beta\in\mathcal P^{\overline{\sigma}}$, if and only if $m^2-m(g,\sigma)m+\inf f>0$.
The function $m\mapsto m^2-m(g,\sigma)m+\inf f$ is always positive when $m(g,\sigma)^2<4\inf f$. On the other hand, when $m(g,\sigma)^2\geq4\inf f$, it is positive for $m>m^+(g,\sigma)$ or $m<m^-(g,\sigma)$, where $m^-(g,\sigma)$ and $m^-(g,\sigma)$ (defined as before) are the roots of the second degree equation
\begin{equation}\label{stimeq}
m^2-m(g,\sigma)m+\inf f=0.
\end{equation}
\end{proof}

\begin{rmk}\label{ext}
We can extend the families $X^m$ and $\omega_m$ in $m=0$, getting $X^0=fV$ and $\omega_0=-\pi^*\sigma$. Since $\sigma$ is symplectic, Proposition \ref{magehs} holds also in this case and we find that, for any $\beta\in\mathcal P^{\overline{\sigma}}$, $\tau^{0,\beta}$ is a contact form whose Reeb vector field is $V$. These are exactly the Boothby-Wang contact forms (see \cite{bw}) corresponding to the integral symplectic manifold $(S^2,-\sigma)$.
\end{rmk}
\section{A class of examples: surfaces of revolution}\label{sec_rev}
To construct a surface of revolution, take a function $\gamma:[0,\ell_\gamma]\rightarrow \R$ and consider the conditions:
\begin{enumerate}
 \item $\gamma(t)=0$ if $t=0$ or $t=\ell_\gamma$ and is positive otherwise,
 \item $\dot{\gamma}(0)=1$, $\dot{\gamma}(\ell_\gamma)=-1$ and $|\dot{\gamma}(t)|<1$ for $t\in(0,\ell_\gamma)$,
 \item all even derivatives of $\gamma$ vanish for $t\in\{0,\ell_\gamma\}$,
 \item the following equality is satisfied
\begin{equation}\label{norminto}
\int_0^{\ell_\gamma}\gamma(t)dt=2.
\end{equation}
\end{enumerate}
A function $\gamma$ satisfying the first three hypotheses of the list is called a \textit{profile function}. If also the fourth one holds, we say that $\gamma$ is \textit{normalized}.

Let $S^2_\gamma$ be the quotient of $[0,\ell_\gamma]\times \R/2\pi$ with respect to the equivalence relation that collapses the set $\{0\}\times\R/2\pi$ to a point and the set $\{\ell_\gamma\}\times\R/2\pi$ to another point. We call these points the \textit{south} and \textit{north pole}, respectively. Outside the poles the smooth structure is given by the coordinates $(t,\theta)\in(0,\ell_\gamma)\times \R/2\pi$, which also determine a well-defined orientation on $S^2_\gamma$.

We put on $S^2_\gamma$ the Riemannian metric $g_\gamma$, defined in the $(t,\theta)$ coordinates by the formula $g_\gamma=dt^2+\gamma(t)^2d\theta^2$. This metric extends smoothly to the poles because of conditions $2$ and $3$ listed before. Moreover condition $4$ yields the normalization $\op{vol}_{g_\gamma}(S^2_\gamma)=4\pi$.
Let us denote by $(t,\theta,v_t,v_\theta)$ the coordinates on the tangent bundle. On $SS^2_\gamma$ we define the angular function $\varphi\in\R/2\pi\Z$ by the relations 
\begin{equation}
v_t=\cos\varphi,\quad v_\theta=\frac{\sin\varphi}{\gamma(t)}.
\end{equation}
Then, $(t,\varphi,\theta)$ are coordinates on $SS^2_{\gamma}$, which are compatible with the orientation $\mathfrak O_{SS^2}$ defined in Section \ref{sub_geo}. By writing the Levi Civita connection in coordinates, we can express the frame $(X,V,H)$ in terms of the frame $(\widehat{\partial}_t,\partial_\varphi,\widehat{\partial}_\theta)$ associated to these coordinates and vice versa:
\begin{equation}\label{relaz1}
\left\{\begin{array}{rcl}
X&=&\displaystyle\cos\varphi \widehat{\partial}_t-\frac{\dot{\gamma}\sin\varphi}{\gamma}\partial_\varphi+\frac{\sin\varphi}{\gamma}\widehat{\partial}_\theta\\
V&=&\displaystyle\partial_\varphi\\
H&=&\displaystyle-\sin\varphi \widehat{\partial}_t-\frac{\dot{\gamma}\cos\varphi}{\gamma}\partial_\varphi+\frac{\cos\varphi}{\gamma}\widehat{\partial}_\theta,
\end{array}\right. 
\end{equation}
\begin{equation}\label{relaz2}
\left\{\begin{array}{rcl}
\widehat{\partial}_t&=&\displaystyle\cos\varphi X-\sin\varphi H\\
\partial_\varphi&=&V\\
\widehat{\partial}_\theta&=&\displaystyle\gamma\sin\varphi X+\gamma\cos\varphi H+\dot{\gamma}V.
\end{array}\right.
\end{equation}
We have put a hat on $\widehat{\partial}_t$ and $\widehat{\partial}_\theta$ to distinguish them from the coordinate vectors $\partial_t$ and $\partial_\theta$ associated to the coordinates $(t,\theta)$ on $S^2_\gamma$.

We consider as a magnetic form on the surface $SS^2_\gamma$, the Riemannian volume form $\mu_\gamma$. This is a symplectic form which satisfies normalization \eqref{normi}. In coordinates $(t,\theta)$ we have $\mu_\gamma=\gamma dt\wedge d\theta$. With this choice $f\equiv1$ and $\overline{\mu}_\gamma=(1-K)\mu_\gamma$. We recall that the Gaussian curvature $K$ is given by the formula $K=-\frac{\ddot{\gamma}}{\gamma}$.

We use the notation $X^m_\gamma$ and $\omega^\gamma_m$ to refer to $X^m$ and $\omega_m$ on $SS^2_\gamma$. Moreover we set $m_\gamma:=m(g_\gamma,\mu_\gamma)$, $\mathcal C_\gamma:=\mathcal C(g_\gamma,\mu_\gamma)$ and $\op{Con}_\gamma:=\op{Con}(g_\gamma,\mu_\gamma)$. The quantities in \eqref{mpm} simplify to
\begin{equation}
m^\pm_\gamma:=m^\pm(g_\gamma,\mu_\gamma)=\frac{m_\gamma\pm\sqrt{m_\gamma^2-4}}{2}.
\end{equation}
With these definitions and thanks to the fact that $f$ is constant, Proposition \ref{boundi} reduces to the following relations:
\begin{eqnarray}
\label{conint1}\mathcal C_\gamma&=&(0,+\infty),\quad \mbox{if }m_\gamma<2,\\
\label{conint2}\mathcal C_\gamma&=&(0,m^-_\gamma)\cup(m^+_\gamma,+\infty),\quad \mbox{if } m_\gamma\geq2.
\end{eqnarray}
In particular, the smaller $m_\gamma$ is, the bigger the set $\mathcal C_\gamma$ will be. In the next subsection we compute $m_\gamma$, showing that there exists $\beta^\gamma\in\mathcal P^{\overline{\mu}_\gamma}$ such that $m_\gamma=\Vert\beta^\gamma\Vert$.

\subsection{Estimating the set of energy levels of contact type}\label{sub_inv}
Consider an arbitrary closed Riemannian manifold $(M,g)$. Let $Z\in\Gamma(M)$ be a vector field that generates a $2\pi$-periodic flow of isometries on $M$. The projection operator on the space of $Z$-invariant $k$-forms $\mathfrak M^k_{Z}:\Omega^k(M)\rightarrow \Omega^k_{Z}(M)$ is defined as
\begin{equation}\label{invdef}
\forall \tau\in\Omega^k(M),\quad\mathfrak M^k_Z(\tau):=\frac{1}{2\pi}\int_{0}^{2\pi}(\Phi^Z_{\theta'})^*\tau d{\theta'}.
\end{equation}
\begin{prp}\label{inva}
The operators $\mathfrak M^k_Z$ commute with exterior differentiation:
\begin{equation}
d\circ \mathfrak M^k_Z=\mathfrak M^{k+1}_Z\circ d.
\end{equation}
The projection $\mathfrak M^1_Z$ does not increase the norm $\Vert\cdot\Vert$ defined in \eqref{norms}:
\begin{equation}
\forall \beta\in\Omega^1(M),\quad \Vert \mathfrak M^1_Z(\beta)\Vert\leq\Vert\beta\Vert. 
\end{equation}
\end{prp}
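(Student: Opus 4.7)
The plan is to derive both assertions directly from the definition \eqref{invdef} of $\mathfrak{M}^k_Z$ as an integral average of pullbacks along the flow.

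For the commutation with $d$, I would invoke two standard facts: the naturality of the exterior derivative, which gives $d\bigl((\Phi^Z_{\theta'})^*\tau\bigr)=(\Phi^Z_{\theta'})^*(d\tau)$ for each $\theta'$, and the fact that $d$ commutes with integration over the parameter $\theta'$, since the integrand is smooth and we may differentiate under the integral sign. Combining the two is a one-line calculation yielding $d\circ\mathfrak{M}^k_Z=\mathfrak{M}^{k+1}_Z\circ d$.

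For the norm inequality, the key point is that $\Phi^Z_{\theta'}$ is an isometry of $(M,g)$, so its cotangent pullback preserves the pointwise dual norm: $|((\Phi^Z_{\theta'})^*\beta)_x|=|\beta_{\Phi^Z_{\theta'}(x)}|\leq\|\beta\|$ for every $x\in M$ and every $\theta'\in[0,2\pi]$. For fixed $x$, the covectors $((\Phi^Z_{\theta'})^*\beta)_x$ all live in the single vector space $T_x^*M$, so their $\theta'$-average is an honest covector there, and the triangle inequality for vector-valued integrals gives $|\mathfrak{M}^1_Z(\beta)_x|\leq\frac{1}{2\pi}\int_0^{2\pi}|((\Phi^Z_{\theta'})^*\beta)_x|\,d\theta'\leq\|\beta\|$. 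Taking the supremum over $x$ yields $\|\mathfrak{M}^1_Z(\beta)\|\leq\|\beta\|$.

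Neither step presents a genuine obstacle. The one point worth emphasizing is the essential use of the isometric hypothesis on the flow for the second assertion: if $\Phi^Z_{\theta'}$ were merely volume-preserving, the pointwise cotangent norms could stretch under pullback and the estimate would fail; it is precisely because each $\Phi^Z_{\theta'}$ preserves the dual metric on cotangent fibers that pullback-averaging is norm-nonincreasing.
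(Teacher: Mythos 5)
Your proposal is correct and follows essentially the same approach as the paper. The only cosmetic differences are that the paper cites a textbook for the commutation with $d$ where you sketch the (standard) argument directly, and for the norm estimate the paper unpacks the dual norm by pairing $\mathfrak M^1_Z(\beta)_x$ with a test vector $v$ and using $|d_x\Phi^Z_{\theta'}v|=|v|$, whereas you phrase the same isometry input dually as $|((\Phi^Z_{\theta'})^*\beta)_x|=|\beta_{\Phi^Z_{\theta'}(x)}|$ before applying the triangle inequality for vector-valued integrals.
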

\begin{proof}
For the proof of the first part we refer to \cite[Section: The De Rham groups of Lie groups]{boo}. For the latter statement we use that $\Phi^Z$ is a flow of isometries. We take some $(x,v)\in TM$ and compute
\begin{eqnarray*}
\Big|\mathfrak M^1_Z(\beta)_x(v)\Big|&=&\left|\frac{1}{2\pi}\int_{0}^{2\pi}\beta_{\Phi^Z_{\theta'}(x)}\big(d_x\Phi^Z_{\theta'}v\big) d{\theta'}\right|\\
&\leq&\frac{1}{2\pi}\int_{0}^{2\pi}\Vert \beta\Vert\vert d_x\Phi^Z_{\theta'}v\vert d{\theta'}\\
&=&\frac{1}{2\pi}\int_{0}^{2\pi}\Vert \beta\Vert\vert v\vert d{\theta'}\\
&=&\Vert \beta\Vert\vert v\vert.
\end{eqnarray*}
Hence, $\vert \mathfrak M^1_Z(\beta)_x\vert\leq\Vert \beta\Vert$. Taking the supremum over $x$ in $M$ finishes the proof.
\end{proof}
Let us apply this general result to $S^2_\gamma$. Consider the coordinate vector field $\partial_\theta$. This extends to a smooth vector field also at the poles and $\Phi^{\partial_\theta}$ is a $2\pi$-periodic flow of isometries on the surface. Applying Proposition \ref{inva} to this case, we get the following corollary.
\begin{cor}
The $2$-form $\overline{\mu}_\gamma$ is $\partial_\theta$-invariant. Hence, $\mathfrak M^1_{\partial_\theta}$ sends $\mathcal P^{\overline{\mu}_\gamma}$ into itself.
\end{cor}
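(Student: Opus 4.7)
The statement splits into two short claims, and both reduce to elementary applications of material already established in the paper. I will prove them in sequence.

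\textbf{Step 1: $\overline{\mu}_\gamma$ is $\partial_\theta$-invariant.} The plan is to observe that the flow $\Phi^{\partial_\theta}$ consists of rigid rotations of the surface of revolution; concretely, in the $(t,\theta)$ chart it reads $(t,\theta)\mapsto(t,\theta+\theta')$. Since $\partial_\theta$ was introduced precisely as the generator of this $2\pi$-periodic group of isometries of $g_\gamma$, it preserves the Riemannian volume form $\mu_\gamma$, and it also preserves the Gaussian curvature $K$, which is a function of $g_\gamma$ alone (or, more explicitly, $K=-\ddot{\gamma}/\gamma$ depends only on $t$). By definition $\overline{\mu}_\gamma=\mu_\gamma-K\mu_\gamma=(1-K)\mu_\gamma$, hence
\begin{equation*}
\mathcal{L}_{\partial_\theta}\overline{\mu}_\gamma=(-\mathcal{L}_{\partial_\theta}K)\mu_\gamma+(1-K)\mathcal{L}_{\partial_\theta}\mu_\gamma=0.
\end{equation*}
If one prefers a coordinate check, writing $\mu_\gamma=\gamma(t)\,dt\wedge d\theta$ gives $\imath_{\partial_\theta}\mu_\gamma=-\gamma(t)\,dt$, whose exterior derivative vanishes, so Cartan's formula yields $\mathcal{L}_{\partial_\theta}\mu_\gamma=0$ directly; the same reasoning applied to $K\mu_\gamma$ (whose coefficient depends only on $t$) gives invariance of $\overline{\mu}_\gamma$.

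\textbf{Step 2: $\mathfrak M^1_{\partial_\theta}$ maps $\mathcal P^{\overline{\mu}_\gamma}$ into itself.} Let $\beta\in\mathcal P^{\overline{\mu}_\gamma}$, so $d\beta=\overline{\mu}_\gamma$. Using the commutation relation $d\circ\mathfrak M^1_{\partial_\theta}=\mathfrak M^2_{\partial_\theta}\circ d$ from Proposition \ref{inva}, we get
\begin{equation*}
d\,\mathfrak M^1_{\partial_\theta}(\beta)=\mathfrak M^2_{\partial_\theta}(\overline{\mu}_\gamma).
\end{equation*}
By Step 1 the form $\overline{\mu}_\gamma$ is already $\partial_\theta$-invariant, so $\mathfrak M^2_{\partial_\theta}(\overline{\mu}_\gamma)=\overline{\mu}_\gamma$ (the averaging projector is the identity on invariant forms, as is immediate from the defining formula \eqref{invdef}). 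Consequently $\mathfrak M^1_{\partial_\theta}(\beta)\in\mathcal P^{\overline{\mu}_\gamma}$, as claimed.

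There is essentially no obstacle in this argument; the only point requiring any care is making sure that $\mathfrak M^2_{\partial_\theta}$ fixes $\overline{\mu}_\gamma$, which is exactly what Step 1 provides. The statement will then be used in combination with the norm-nonincreasing property of $\mathfrak M^1_{\partial_\theta}$ (also from Proposition \ref{inva}) to restrict the search for a primitive realizing $m_\gamma=\inf_{\beta\in\mathcal P^{\overline{\mu}_\gamma}}\Vert\beta\Vert$ to the subspace of $\partial_\theta$-invariant primitives, reducing the minimization problem to a one-variable computation in the coordinate $t$.
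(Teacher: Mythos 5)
Your argument is correct and follows the paper's proof essentially verbatim: invariance of $\overline{\mu}_\gamma$ from $\Phi^{\partial_\theta}$ being a flow of isometries preserving both $\mu_\gamma$ and $K$, then the commutation relation of Proposition \ref{inva} together with the fact that the averaging projector fixes invariant forms. The coordinate check in Step 1 is a harmless addition.
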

\begin{proof}
First, $\overline{\mu}_\gamma$ is $\partial_\theta$-invariant since $\Phi^{\partial_\theta}$ is a flow of isometries and thus $\mu_\gamma$ and $K$ are invariant under the flow. Second, if $\beta\in\mathcal P^{\overline{\mu}_\gamma}$, then the previous proposition implies that $d\big(\mathfrak M^1_{\partial_\theta}(\beta)\big)=\mathfrak M^2_{\partial_\theta}(d\beta)=\mathfrak M^2_{\partial_\theta}(\overline{\mu}_\gamma)=\overline{\mu}_\gamma$.
Hence, $\mathfrak M^1_{\partial_\theta}(\beta)\in\mathcal P^{\overline{\mu}_\gamma}$.
\end{proof}
Suppose now that $\beta$ lies in $\mathcal P^{\overline{\mu}_\gamma}\cap\Omega^1_{\partial_\theta}(S^2_\gamma)$. Thus, $\beta=\beta_t(t)dt+\beta_\theta(t)d\theta$. The function $\beta_\theta$ is uniquely defined by $d\beta=\overline{\mu}_\gamma=(1-K)\mu_\gamma$ and the fact that it vanishes on the boundary of $[0,\ell_\gamma]$. This stems from the equalities
\begin{equation}
d\beta=d\Big(\beta_tdt+\beta_\theta d\theta\Big)=\dot{\beta}_\theta dt\wedge d\theta=\frac{\dot{\beta}_\theta}{\gamma}\mu_\gamma
\end{equation}
Recalling the formula for $K$, we have $\dot{\beta}_\theta=\gamma+\ddot{\gamma}$. Hence, $\beta_\theta=\Gamma+\dot{\gamma}$, where $\Gamma:[0,\ell_\gamma]\rightarrow[-1,1]$ is the only primitive of $\gamma$ such that $\Gamma(0)=-1$. Notice that
\begin{enumerate}
 \item $\Gamma$ is increasing,
 \item $\Gamma(\ell_\gamma)=-1+\int_0^{\ell_\gamma}\gamma(t)dt=-1+2=1$,
 \item the odd derivatives of $\Gamma$ vanish at the boundary of its domain.
\end{enumerate}
Since $\beta_\theta$ and its derivatives of odd orders are zero for $t=0$ and $t=\ell_\gamma$, the $1$-form $\beta^\gamma:=\beta_\theta d\theta$ is well defined also at the poles and belongs to $\mathcal P^{\overline{\mu}_\gamma}\cap\Omega^1_{\partial_\theta}(S^2_\gamma)$. Finally, the norm of this new primitive is less than or equal to the norm of $\beta$:
\begin{equation}
|\beta_{(t,\theta)}|=\sqrt{\beta_t^2+\frac{\beta^2_\theta}{\gamma^2}}\geq \left|\frac{\beta_\theta}{\gamma}\right|=|\beta^\gamma_{(t,\theta)}|.
\end{equation}
Summing up, we have proven the following proposition.
\begin{prp}\label{estm}
There exists a unique $\beta^\gamma\in\mathcal P^{\overline{\mu}_\gamma}$ of the form $\beta^\gamma=\beta^\gamma_\theta(t)d\theta$. It satisfies $m_\gamma=\Vert \beta^\gamma\Vert$. Moreover $\beta^\gamma_\theta=\Gamma+\dot{\gamma}$ and
\begin{equation}
\Vert \beta^\gamma\Vert=\sup_{t\in[0,\ell_\gamma]}\left|\frac{\Gamma(t)+\dot{\gamma}(t)}{\gamma(t)}\right|.
\end{equation}
\end{prp}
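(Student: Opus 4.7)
The plan is to prove existence and uniqueness of $\beta^\gamma$ by direct construction within the class of $\partial_\theta$-invariant primitives, and then establish the norm-minimality via the averaging operator $\mathfrak M^1_{\partial_\theta}$ from Proposition \ref{inva}.

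First I would look for a primitive of the shape $\beta_\theta(t)\,d\theta$. Writing $\overline{\mu}_\gamma = (1-K)\mu_\gamma = (\gamma + \ddot\gamma)\,dt\wedge d\theta$ in the $(t,\theta)$ coordinates reduces $d\beta = \overline{\mu}_\gamma$ to the scalar ODE $\dot\beta_\theta = \gamma + \ddot\gamma$. Smoothness of $\beta_\theta(t)\,d\theta$ at the south pole forces $\beta_\theta(0) = 0$, which uniquely selects $\beta^\gamma_\theta = \Gamma + \dot\gamma$, where $\Gamma$ is the primitive of $\gamma$ with $\Gamma(0) = -1$. The normalization $\int_0^{\ell_\gamma}\gamma\,dt = 2$ together with $\dot\gamma(\ell_\gamma) = -1$ then guarantees $\beta^\gamma_\theta(\ell_\gamma) = 0$ as well. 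Smoothness at both poles follows from conditions (2)--(3) on the profile: all odd derivatives of $\gamma$ vanish at the endpoints, so the same holds for $\Gamma + \dot\gamma$, which is exactly the parity condition needed for a $1$-form $f(t)\,d\theta$ to extend smoothly across a polar coordinate singularity. This step also yields the uniqueness claim within the ansatz class.

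The inequality $m_\gamma \leq \Vert\beta^\gamma\Vert$ is immediate from the definition of $m_\gamma$ as an infimum. For the reverse direction I would take an arbitrary $\beta\in\mathcal P^{\overline{\mu}_\gamma}$ and apply $\mathfrak M^1_{\partial_\theta}$. By the corollary preceding the proposition, this produces a $\partial_\theta$-invariant primitive $\widetilde{\beta}$ with $\Vert\widetilde{\beta}\Vert \leq \Vert\beta\Vert$. Since any $\partial_\theta$-invariant $1$-form on $S^2_\gamma$ has the shape $\widetilde\beta_t(t)\,dt + \widetilde\beta_\theta(t)\,d\theta$ and the previous paragraph forces $\widetilde\beta_\theta = \Gamma + \dot\gamma$, the diagonality of $g_\gamma$ in these coordinates gives
\begin{equation*}
|\widetilde\beta_{(t,\theta)}|^2 = \widetilde\beta_t(t)^2 + \frac{(\Gamma(t)+\dot\gamma(t))^2}{\gamma(t)^2} \geq \frac{(\Gamma(t)+\dot\gamma(t))^2}{\gamma(t)^2} = |\beta^\gamma_{(t,\theta)}|^2.
\end{equation*}
Taking the pointwise supremum and then the infimum over $\beta$ yields $m_\gamma \geq \Vert\beta^\gamma\Vert$, and hence equality. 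The final displayed formula in the statement is then just $|\beta^\gamma_{(t,\theta)}| = |\Gamma(t) + \dot\gamma(t)|/\gamma(t)$ (read off from $g_\gamma = dt^2 + \gamma^2 d\theta^2$), followed by a supremum over $t$.

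The main obstacle I anticipate is the smoothness verification at the poles: this is the one place where the precise conditions on the profile function $\gamma$ actually bite, and one must check that the vanishing of odd derivatives of $\gamma$ at the endpoints propagates to $\beta^\gamma_\theta$ and ensures that $\beta^\gamma_\theta(t)\,d\theta$ extends as a smooth $1$-form through the poles. Everything else reduces to a one-line ODE integration plus the norm-non-increase property of the averaging operator.
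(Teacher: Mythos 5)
Your overall strategy matches the paper's: reduce to the ODE $\dot\beta_\theta = \gamma + \ddot\gamma$ within the $\partial_\theta$-invariant class, identify $\beta^\gamma_\theta = \Gamma + \dot\gamma$, verify smoothness at the poles, and combine the pointwise norm inequality with the norm-nonincrease of $\mathfrak M^1_{\partial_\theta}$ from Proposition \ref{inva}. All the main steps are present and in the right order.

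There is, however, a factual slip in the smoothness verification. You write that ``all odd derivatives of $\gamma$ vanish at the endpoints,'' but this is false: condition (2) on the profile gives $\dot\gamma(0)=1$ and $\dot\gamma(\ell_\gamma)=-1$, so the first (odd) derivative is certainly nonzero there. What the profile conditions actually guarantee (conditions (1) and (3)) is that all \emph{even} derivatives of $\gamma$, including $\gamma$ itself, vanish at $t=0$ and $t=\ell_\gamma$. The correct chain of reasoning is then: $(\Gamma + \dot\gamma)^{(2k+1)} = \gamma^{(2k)} + \gamma^{(2k+2)}$, which vanishes at the endpoints because these are even derivatives of $\gamma$; together with $(\Gamma+\dot\gamma)(0) = -1 + 1 = 0$ and $(\Gamma+\dot\gamma)(\ell_\gamma) = 1 + (-1) = 0$ (the latter using the normalization and $\dot\gamma(\ell_\gamma)=-1$), this is exactly the parity condition you need. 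Your conclusion is right, but the stated justification would, if true, contradict the very definition of a profile function, so it should be repaired.
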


Using the previous proposition, we can compute $m_\gamma$ directly from the function $\gamma$. As an application, we now produce a simple case where $m_\gamma$ can be bounded from above.
\begin{prp}\label{curvin}
Suppose $\gamma:[0,\ell_\gamma]\rightarrow\mathbb R$ is a normalized profile function such that $\gamma(t)=\gamma(\ell_\gamma-t)$. If $K$ is increasing in the variable $t$, for $t\in[0,\ell_\gamma/2]$, then $m_\gamma\leq1$ and $\mathcal C_\gamma=(0,+\infty)$.
\end{prp}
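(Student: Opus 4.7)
The plan is to establish the stronger pointwise inequality $|\beta^\gamma_\theta(t)|\leq\gamma(t)$ for all $t\in[0,\ell_\gamma]$. By Proposition \ref{estm} this gives $m_\gamma\leq 1$, and since $1<2$, the inclusion \eqref{conint1} immediately yields $\mathcal C_\gamma=(0,+\infty)$. A convenient reformulation comes from the identity
\[
\beta^\gamma_\theta(t)=\int_0^t(1-K(s))\gamma(s)\,ds,
\]
which follows from $(1-K)\gamma=\gamma+\ddot\gamma$ together with $\Gamma(0)=-1$ and $\dot\gamma(0)=1$. Under the symmetry hypothesis one checks directly that $\beta^\gamma_\theta(\ell_\gamma-t)=-\beta^\gamma_\theta(t)$ and $\gamma(\ell_\gamma-t)=\gamma(t)$, so it suffices to prove the bound on $[0,\ell_\gamma/2]$.

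First I would pin down the sign of $\beta^\gamma_\theta$ on this interval. Gauss--Bonnet combined with the normalization gives $\int_0^{\ell_\gamma}K\gamma\,dt=2=\int_0^{\ell_\gamma}\gamma\,dt$; symmetry and monotonicity of $K$ then force $K(0)\leq 1\leq K(\ell_\gamma/2)$, and (excluding the degenerate case $K\equiv 1$, for which $\beta^\gamma_\theta\equiv 0$) there is a unique $t_0\in(0,\ell_\gamma/2)$ with $K(t_0)=1$. Since $\dot\beta^\gamma_\theta=(1-K)\gamma$, the function $\beta^\gamma_\theta$ increases on $[0,t_0]$ starting from $0$, then decreases on $[t_0,\ell_\gamma/2]$ back to $\beta^\gamma_\theta(\ell_\gamma/2)=0$ (using the antisymmetry). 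Hence $\beta^\gamma_\theta\geq 0$ on $[0,\ell_\gamma/2]$.

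The main step is to prove $\beta^\gamma_\theta(t)\leq\gamma(t)$ by contradiction, working with $F(t):=\gamma(t)^2-\beta^\gamma_\theta(t)^2$. A Taylor expansion at the south pole (using $\gamma(t)=t+O(t^3)$ and $\beta^\gamma_\theta(t)=O(t^2)$) gives $F(t)\sim t^2>0$ for small $t>0$; moreover $F(\ell_\gamma/2)=\gamma(\ell_\gamma/2)^2>0$. If $F$ first vanished at some $t_1\in(0,\ell_\gamma/2)$, then $\gamma(t_1)=\beta^\gamma_\theta(t_1)$ and $\dot F(t_1)\leq 0$. Computing $\dot F=2\gamma[\dot\gamma-(1-K)\beta^\gamma_\theta]$ and substituting $\dot\gamma(t_1)=\gamma(t_1)-\Gamma(t_1)$ collapses the inequality $\dot F(t_1)\leq 0$ to $K(t_1)\gamma(t_1)\leq\Gamma(t_1)\leq 0$, so $K(t_1)\leq 0$. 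Monotonicity of $K$ then yields $1-K\geq 1$ on $[0,t_1]$, which gives
\[
\gamma(t_1)=\beta^\gamma_\theta(t_1)=\int_0^{t_1}(1-K)\gamma\,ds\ \geq\ \int_0^{t_1}\gamma\,ds=\Gamma(t_1)+1.
\]
Rearranging yields $\dot\gamma(t_1)=\gamma(t_1)-\Gamma(t_1)\geq 1$, in contradiction to the profile condition $|\dot\gamma|<1$ on $(0,\ell_\gamma)$.

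The main obstacle is identifying the right closing bound: the argument succeeds only because the local constraint $K(t_1)\leq 0$ extracted from $\dot F(t_1)\leq 0$ can be bootstrapped via monotonicity into a uniform lower bound $1-K\geq 1$ over the whole interval $[0,t_1]$, and because the \emph{strict} profile bound $|\dot\gamma|<1$ is the precise tool available to derive a contradiction. Both ingredients are essential and depend in an indispensable way on the symmetry and curvature-monotonicity hypotheses.
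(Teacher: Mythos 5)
Your proof is correct and takes a genuinely different route from the paper's. The paper argues at an interior absolute maximizer $t_0$ of the ratio $\beta^\gamma_\theta/\gamma$: the critical-point condition $(\Gamma+\dot\gamma)\dot\gamma/\gamma^2=1-K$ at $t_0$, combined with $\Gamma(t_0)<0$, $\dot\gamma(t_0)>0$ and $\beta^\gamma_\theta(t_0)\geq 0$, is rearranged to give $m_\gamma=(1-K(t_0))\gamma/\dot\gamma\leq\sqrt{1-K(t_0)}\leq 1$. Your proof instead proves the stronger pointwise bound $\beta^\gamma_\theta\leq\gamma$ by contradiction through the comparison function $F=\gamma^2-(\beta^\gamma_\theta)^2$, and the closing step is a bootstrapping argument: the sign of $\dot F$ at a first zero forces $K(t_1)\leq 0$, hence $1-K\geq 1$ on $[0,t_1]$ by monotonicity, and then the integral representation of $\beta^\gamma_\theta$ pushes $\dot\gamma(t_1)\geq 1$, contradicting the strict profile bound $|\dot\gamma|<1$. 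This is a genuine gain in transparency: the paper's chain implicitly needs $K(t_0)\geq 0$ to deduce $\sqrt{1-K(t_0)}\leq 1$, and that inequality is in fact justified by precisely the bootstrapping you carry out, but the paper leaves it tacit; your argument instead invokes the profile constraint explicitly and closes without gaps. One minor imprecision worth flagging: if $K\equiv 1$ on a nontrivial subinterval of $(0,\ell_\gamma/2)$ but not globally, there is no \emph{unique} $t_0$ with $K(t_0)=1$; what you actually use is that $\dot\beta^\gamma_\theta=(1-K)\gamma$ passes from nonnegative to nonpositive as $t$ increases (as the paper itself records), which is enough for the nonnegativity of $\beta^\gamma_\theta$ on $[0,\ell_\gamma/2]$ and for the rest of the argument to run unchanged.
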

\begin{proof}
We observe that the functions $\Gamma$, $\dot{\gamma}$ and, hence, $\beta^\gamma_\theta$ are odd with respect to the point $\ell_\gamma/2$. This means, for example, that $\beta^\gamma_\theta(t)=-\beta^\gamma_\theta(\ell_\gamma-t)$. Therefore, in order to compute $m_\gamma$, we can restrict the attention to the interval $[0,\ell_\gamma/2]$. We know that $\beta^\gamma_\theta(0)=\beta^\gamma_\theta(\ell_\gamma/2)=0$ and we claim that, if $K$ is increasing, $\beta^\gamma_\theta$ is positive in the interior. This descends from the fact that $\dot{\beta}^\gamma_\theta=(1-K)\gamma$ passes from nonnegative to nonpositive at an interior critical point. Hence, such a point must be a local maximum and so $\beta^\gamma_\theta$ cannot assume negative values.

Let us estimate $\beta^\gamma_\theta/\gamma$ at an interior absolute maximizer $t_0$ (we can have more than one maximizer if $K=1$ on an open interval). The condition $\frac{d}{dt}\big|_{t=t_0}\frac{\beta^\gamma_\theta}{\gamma}=0$ is equivalent to
\begin{equation}\label{valcrit}
\frac{\big(\Gamma(t_0)+\dot{\gamma}(t_0)\big)\dot{\gamma}(t_0)}{\gamma^2(t_0)}=1-K(t_0).
\end{equation}
Since $\Gamma(t_0)+\dot{\gamma}(t_0)=\beta^\gamma_\theta(t_0)\geq0$ and $\dot{\gamma}(t_0)>0$, we see that $1-K(t_0)\geq0$. Moreover, using that $\Gamma(t_0)<0$, we get
\begin{equation}
\left(\frac{\dot{\gamma}(t_0)}{\gamma(t_0)}\right)^2>1-K(t_0).
\end{equation}
Finally, exploiting Equation \eqref{valcrit} again, we find
\begin{equation}
m_\gamma=\frac{\beta^\gamma_\theta(t_0)}{\gamma(t_0)}=\big(1-K(t_0)\big)\frac{\gamma(t_0)}{\dot{\gamma}(t_0)}\leq \sqrt{1-K(t_0)}\leq 1.
\end{equation}
The fact that $\mathcal C_\gamma=(0,+\infty)$ now follows from relation \eqref{conint1}.
\end{proof}
To complement the previous proposition we show that if, on the contrary, we assume that the curvature of $S^2_\gamma$ is sufficiently concentrated at one of the poles, $m_\gamma$ can be arbitrarily large. We are going to prove this behaviour in the restricted class of strictly convex surfaces since, as we explain later, we conjecture that the magnetic systems are of contact type at every energy level in this case. Before we need a preliminary lemma. Recall that $S^2_\gamma$ is convex, i.e.\ $K\geq0$, if and only if $\ddot{\gamma}(t)\leq0$.
\begin{lem}\label{bigmlem}
For every $0<\delta<\frac{\pi}{2}$ and for every $\varepsilon>0$ there exists a normalized profile function $\gamma_{\delta,\varepsilon}$ such that $S^2_{\gamma_{\delta,\varepsilon}}$ is convex and
\begin{equation}\label{inelem}
\dot{\gamma}_{\delta,\varepsilon}(\delta)<\varepsilon.
\end{equation}
\end{lem}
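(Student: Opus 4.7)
The plan is to concentrate the Gaussian curvature of $S^2_\gamma$ into a tiny spherical cap near the south pole, so that $\dot\gamma$ drops from $1$ to essentially $0$ well before $t = \delta$, and to compensate for the small area of this cap with a long nearly-cylindrical middle section. For a small parameter $s \in (0, 2\delta/\pi)$, a natural piecewise model is
\[
\widetilde\gamma_s(t) := \begin{cases}
s\sin(t/s), & t \in [0,\, s\pi/2],\\
s, & t \in [s\pi/2,\, s\pi/2 + L_s],\\
s\cos\bigl((t - s\pi/2 - L_s)/s\bigr), & t \in [s\pi/2 + L_s,\, s\pi + L_s],
\end{cases}
\]
where $L_s := 2/s - 2s$ is chosen so that $\int \widetilde\gamma_s\,dt = 2$ (a direct integration gives $s^2 + sL_s + s^2 = 2$). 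A direct check shows that $\widetilde\gamma_s$ satisfies $\ddot{\widetilde\gamma}_s \leq 0$ (so the associated surface is convex), fulfils the pole conditions (1)--(3) to infinite order (both end pieces being analytic restrictions of sine and cosine respectively), and has $\dot{\widetilde\gamma}_s(\delta) = 0 < \varepsilon$ as soon as $s\pi/2 < \delta < s\pi/2 + L_s$, which holds for all sufficiently small $s$ (since $L_s \to +\infty$ as $s \to 0^+$).

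The only defect of $\widetilde\gamma_s$ is that $\ddot{\widetilde\gamma}_s$ has jump discontinuities at the two interior corners $s\pi/2$ and $s\pi/2 + L_s$, so that $\widetilde\gamma_s$ is $C^1$ but not $C^\infty$. I would smooth them by convolving $\ddot{\widetilde\gamma}_s$ with a nonnegative mollifier $\eta_\rho$ of radius $\rho > 0$ and integrating twice using the original initial data $\gamma(0) = 0$, $\dot\gamma(0) = 1$. Choosing $\rho$ strictly smaller than $\min(s\pi/2,\, L_s/2,\, \delta - s\pi/2)$ keeps the smoothing region disjoint from the poles and from $t = \delta$, so the pole conditions are preserved verbatim, convexity $\ddot\gamma \leq 0$ is preserved (the mollification of a nonpositive function by a nonnegative kernel is nonpositive), and the value $\dot\gamma(\delta) = 0 < \varepsilon$ is untouched.

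The smoothing perturbs the total area by an amount $O(\rho)$ and it also perturbs the endpoint values $\gamma(\ell_\gamma)$ and $\dot\gamma(\ell_\gamma)$ by $O(\rho)$. I would restore $\int \gamma\,dt = 2$, $\gamma(\ell_\gamma)=0$ and $\dot\gamma(\ell_\gamma)=-1$ simultaneously by adjusting the length $L_s$ of the cylinder and translating the final cap by $O(\rho)$ before the smoothing is performed; because all three conditions depend continuously and transversally on these two parameters together with $\rho$, an implicit-function-type argument provides an unambiguous correction for every sufficiently small $\rho$, and it does not move $s\pi/2 + L_s$ past $\delta$. The resulting smooth function is the desired $\gamma_{\delta,\varepsilon}$. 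The main technical point, and in my view the only real obstacle, is this joint calibration of $s$, $\rho$ and the area/endpoint corrections: they must be chosen in order, with each successive smallness depending on the previous, so that the smoothing and the re-normalization do not destroy the inequality $\dot\gamma_{\delta,\varepsilon}(\delta) < \varepsilon$. Given the abundant room in the cylindrical region as $s \to 0$, this is a somewhat delicate but elementary quantitative exercise.
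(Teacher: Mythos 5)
Your geometric idea is the same as the paper's---concentrate the curvature in two small caps joined by a long, thin middle piece---but the implementation is genuinely different. The paper starts with the round sphere of radius $a$ chosen so that $\dot\gamma_a(-\tfrac{\pi a}{2}+\delta)=\cos(\delta/a)<\varepsilon$, and then \emph{stretches} the middle via an explicit one-parameter family of diffeomorphisms $F_C$ built as the time-$C$ flow of a vector field; the reparametrized profile $\gamma_a\circ F_C^{-1}$ is automatically smooth, its convexity follows from the sign conditions imposed on $\ddot F_C$, and the correct area is obtained by an intermediate-value argument since $\int\gamma_a^C\to+\infty$ as $C\to+\infty$. You instead build a sphere--cylinder--sphere profile whose area is \emph{exactly} $2$ by construction, but which is only $C^1$, and then mollify the two interior corners. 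Each route buys something: the paper never has to smooth, at the price of solving an implicit scalar equation for $C$; you have an explicit formula from the start, at the price of a delicate smoothing-and-renormalization step.

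There is one concrete inaccuracy in your argument that needs repair. You claim that because the mollification window is disjoint from $t=\delta$, ``the value $\dot\gamma(\delta)=0<\varepsilon$ is untouched.'' This is false: after replacing $\ddot{\widetilde\gamma}_s$ by a smooth $h$ agreeing with it outside a $\rho$-window around the first corner $a_1=s\pi/2$, one has, for every $t>a_1+\rho$,
\[
\dot\gamma_{\mathrm{new}}(t)=\dot{\widetilde\gamma}_s(t)+\int_{a_1-\rho}^{a_1+\rho}\bigl(h-\ddot{\widetilde\gamma}_s\bigr),
\]
and the correction term is generically nonzero, of size $O(\rho/s)$ since $|\ddot{\widetilde\gamma}_s|\le 1/s$ on the cap. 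So $\dot\gamma(\delta)$ does move; it merely moves by an amount you can make as small as you like by shrinking $\rho$ (with $s$ fixed), which you must then track through the renormalization step. Relatedly, a literal global convolution $\eta_\rho*\ddot{\widetilde\gamma}_s$ is not well defined near $t=0$ and $t=\ell_\gamma$ and would alter the sine and cosine pieces everywhere (they are not affine), so the mollification really has to be a \emph{local} modification supported near the two interior corners, as your choice of $\rho$ suggests you intend. Finally, the re-calibration you sketch has two free parameters (cylinder length, position of the second cap) against several constraints; the constraints $\gamma(\ell_\gamma)=0$ and $\dot\gamma(\ell_\gamma)=-1$ are automatic as long as the terminal piece remains a genuine unmodified spherical cap, so in effect you must match the ``incoming'' value of $\dot\gamma$ at the start of the second cap (one equation) and the total area (one equation), which is consistent with two parameters. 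Spelling this out would close the gap; as written, the proof is morally correct but the smoothing step is stated with an incorrect claim and the matching is left to the reader.
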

\begin{proof}
Given $\delta$ and $\varepsilon$, we find $a\in(\frac{2\delta}{\pi},1)$ such that $0<\sin\left(\frac{\delta}{a}\right)<\varepsilon$. This is achieved by taking $\frac{\delta}{a}$ very close to $\pi/2$ from below. Consider the profile function $\gamma_{a}:[-\frac{\pi a}{2},\frac{\pi a}{2}]\rightarrow[0,a]$ of a round sphere of radius $a$, where the domain is taken to be symmetric to zero to ease the following notation. It is defined by $\gamma_a(t)=a\cos(\frac{t}{a})$. Then, $\dot{\gamma}_a(-\frac{\pi a}{2}+\delta)=\cos\left(\frac{\delta}{a}\right)<\varepsilon$ and so, up to shifting the domain again, Inequality \eqref{inelem} is satisfied. However $\gamma_a$ is not normalized since
\begin{equation}
\int_{-\frac{\pi a}{2}}^{\frac{\pi a}{2}}\gamma_a(t)dt=2a^2<2. 
\end{equation}
In order to get the normalization in such a way that Inequality \eqref{inelem} is not spoiled, we are going to stretch the sphere in the interval $(-(\frac{\pi a}{2}-\delta),\frac{\pi a}{2}-\delta)$.

We claim that, for every $C>0$ there exists a diffeomorphism $F_C:\mathbb R\rightarrow\mathbb R$ with the property that
\begin{itemize}
 \item it is odd: $\forall t\in\mathbb R,\ F_C(t)=-F_C(-t)$;
 \item for $t\geq \frac{\pi a}{2}-\delta$, $F_C(t)=t+C$ and for $t\leq -(\frac{\pi a}{2}-\delta)$, $F_C(t)=t-C$;
 \item $\dot{F}_C\geq1$;
 \item for $t\leq0$, $\ddot{F}_C(t)\geq0$ and for $t\geq0$, $\ddot{F}_C(t)\leq0$. 
\end{itemize}
Such a map can be constructed as a time $C$ flow map $\Phi^\psi_C$, where $\psi:\mathbb R\rightarrow\mathbb R$ is an odd increasing function such that, for $t\geq \frac{\pi a}{2}-\delta$, $\psi(t)=1$.

Consider the function $\gamma^C_a:[-C-\frac{\pi a}{2},C+\frac{\pi a}{2}]\rightarrow\mathbb R$, where $\gamma^C_a(s):=\gamma_a(F^{-1}_C(s))$. One can check that $\gamma^C_a$ (up to a shift of the domain) is a profile function satisfying the convexity conditions and for which \eqref{inelem} holds. To finish the proof it is enough to find a positive real number $C_2$ such that $\int_\mathbb R\gamma^{C_2}_a=2$. Since we know that $\gamma^{0}_a=\gamma_a$ and $\int_\mathbb R\gamma_a<2$, it suffices to show that
\begin{equation}
\lim_{C\rightarrow+\infty}\int_\mathbb R\gamma^{C}_a(s)ds=+\infty.
\end{equation}
Observe that $b:=\gamma_a^C(-C-\frac{\pi a}{2}+\delta)=\gamma_a(-\frac{\pi a}{2}+\delta)=a\sin(\frac{\delta}{a})>0$. Then, for $s\in[-C-\frac{\pi a}{2}+\delta,C+\frac{\pi a}{2}-\delta]$, $\gamma_a^C(s)\geq b$ and we have the lower bound
\begin{equation}
\int_\mathbb R\gamma^{C}_a(s)ds\geq \int_{-C-\frac{\pi a}{2}+\delta}^{C+\frac{\pi a}{2}-\delta}\gamma^{C}_a(s)ds\geq\int_{-C-\frac{\pi a}{2}+\delta}^{C+\frac{\pi a}{2}-\delta}\!b\, ds=2(C+\frac{\pi a}{2}-\delta)b.
\end{equation}
The last quantity tends to infinity as $C$ tends to infinity.
\end{proof}
\begin{proof}[Proof of Proposition \ref{bigm}]
Fix an $\varepsilon_0<1$. Take any $\delta<\sqrt{1-\varepsilon_0}$ and consider the normalized profile function $\gamma_{\delta,\varepsilon_0}$ given by the lemma. We know that
\begin{equation}
\gamma_{\delta,\varepsilon_0}(\delta)=\int_0^\delta\dot{\gamma}_{\delta,\varepsilon_0}(t)dt\leq\int_0^\delta1\, dt=\delta. 
\end{equation}
In the same way we find $\Gamma_{\delta,\varepsilon_0}(\delta)\leq -1+\delta^2$. From these two inequalities we get
\begin{equation}
\Gamma_{\delta,\varepsilon_0}(\delta)+\dot{\gamma}_{\delta,\varepsilon_0}(\delta)<-1+\delta^2+\varepsilon_0<0.
\end{equation}
This yields the following lower bound for $m_{\gamma_{\delta,\varepsilon_0}}$:
\begin{equation}
m_{\gamma_{\delta,\varepsilon_0}}\geq\left|\frac{\Gamma_{\delta,\varepsilon_0}(\delta)+\dot{\gamma}_{\delta,\varepsilon_0}(\delta)}{\gamma_{\delta,\varepsilon_0}(\delta)}\right|\geq \frac{1-\varepsilon_0}{\delta}+\delta.
\end{equation}
The proposition is proven taking $\delta$ small enough.
\end{proof}
To sum up, we saw that the rotational symmetry gives us a good understanding of the set $\mathcal C_\gamma$. Understanding the set $\op{Con}_\gamma$ is more subtle. In the next subsection we perform this task only numerically and when $K_m>0$. As a first step, we will briefly study the symplectic reduction associated to the symmetry and the reduced dynamics in this case (for the general theory of symplectic reduction we refer to \cite{abm}). Proposition \ref{paract} and the numerical computation outlined in Section \ref{sub_act} suggest that, if $K_m>0$, the contact property holds. To complete the picture, we show in Proposition \ref{nonnec} that the assumption on the magnetic curvature is not necessary and in Proposition \ref{noncon} that there are cases where the magnetic curvature is not positive and that are not of contact type.

\subsection{The symplectic reduction}
As a first step we observe that the flow $\Phi^{\partial_\theta}$ lifts to a flow $d\Phi^{\partial_\theta}$ on $SS^2_\gamma$. Since $\Phi^{\partial_\theta}$ is a flow of isometries, $d\Phi^{\partial_\theta}_{\theta'}$ in coordinates is simply translation in the variable $\theta$:
$d\Phi^{\partial_\theta}_{\theta'}(t,\varphi,\theta)=(t,\varphi,\theta+\theta')$. Hence, $d\Phi^{\partial_\theta}$ is generated by $\widehat{\partial}_\theta$. As the flow $\Phi^{\widehat{\partial}_\theta}=d\Phi^{\partial_\theta}$ is $2\pi$-periodic and acts freely on $SS^2_\gamma$, we can take its quotient $\widehat{SS^2_\gamma}$ with respect to this $\mathbb R/2\pi\mathbb Z$-action. Furthermore, the quotient map $\widehat{\pi}:SS^2_\gamma\rightarrow \widehat{SS^2_\gamma}$ is a submersion. The variables $t$ and $\varphi$ descend to coordinates defined on $\widehat{SS^2_\gamma}$ minus two points, which are the fibers of the unit tangent bundle over the south and north pole. In these coordinates we simply have $\widehat{\pi}(t,\varphi,\theta)=(t,\varphi)$. In particular, $\widehat{SS^2_\gamma}$ is diffeomorphic to a $2$-sphere.

Any $\tau\in\Omega^k_{\widehat{\partial}_\theta}(SS^2_\gamma)$ such that $\imath_{\widehat{\partial}_\theta}\tau=0$ passes to the quotient and yields a well-defined form on $\widehat{SS^2_\gamma}$. The $2$-form $\imath_{\widehat{\partial}_\theta}\nu_\gamma$ falls into this class and, hence, there exists $\Theta_\gamma\in\Omega^2(\widehat{SS^2_\gamma})$ such that $\imath_{\widehat{\partial}_\theta}\nu_\gamma=\widehat{\pi}^*\Theta_\gamma$. Moreover, the form $\Theta_\gamma$ is symplectic on $\widehat{SS^2_\gamma}$. On the other hand, $X^m_\gamma$ is also $\widehat{\partial}_\theta$-invariant thanks to Equation \eqref{relaz1}. So there exists $\widehat{X}^m_\gamma\in\Gamma(\widehat{SS^2_\gamma})$ such that $d\widehat{\pi}(X^m_\gamma)=\widehat{X}^m_\gamma$. We claim that this new vector field is $\Theta_\gamma$-Hamiltonian. First, notice that if $\beta^\gamma=\beta_\theta d\theta$ is as defined in Proposition \ref{estm}, $\tau^{m,\beta^\gamma}\in\Omega^1_{\widehat{\partial}_\theta}(SS^2_\gamma)$. Second, using Cartan's identity, we have
\begin{equation}\label{chain}
\imath_{X^m_\gamma}(\imath_{\widehat{\partial}_\theta}\nu_\gamma)=-\imath_{\widehat{\partial}_\theta}(d\tau^{m,\beta^\gamma})=-\mathcal L_{\widehat{\partial}_\theta}\tau^{m,\beta^\gamma}+d\big(\imath_{\widehat{\partial}_\theta}\tau^{m,\beta^\gamma}\big)=d\big(\tau^{m,\beta^\gamma}(\widehat{\partial}_\theta)\big).
\end{equation}
Define $I_{m,\gamma}:=\tau^{m,\beta^\gamma}(\widehat{\partial}_\theta)$. Since $I_{m,\gamma}$ is $\widehat{\partial}_\theta$-invariant, there exists $\widehat{I}_{m,\gamma}:\widehat{SS^2_\gamma}\rightarrow\mathbb R$ such that $I_{m,\gamma}=\widehat{\pi}^*\widehat{I}_{m,\gamma}$. 
Thus, $I_{m,\gamma}$ is an integral of motion for $X^m_\gamma$ and, reducing the equality \eqref{chain} to $\widehat{SS^2_\gamma}$, we find that $\widehat{X}^m_\gamma$ is the $\Theta_\gamma$-Hamiltonian vector field generated by $-\widehat{I}_{m,\gamma}$.  

Using Equation \eqref{relaz2}, we find the coordinate expression
\begin{equation}
\widehat{I}_{m,\gamma}(t,\varphi)=m\gamma(t)\sin\varphi-\Gamma(t).
\end{equation}

Let us consider now the two auxiliary functions $\widehat{I}^\pm_{m,\gamma}:[0,\ell_\gamma]\rightarrow \R$ defined by $\widehat{I}^\pm_{m,\gamma}(t):=\widehat{I}_{m,\gamma}(t,\pm \pi/2)=\pm m\gamma(t)-\Gamma(t)$. We know that
\begin{equation}
\widehat{I}^-_{m,\gamma}(t)\leq \widehat{I}_{m,\gamma}(t,\varphi)\leq \widehat{I}^+_{m,\gamma}(t),
\end{equation}
with equalities if and only if $\varphi=\pm\pi/2$. On the one hand, we have $\widehat{I}^+_{m,\gamma}\geq-1$ and $\widehat{I}^+_{m,\gamma}(t)=-1$ if and only if $t=\ell_\gamma$. On the other hand, $\widehat{I}^+_{m,\gamma}$ attains its maximum in the interior. Indeed,
\begin{equation}\label{derI}
\frac{d}{dt}\widehat{I}^+_{m,\gamma}=m\dot{\gamma}-\gamma.
\end{equation}
and so $\frac{d}{dt}\widehat{I}^+_{m,\gamma}(0)=m>0$. Since $\widehat{I}^+_{m,\gamma}(0)=1$, the maximum is also strictly bigger than $1$. A similar argument tells us that the maximum of $\widehat{I}^-_{m,\gamma}$ is $1$ and it is attained at $0$ and the minimum of $\widehat{I}^-_{m,\gamma}$ is strictly less than $-1$ and it is attained in $(0,\ell_\gamma)$. 
As a consequence, $\max \widehat{I}_{m,\gamma}=\max \widehat{I}^+_{m,\gamma}>1$ and $\min \widehat{I}_{m,\gamma}=\min \widehat{I}^-_{m,\gamma}<-1$.

In the next proposition we deal with critical points of $\widehat{I}_{m,\gamma}$. In particular, we show that, if $K_m>0$, the only critical points are the maximizer and the minimizer (which are unique). In this case the dynamics of $\widehat{X}^m_\gamma$ is very simple: besides the two rest points, all other orbits are periodic and wind once in the complement of these two points.
\begin{prp}\label{reddyn}
If we denote by $\widehat{\op{Crit}}^\pm_{m,\gamma}$ the critical points of $\widehat{I}^\pm_{m,\gamma}$ and by $\widehat{\op{Crit}}_{m,\gamma}$ those of $\widehat{I}_{m,\gamma}$, we have
\begin{equation}\label{critsub}
\widehat{\op{Crit}}_{m,\gamma}=\widehat{\op{Crit}}^-_{m,\gamma}\times\left\{-\frac{\pi}{2}\right\}\bigcup\, \widehat{\op{Crit}}^+_{m,\gamma}\times\left\{+\frac{\pi}{2}\right\}.
\end{equation}
Moreover, $t\in \widehat{\op{Crit}}^\pm_{m,\gamma}$ if and only if
\begin{equation}\label{creq}
\pm m\dot{\gamma}(t)=\gamma(t).
\end{equation}
In this case $\{(t,\pm\pi/2,\theta)\ |\ \theta\in\mathbb R/2\pi\mathbb Z\}$ is the support of a closed orbit for $X^m_\gamma$. These are exactly the periodic orbits whose projection to $S^2_\gamma$ is a latitude.

Finally, if $K_m>0$, $\widehat{\op{Crit}}^\pm_{m,\gamma}$ contains only the absolute maximizer (respectively minimizer) of $\widehat{I}^\pm_{m,\gamma}$. We denote this unique element by $\widehat{t}^\pm_{m,\gamma}$. The complement of $\{(\widehat{t}^-_{m,\gamma},-\pi/2),(\widehat{t}^+_{m,\gamma},\pi/2)\}$ in $\widehat{SS^2_\gamma}$ is foliated by closed orbits of $\widehat{X}^m_\gamma$ and the complement of $\{(\widehat{t}^-_{m,\gamma},-\pi/2,\theta),(\widehat{t}^+_{m,\gamma},\pi/2,\theta)\}$ in $SS^2_\gamma$ is foliated by $X^m_\gamma$-invariant tori. 
\end{prp}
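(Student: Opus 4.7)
The proof splits naturally into three pieces: identifying the critical set of $\widehat I_{m,\gamma}$ together with its relation to latitude orbits of $X^m_\gamma$, establishing uniqueness of the critical points under $K_m>0$, and finally describing the foliation. The first two are direct coordinate computations. Working in the $(t,\varphi)$ chart on $\widehat{SS^2_\gamma}$, I differentiate $\widehat I_{m,\gamma}(t,\varphi)=m\gamma(t)\sin\varphi-\Gamma(t)$ (using $\dot\Gamma=\gamma$) to obtain $\partial_\varphi \widehat I_{m,\gamma}=m\gamma\cos\varphi$ and $\partial_t \widehat I_{m,\gamma}=m\dot\gamma\sin\varphi-\gamma$. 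Since $\gamma>0$ in the interior, $\partial_\varphi=0$ forces $\varphi=\pm\pi/2$, and then $\partial_t=0$ reduces to $\pm m\dot\gamma=\gamma$; this yields both \eqref{critsub} and \eqref{creq}, after noting that the two polar points of $\widehat{SS^2_\gamma}$ are not critical (they carry the values $\widehat I_{m,\gamma}=\pm1$, while the proposition's preamble gives $\max\widehat I_{m,\gamma}>1$ and $\min\widehat I_{m,\gamma}<-1$, and a short local analysis rules out a saddle there). For the latitude statement I would expand
\[
X^m_\gamma=m\cos\varphi\,\widehat\partial_t+\Bigl(1-\tfrac{m\dot\gamma\sin\varphi}{\gamma}\Bigr)\partial_\varphi+\tfrac{m\sin\varphi}{\gamma}\widehat\partial_\theta
\]
using $X^m_\gamma=mX+V$ and \eqref{relaz1}; a latitude $\{t=t_0,\,\varphi=\varphi_0\}$ is an orbit iff both the $\widehat\partial_t$ and $\partial_\varphi$ components vanish, which is exactly $\cos\varphi_0=0$ together with $\gamma(t_0)=\pm m\dot\gamma(t_0)$, and the remaining $\widehat\partial_\theta$-component $m/\gamma(t_0)$ is nonzero.

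The main step is the uniqueness of interior critical points under $K_m>0$. Because $f\equiv1$ for the system $(S^2_\gamma,g_\gamma,\mu_\gamma)$, one has $K_m=m^2K+1=1-m^2\ddot\gamma/\gamma$. Differentiating $\frac{d}{dt}\widehat I^+_{m,\gamma}=m\dot\gamma-\gamma$ and substituting $\dot\gamma=\gamma/m$ at a critical point gives
\[
\frac{d^2}{dt^2}\widehat I^+_{m,\gamma}=m\ddot\gamma-\dot\gamma=\frac{m^2\ddot\gamma-\gamma}{m}=-\frac{\gamma K_m}{m}<0,
\]
so every interior critical point of $\widehat I^+_{m,\gamma}$ is a strict local maximum. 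Combined with the already recorded boundary data $\widehat I^+_{m,\gamma}(0)=1$, $\widehat I^+_{m,\gamma}(\ell_\gamma)=-1$ and the boundary derivatives $\pm m$, a Rolle-type argument excludes two interior local maxima (between them there would have to be a local minimum, where the second derivative is $\geq0$, contradicting our sign computation). Hence $\widehat{\op{Crit}}^+_{m,\gamma}$ consists of a single point, the absolute maximizer $\widehat t^+_{m,\gamma}$. The argument for $\widehat I^-_{m,\gamma}$ is identical up to signs.

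The foliation statement then follows from a standard Morse-theoretic fact: a smooth function on $\widehat{SS^2_\gamma}\cong S^2$ with exactly two non-degenerate critical points, one max and one min, has every regular level set connected and diffeomorphic to $S^1$. These circles are orbits of the $\Theta_\gamma$-Hamiltonian vector field $\widehat X^m_\gamma$, and pulling them back through the $S^1$-bundle submersion $\widehat\pi:SS^2_\gamma\to\widehat{SS^2_\gamma}$ produces a foliation of the complement of the two exceptional fibers by $2$-tori, each a level set of the first integral $I_{m,\gamma}$ and therefore $X^m_\gamma$-invariant. The only real difficulty is the middle step: recognizing that $K_m$ is precisely the scalar controlling the sign of the second derivative of the reduced Hamiltonian at its critical points, which is what makes the magnetic-curvature hypothesis the natural one here.
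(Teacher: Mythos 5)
Your proof is correct and follows essentially the same route as the paper's: identify the critical set via the $\partial_\varphi$ and $\partial_t$ derivatives, observe that $\widehat I^\pm_{m,\gamma}$ has negative second derivative at every critical point (your rewriting in terms of $K_m$ is exactly the factorization the paper writes as $-m\gamma(K+1/m^2)$, which equals $-\gamma K_m/m$ when $f\equiv 1$), and read the foliation off the structure of the reduced Hamiltonian. You add two small details the paper passes over silently — verifying that the two pole-type points of $\widehat{SS^2_\gamma}$ are not critical, and invoking the Morse-theoretic fact that regular level sets are connected circles — which are harmless embellishments rather than a genuinely different argument; note that connectedness is not actually needed for the foliation claim, since the paper's implicit-function-theorem argument already yields that each component of a regular level set is a circle (resp.\ a torus, since it carries the nowhere-vanishing tangent field $X^m_\gamma$), and the foliation statement only requires each leaf to be a closed orbit.
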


\begin{proof}
Identity \ref{critsub} follows from the fact that $\partial_\varphi \widehat{I}_{m,\gamma}=0$ if and only if $\varphi=\pm\pi/2$. Recalling Equation \eqref{derI} we see that \eqref{creq} is exactly the equation for the critical points of $\widehat{I}^\pm_{m,\gamma}$. The statement about the relation between closed orbits of $X^m_\gamma$ and latitudes follows from the fact that at a critical point of $\widehat{I}_{m,\gamma}$, $\widehat{X}^m_\gamma=0$. Hence, on its preimage $X^m_\gamma$ is parallel to $\widehat{\partial}_\theta$. By the implicit function theorem the regular level sets of $\widehat{I}_{m,\gamma}$ and $I_{m,\gamma}$ are closed submanifolds of codimension $1$. In the latter case they are tori since $X^m_\gamma$ is tangent to them and nowhere vanishing. 

We prove now uniqueness under the hypothesis on the curvature. We carry out the computations for $\widehat{I}^+_{m,\gamma}$ only. To prove that the absolute maximizer is the only critical point, we show that if $t_0$ is critical, the function is concave at $t_0$. Indeed,
\begin{eqnarray*}
\frac{d^2}{dt^2}\widehat{I}^+_{m,\gamma}(t_0)=m\ddot{\gamma}(t_0)-\dot{\gamma}(t_0)&=&m\gamma(t_0)\left(\frac{\ddot{\gamma}(t_0)}{\gamma(t_0)}-\frac{\dot{\gamma}(t_0)}{m\gamma(t_0)}\right)\\
&=&-m\gamma(t_0)\left(K(t_0)+\frac{1}{m^2}\right)<0.
\end{eqnarray*}
\end{proof}
The picture above shows qualitatively $\widehat{I}^-_{m,\gamma}$ and $\widehat{I}^+_{m,\gamma}$ when $K_m>0$.
\begin{figure}
\includegraphics[width=4in]{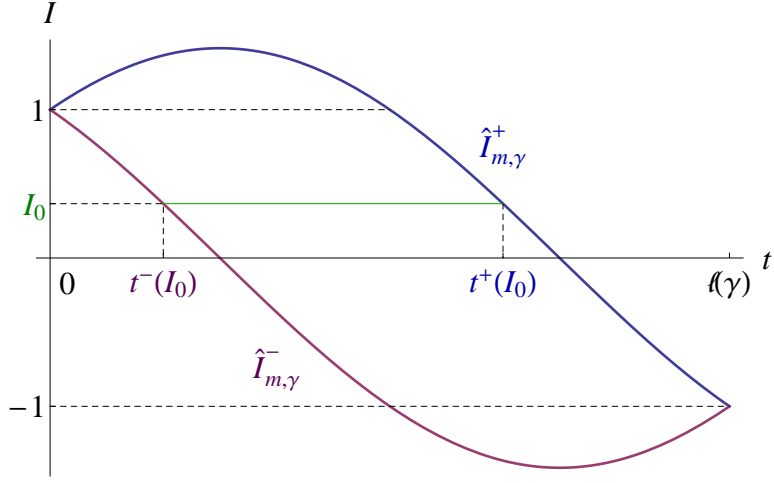}
\caption{Graphs of the functions $I^-_{m,\gamma}$ and $I^+_{m,\gamma}$}\label{pic}
\end{figure} 

In order to decide whether $\omega^\gamma_m$ is of contact type or not, the first thing to do is to compute the action of latitudes. We do this in the next proposition.
\begin{prp}\label{paract}
Take $t_0\in(0,\ell_\gamma)$ such that $\dot{\gamma}(t_0)\neq0$ and let $m_{t_0}:=\left|\frac{\gamma(t_0)}{\dot{\gamma}(t_0)}\right|$. The lift of the latitude curve $\{t=t_0\}$ parametrized by arc length and oriented by $\big(\op{sign}\dot{\gamma}(t_0)\big)\partial_\theta$ is the support of a periodic orbit for $X^{m_{t_0}}_\gamma$. We call the associated invariant probability measure $\zeta_{t_0}$. Its action is given by
\begin{equation}\label{acpar}
\mathcal A(\zeta_{t_0})=\frac{\gamma(t_0)^2-\dot{\gamma}(t_0)\Gamma(t_0)}{\dot{\gamma}(t_0)^2}
\end{equation}
and $I_{m_{t_0},\gamma}\big|_{\op{supp}\zeta_{t_0}}=\dot{\gamma}(t_0)\mathcal A(\zeta_{t_0})$. In particular, $\mathcal A(\zeta_{t_0})>0$ when $K_{m_{t_0}}>0$.
\end{prp}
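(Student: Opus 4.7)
The plan has four ingredients, all of which reduce to using the coordinate formulas already collected in the paper and then invoking Proposition~\ref{reddyn}.

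First, I would identify the orbit. By plugging the formulas~(\ref{relaz1}) into the definition $X^m_\gamma = mX + V$ (recall $f\equiv 1$ here), one obtains a coordinate expression
\[
X^m_\gamma = m\cos\varphi\,\widehat{\partial}_t + \Bigl(1 - \tfrac{m\dot\gamma\sin\varphi}{\gamma}\Bigr)\partial_\varphi + \tfrac{m\sin\varphi}{\gamma}\widehat{\partial}_\theta.
\]
The curve $\{t=t_0,\ \varphi = \operatorname{sign}\dot\gamma(t_0)\cdot\pi/2\}$ is preserved by the flow iff both the $\widehat\partial_t$- and $\partial_\varphi$-components vanish; the first gives $\cos\varphi=0$, the second then forces $m = \gamma(t_0)/(\sin\varphi\,\dot\gamma(t_0))$. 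Taking the absolute value yields $m=m_{t_0}$ and pins down the sign of $\sin\varphi$. On this orbit $X^{m_{t_0}}_\gamma$ reduces to $\dot\gamma(t_0)^{-1}\,\widehat\partial_\theta$, which confirms the claim about orientation and arc-length parametrization.

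Second, I would compute the action using the contact primitive $\tau^{m,\beta^\gamma}$ produced by Proposition~\ref{estm}. Since $\beta^\gamma = \beta^\gamma_\theta(t)\,d\theta$ is $\partial_\theta$-invariant, the function $h_{m_{t_0},\beta^\gamma} = \tau^{m_{t_0},\beta^\gamma}(X^{m_{t_0}}_\gamma)$ is constant along the orbit, so $\mathcal A(\zeta_{t_0})$ equals its value at any point. Using $\alpha(X^m)=m$, $\psi(X^m)=1$, and $\pi^*\beta^\gamma(X^m_\gamma)_{(x,v)} = m\beta^\gamma_x(v)$, one gets
\[
h_{m_{t_0},\beta^\gamma} = m_{t_0}^2 - m_{t_0}\beta^\gamma_x(v) + 1,
\]
and the assignment $v=(0,\operatorname{sign}\dot\gamma(t_0)/\gamma(t_0))$ plus $\beta^\gamma_\theta = \Gamma+\dot\gamma$ turn this into formula~(\ref{acpar}) after a short simplification. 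Evaluating $I_{m_{t_0},\gamma}(t,\varphi) = m\gamma\sin\varphi - \Gamma$ at $\varphi=\pm\pi/2$, $t=t_0$, one sees directly that $I_{m_{t_0},\gamma}|_{\operatorname{supp}\zeta_{t_0}} = \dot\gamma(t_0)\mathcal A(\zeta_{t_0})$.

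The last point is the sign under the curvature hypothesis, and this is where I would lean on Proposition~\ref{reddyn} rather than attempt a direct analytic argument on $\gamma^2-\dot\gamma\Gamma$. Equation~(\ref{creq}) is exactly the defining equation for $m_{t_0}$, so $t_0$ lies in $\widehat{\operatorname{Crit}}^{\operatorname{sign}\dot\gamma(t_0)}_{m_{t_0},\gamma}$. If $K_{m_{t_0}}>0$, Proposition~\ref{reddyn} says this critical set has a unique element and it is the absolute maximizer of $\widehat I^+$ (resp.\ minimizer of $\widehat I^-$). Combined with the fact, recorded just before the statement of Proposition~\ref{reddyn}, that $\max\widehat I^+_{m,\gamma}>1$ and $\min\widehat I^-_{m,\gamma}<-1$, one concludes that $I_{m_{t_0},\gamma}|_{\operatorname{supp}\zeta_{t_0}}$ and $\dot\gamma(t_0)$ have the same sign. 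Dividing gives $\mathcal A(\zeta_{t_0})>0$.

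The main obstacle will not be any of the above bookkeeping, which is essentially dictated by (\ref{relaz1})--(\ref{relaz2}) and the formula for $\tau^{m,\beta}$; it will rather be to verify cleanly that the formulas one writes with the $\pm\pi/2$ convention match the orientation conventions already fixed, so that the sign of $\dot\gamma(t_0)$ propagates consistently through the identification of $\varphi_0$, the expression for $I_{m_{t_0},\gamma}$ on the orbit, and the final positivity argument. Once the signs are pinned down at one point, everything else is algebraic.
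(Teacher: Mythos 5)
Your proposal follows the same route as the paper's proof: identify the latitude orbit (the paper simply cites Proposition~\ref{reddyn} where you unwind the coordinate formula~\eqref{relaz1} explicitly, but the content is the same), compute the action by evaluating $h_{m_{t_0},\beta^\gamma}=\tau^{m_{t_0},\beta^\gamma}(X^{m_{t_0}}_\gamma)$ on the orbit using $m_{t_0}v_\theta=1/\dot\gamma(t_0)$ and $\beta^\gamma_\theta=\Gamma+\dot\gamma$, then compute $I_{m_{t_0},\gamma}$ on the support, and deduce positivity from the fact that, under $K_{m_{t_0}}>0$, the orbit realizes the maximum of $\widehat I^+$ or the minimum of $\widehat I^-$, whose signs are controlled. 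The bookkeeping checks out and the sign-tracking via $\op{sign}\dot\gamma(t_0)$ is handled exactly as in the paper, so this is correct and essentially identical in approach.
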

\begin{proof}
The curve $s\mapsto\left(t_0,\op{sign}\dot{\gamma}(t_0)\frac{\pi}{2},\frac{m\op{sign}\dot{\gamma}(t_0)}{\gamma(t_0)}s\right)$ is a periodic orbit by the previous proposition. On the support of this orbit we have
\begin{equation}
m_{t_0}v_\theta=\left|\frac{\gamma(t_0)}{\dot{\gamma}(t_0)}\right|\frac{\op{sign}\dot{\gamma}(t_0)}{\gamma(t_0)}=\frac{1}{\dot{\gamma}(t_0)} 
\end{equation}
and, as a consequence,
\begin{equation}
\tau^\gamma_{m_{t_0}}(X^{m_{t_0}}_\gamma)=\frac{\gamma(t_0)^2}{\dot{\gamma}(t_0)^2}-\beta^\gamma_\theta\frac{1}{\dot{\gamma}(t_0)}+1=\frac{\gamma(t_0)^2-\dot{\gamma}(t_0)\Gamma(t_0)}{\dot{\gamma}(t_0)^2}.
\end{equation}
Since this is a constant, we get the identity \eqref{acpar} for the action.

The second identity is proved using the definition of $I_{m_{t_0},\gamma}$:
\begin{equation*}
I_{m_{t_0},\gamma}\big|_{\op{supp}\zeta_{t_0}}=\left|\frac{\gamma(t_0)}{\dot{\gamma}(t_0)}\right|\gamma(t_0)\op{sign}\dot{\gamma}(t_0)-\Gamma(t_0)=\dot{\gamma}(t_0)\frac{\gamma(t_0)^2-\dot{\gamma}(t_0)\Gamma(t_0)}{\dot{\gamma}(t_0)^2}.
\end{equation*}
Under the curvature assumption, $I_{m_{t_0},\gamma}$ is maximized or minimized at $\op{supp}\zeta_{t_0}$ according to the sign of $\dot{\gamma}(t_0)$. In both cases $I_{m_{t_0},\gamma}\big|_{\op{supp}\zeta_{t_0}}$ and $\dot{\gamma}(t_0)$ have the same sign. Hence, also the third statement is proved. 
\end{proof}
This proposition shows that, when $K_m>0$, the action of the periodic orbits that project to latitudes is not an obstruction for $\omega^\gamma_m$ to be of contact type. Thus, as we also discuss in the next subsection, one could conjecture that under this hypothesis $\omega^\gamma_m$ is of contact type. On the other hand, we claim that having $\inf K+1/m^2>0$ is not a necessary condition for the contact property to hold. For this purpose it is enough to exhibit a non-convex surface for which $m_\gamma<2$. This can be achieved as a consequence of the fact that the curvature depends on the second derivative of $\gamma$, whereas $m_\gamma$ depends only on the first derivative.

We can start from $S^2_{\gamma_0}$, the round sphere of radius $1$, and find a non-convex surface of revolution $S^2_\gamma$, which is $C^1$-close to the sphere and coincides with it around the poles. This implies that $m_\gamma=\sup_{t\in[0,\ell_\gamma]}\left|\frac{\Gamma(t)+\dot{\gamma}(t)}{\gamma(t)}\right|$ can be taken smaller than $2$ since it is close as we like to $m_{\gamma_0}=0$. Hence, every energy level of $(S^2_\gamma,g_\gamma,\mu_\gamma)$ is of contact type and the following proposition is proved.
\begin{prp}\label{nonnec}
The condition $K_m>0$ is not necessary for $\Sigma_{1/2m^2}$ to be of contact type.
\end{prp}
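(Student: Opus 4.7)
The strategy, foreshadowed in the paragraph preceding the statement, is to exhibit a surface of revolution that is not convex (so $K$ becomes negative somewhere, forcing $K_m=m^2K+1$ to be negative for sufficiently large $m$) yet whose invariant $m_\gamma$ is strictly below $2$, so that by the inclusion \eqref{conint1} and Proposition \ref{boundi} every energy level is of contact type. The feasibility of the plan rests on the fact that $K=-\ddot\gamma/\gamma$ depends on $\ddot\gamma$, whereas by Proposition \ref{estm} the invariant $m_\gamma=\sup_t|(\Gamma(t)+\dot\gamma(t))/\gamma(t)|$ depends only on $\dot\gamma$ and its antiderivative $\Gamma$. Thus a $C^1$-small perturbation of a profile can introduce negative curvature without significantly altering $m_\gamma$.

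Concretely, I would start from the round-sphere profile $\gamma_0(t)=\sin t$ on $[0,\pi]$, for which $\dot\gamma_0=\cos t$ and $\Gamma_0=-\cos t$, so $\Gamma_0+\dot\gamma_0\equiv 0$ and $m_{\gamma_0}=0$. Fix an interior point $t^*\in(0,\pi)$ and a small $\varepsilon>0$, and consider perturbations $\gamma_\delta:=\gamma_0+\rho_\delta$, where $\rho_\delta$ is a smooth function supported in $[t^*-\varepsilon,t^*+\varepsilon]$ satisfying
\begin{equation*}
\int_0^\pi\rho_\delta\,dt=0,\qquad \Vert\rho_\delta\Vert_{C^1}\leq\delta,\qquad \ddot\rho_\delta(t^*)\geq 2\sin(t^*).
\end{equation*}
Such $\rho_\delta$ can be built by a standard scaling argument (shrinking amplitude together with support-width at appropriate relative rates) applied to any fixed compactly supported bump having zero integral and nonzero second derivative at the origin. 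Then $\gamma_\delta$ is a normalized profile function coinciding with $\gamma_0$ near both poles, and the inequality $\ddot\gamma_\delta(t^*) = -\sin t^*+\ddot\rho_\delta(t^*)\geq \sin t^*>0$ yields $K(t^*)<0$.

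The next step is to verify that $m_{\gamma_\delta}\to 0$ as $\delta\to 0$. Since $\gamma_\delta\equiv\gamma_0$ outside $[t^*-\varepsilon,t^*+\varepsilon]$, the ratio $(\Gamma_\delta+\dot\gamma_\delta)/\gamma_\delta$ vanishes identically there, while on this interior compact interval $\gamma_\delta$ is uniformly bounded below by a positive constant and the numerator $\Gamma_\delta+\dot\gamma_\delta$ converges uniformly to $0$ with $\delta$. Hence for $\delta$ small enough we have $m_{\gamma_\delta}<2$, and \eqref{conint1} combined with Proposition \ref{boundi} yields $\op{Con}_{\gamma_\delta}=(0,+\infty)$.

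Finally, since $f\equiv 1$ and $df\equiv 0$, the magnetic curvature simplifies to $K_m=m^2K+1$; picking $m_0>1/\sqrt{-K(t^*)}$ makes $K_{m_0}$ strictly negative on the fiber of $SS^2_{\gamma_\delta}$ over $t^*$, while $\Sigma_{m_0^2/2}$ is of contact type. The only mildly delicate step is the simultaneous arrangement of the three constraints on $\rho_\delta$; once this is in hand, the remainder is continuity of $m_\gamma$ in the profile together with results already established.
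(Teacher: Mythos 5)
Your proposal is correct and follows essentially the same route as the paper: perturb the round-sphere profile in a way that is $C^1$-small and supported away from the poles, so $m_\gamma$ stays below $2$ (hence $\op{Con}_\gamma=(0,+\infty)$ by \eqref{conint1} and Proposition \ref{boundi}) while $\ddot\gamma$ — and hence $K$ — becomes positive somewhere, making $K_m<0$ for large $m$. Your write-up is more explicit about the construction of the bump $\rho_\delta$ and the final choice of $m_0$, but the underlying observation that $m_\gamma$ sees only first-order data of $\gamma$ whereas $K_m$ sees second-order data is exactly the paper's.
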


On the other hand, we now show that is not true, in general, that the contact property holds on every energy level.
\begin{prp}\label{noncon}
There exists a symplectic magnetic system $(S^2,g,\sigma)$ that has an energy level not of contact type.
\end{prp}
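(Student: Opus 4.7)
The plan is to apply McDuff's criterion to a suitable non-convex surface of revolution. By Proposition~\ref{nonex}, no EHS $\omega_m$ can be of negative contact type, so by Proposition~\ref{mcdcri} it suffices to exhibit a null-homologous invariant probability measure with non-positive action on some energy level. Since $H_1(SS^2_\gamma,\mathbb R)=0$, every invariant measure is automatically null-homologous, and Proposition~\ref{paract} provides a natural family of candidates indexed by latitudes: the action formula reads
\[
\mathcal A(\zeta_{t_0})=\frac{\gamma(t_0)^2-\dot\gamma(t_0)\Gamma(t_0)}{\dot\gamma(t_0)^2},
\]
so the task reduces to producing a normalized profile function $\gamma$ and an interior point $t_0$ with $\dot\gamma(t_0)\neq 0$ such that $\gamma(t_0)^2<\dot\gamma(t_0)\Gamma(t_0)$. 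For such a $t_0$, $\omega^\gamma_{m_{t_0}}$ fails to be of positive contact type and hence $\Sigma_{m_{t_0}^2/2}$ is not of contact type.

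The construction I would carry out is a ``peanut-shaped'' profile with a narrow neck. Fix a small parameter $\epsilon>0$ and build $\gamma$ in three regions: a spherical-cap-like region on $[0,L_1]$ that accumulates almost the entire integral $\int_0^{\ell_\gamma}\gamma=2$ and descends to height $\epsilon$; a short neck on $[L_1,L_2]$ where $\gamma$ attains a local minimum of size $\approx\epsilon$ and then starts growing again; and a tiny northern cap on $[L_2,\ell_\gamma]$ closing off smoothly at the pole with $\dot\gamma(\ell_\gamma)=-1$ and negligible integral. I would then pick $t_0\in(L_1,L_2)$ just past the neck minimum, so $\gamma(t_0)\sim\epsilon$ while $\dot\gamma(t_0)=\delta$ for some moderate $\delta>0$. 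Because essentially all the area lies south of $t_0$, $\Gamma(t_0)=\int_0^{t_0}\gamma-1$ is close to $1$, and the numerator of the action becomes
\[
\gamma(t_0)^2-\dot\gamma(t_0)\Gamma(t_0)\approx \epsilon^2-\delta<0
\]
as soon as $\epsilon^2<\delta$, which yields $\mathcal A(\zeta_{t_0})<0$ as required.

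The main obstacle is producing an honest normalized profile function realizing this geometry while respecting every defining condition of a profile function: $|\dot\gamma|<1$ everywhere, vanishing of all even derivatives of $\gamma$ at the poles, and the normalization $\int_0^{\ell_\gamma}\gamma=2$. The pinching near the neck forces very negative Gaussian curvature ($K=-\ddot\gamma/\gamma$ becomes large and negative when $\gamma$ is near its small minimum), so the surface is far from convex; however, none of the profile function constraints are violated. One can construct $\gamma$ explicitly by gluing together rescaled spherical caps, a short plateau-like middle section and a narrow final cap, and then smoothing the transitions. The key quantitative freedom is that the neck height $\epsilon$ and the neck slope $\delta$ can be prescribed independently, so $\epsilon^2\ll\delta$ is easy to arrange, which is what forces the action numerator to be negative.
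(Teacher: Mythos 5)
Your strategy is correct, and it reduces the problem to exactly the same two ingredients the paper uses: Proposition~\ref{nonex} (to rule out negative contact type) combined with Proposition~\ref{mcdcri}, and then Proposition~\ref{paract} to compute the action of a latitude. Where you diverge is in the choice of profile function. The paper recycles the stretching construction of Lemma~\ref{bigmlem}: a very small spherical cap of radius $a\approx\delta/\pi$ at the south pole followed by a long cylindrical stretch, and one measures the action of the latitude at height $t=\delta$ sitting near the \emph{top} of the small cap, where $\dot\gamma(\delta)<-\varepsilon$ and $\Gamma(\delta)\le-1+\delta^2$ is close to $-1$. You instead propose a peanut: a large southern bulge carrying almost all the area, a pinched neck, and a tiny northern cap, and you measure just past the neck minimum where $\dot\gamma(t_0)=\delta>0$ and $\Gamma(t_0)$ is close to $+1$. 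In both cases the mechanism is the same sign identity: the numerator $\gamma^2-\dot\gamma\Gamma$ is forced negative because the product $\dot\gamma\Gamma$ is positive (negative times negative in the paper, positive times positive in your version) while $\gamma^2$ is small. The paper's choice buys economy --- the needed $\gamma$ is essentially free after the work already done for Proposition~\ref{bigm}, and the quantitative bound $\mathcal A(\zeta_{\delta,\varepsilon})\le -1/\varepsilon+o(\delta)$ drops out immediately --- whereas your peanut has to be built from scratch. You correctly flag that constructing an honest normalized profile function realizing your geometry (respecting $|\dot\gamma|<1$, vanishing of even derivatives at the poles, and the normalization) is the remaining work; the same gluing-and-stretching toolkit from Lemma~\ref{bigmlem} would close this gap, so I do not regard it as a flaw, but it is the step that remains to be done rigorously.
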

\begin{proof}
We will achieve this goal by finding $m$ and $\gamma$ such that $X^m_\gamma$ has a closed orbit projecting to a latitude with negative action. Then, the proof is complete applying Proposition \ref{mcdcri}. 

Fix some $\varepsilon\in(0,1)$. We claim that, for every $\delta\in(0,\pi/2)$, there exists a normalized profile function $\gamma_{\delta,\varepsilon}$ such that
\begin{equation}\label{strein}
\dot{\gamma}_{\delta,\varepsilon}(\delta)<-\varepsilon.
\end{equation}
Such profile function can be obtained as in Lemma \ref{bigmlem}. Take an $a\in\left(\frac{\delta}{\pi},\frac{\delta}{\pi/2}\right)$ (this is equivalent to $\delta\in\left(\frac{\pi a}{2},\pi a\right)$) such that $-\sin\left(\frac{\delta}{a}-\frac{\pi}{2}\right)<-\varepsilon$. Consider the profile function $\gamma_a:[-\pi a/2,\pi a/2]\rightarrow\mathbb R$ of a round sphere of radius $a$. Thanks to the last inequality, $\dot{\gamma}_a(\delta-\frac{\pi a}{2})<-\varepsilon$ and, therefore, $\gamma_a$ satisfies \eqref{strein} (up to a shift of the domain). Now we stretch an interval compactly supported in $(\delta-\frac{\pi a}{2}, \pi a/2)$ by a family of diffeomorphisms $F_C$ as we did in Lemma \ref{bigmlem} (here the condition on the second derivative of $F_C$ is not necessary). Thus, we obtain a family of profile functions $\gamma^C_a$ satisfying \eqref{strein}. Since the area diverges with $C$, we find $C_2>0$ such that $\gamma^{C_2}_a$ is normalized. This finishes the proof of the claim.

Given $\gamma_{\delta,\varepsilon}$ satisfying \eqref{strein}, we have that $\gamma_{\delta,\varepsilon}(\delta)\leq\delta$ and $\Gamma_{\delta,\varepsilon}(\delta)\leq-1+\delta^2$. The latitude at height $t=\delta$ of such surface is a closed orbit for $X^{m_{\delta,\varepsilon}}_{\gamma_{\delta,\varepsilon}}$, where $m_{\delta,\varepsilon}:=\frac{\gamma_{\delta,\varepsilon}(\delta)}{|\dot{\gamma}_{\delta,\varepsilon}(\delta)|}$. Using formula \eqref{acpar} we see that the action of the corresponding invariant measure $\zeta_{\delta,\varepsilon}$ is negative for $\delta$ small enough:
\begin{equation}
\mathcal A(\zeta_{\delta,\varepsilon})\leq\frac{\delta^2-(-\varepsilon)(-1+\delta^2)}{\varepsilon^2}=-\frac{1}{\varepsilon}+o(\delta)<0.
\end{equation}
\end{proof}

\subsection{Action of ergodic measures}\label{sub_act}
When $K_m>0$, we also have a way to compute numerically the action of ergodic invariant measures. We consider only ergodic measures since they are the extremal points of the set of probability invariant measures by Choquet's Theorem and, therefore, it is enough to check the positivity of the action of these measures, in order to apply Proposition \ref{mcdcri}. Every ergodic measure $\zeta$ is concentrated on a unique level set $\{I_{m,\gamma}=I(\zeta)\}$, for some $I(\zeta)\in\R$. Moreover, if $I(\zeta)=I(\zeta')$ there exists a rotation $\Phi^{\widehat{\partial}_\theta}_{\theta'}$ such that $\big(\Phi^{\widehat{\partial}_\theta}_{\theta'}\big)_*\zeta=\zeta'$. Since the action is $\widehat{\partial}_\theta$-invariant, we deduce that the action is a function of $I(\zeta)$ only and we can define $\mathcal A:[\min I_{m,\gamma},\max I_{m,\gamma}]\rightarrow\R$. We already have an expression for the action at the minimum and maximum of $I_{m,\gamma}$. We now give a formula for the action when $I\in(\min I_{m,\gamma},\max I_{m,\gamma})$.

Every integral line $z:\R\rightarrow SS^2$ of $X^m_\gamma$, such that $I_{m,\gamma}(z)=I$ oscillates between the latitudes at height $t^-(I)$ and $t^+(I)$. Their numerical values can be easily read off from the graphs of $\widehat{I}^\pm_{m,\gamma}$ drawn in Figure \ref{pic}. If we take $z$ with $t(z(0))=t^-(I)$,  there exists a smallest $s(I)>0$ such that $t\big(z(s(I))\big)=t^+(I)$. By Birkhoff's ergodic theorem
\begin{equation}
\mathcal A(I)=\frac{1}{s(I)}\int_0^{s(I)}\left(m^2-m\frac{\beta_\theta(t)\sin\varphi}{\gamma(t)}(z(s))+1\right)ds.
\end{equation}
Using this identity, we computed with \textit{Mathematica} the action for ellipsoids of revolution and we found that is positive on every energy level. This shows numerically that $\op{Con}(g,\sigma)=(0,+\infty)$ for these systems by making use of Proposition \ref{mcdcri}, hence corroborating Conjecture \ref{conj}. On the other hand, we know that, when the ellipsoid is very thin, its curvature is concentrated on its poles and hence, by Proposition \ref{bigm}, the set $\mathcal C(g,\sigma)$ is not the whole $(0,+\infty)$. Therefore, these data would also show numerically that, in general, the inclusion $\mathcal C(g,\sigma)\subset\op{Con}(g,\sigma)$ is strict.
\section{Periodic orbits and double covers}\label{sec_dyn1}
The aim of this section is to establish Proposition \ref{covthe}. Consider $\R\Pro^3$ as the quotient of $S^3$ by the antipodal map $A:S^3\rightarrow S^3$. The quotient map $p:S^3\rightarrow \R\Pro^3$ is a double cover with the map $A$ as the only non-trivial deck transformation. There is a bijection $Z\mapsto \widehat{Z}$ between $\Gamma(\R\Pro^3)$ and $\Gamma_A(S^3)\subset\Gamma(S^3)$ the subset of $A$-invariant vector fields. The antipodal map permutes the flow lines of $\widehat{Z}$. Moreover, a lift of a trajectory for $Z$ is a trajectory for $\widehat{Z}$ and the projection of a trajectory for $\widehat{Z}$ is a trajectory for $Z$. In the next lemma we restrict this correspondence to prime contractible periodic orbits of $Z$. 
\begin{lem}\label{lemlin}
There is a bijection between contractible prime orbits $z$ of $Z$ and pairs of antipodal prime orbits $\{\widehat{z},A(\widehat{z})\}$ of $\widehat{Z}$ such that $\widehat{z}$ and $A(\widehat{z})$ are disjoint. Furthermore, the linking number $lk(\widehat{z},A(\widehat{z}))$ between them is even.
\end{lem}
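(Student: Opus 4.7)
The plan is to treat the lemma in two steps: the bijection of orbits and the parity of the linking number.

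For the bijection I would invoke the covering space lifting criterion. Given a prime periodic orbit $z$ of $Z$ with period $T$, fix $\tilde q_0 \in p^{-1}(z(0))$ and let $\widehat z$ be the unique lift of $z$ starting at $\tilde q_0$ and flowing by $\widehat Z$. Since $p^{-1}(z(0)) = \{\tilde q_0, A(\tilde q_0)\}$, the endpoint $\widehat z(T)$ equals either $\tilde q_0$ (so $\widehat z$ is $T$-periodic and $z$ is contractible) or $A(\tilde q_0)$ (so $\widehat z$ has prime period $2T$ and $z$ is non-contractible). In the first case $\{\widehat z, A\widehat z\}$ is a pair of distinct prime orbits of $\widehat Z$, with primality inherited from $z$. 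The two orbits are disjoint: if they met at some $\tilde q$, then $A(\tilde q)$ would also lie on $\widehat z$, and by primality of $z$ the orbit $\widehat z$ would visit $\tilde q$ and $A(\tilde q)$ at times congruent modulo $T$, forcing $\tilde q = A(\tilde q)$ and contradicting the freeness of $A$. The inverse of the correspondence is immediate: a pair $\{\widehat z, A\widehat z\}$ of disjoint antipodal prime orbits projects to a prime contractible orbit of $Z$.

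For the parity, since $z$ is null-homotopic in $\R\Pro^3$ there is a generic immersion $D : D^2 \to \R\Pro^3$ with $D|_{\partial D^2}$ parametrising $z$. Lift it to $\tilde D : D^2 \to S^3$ with $\tilde D|_{\partial D^2}$ parametrising $\widehat z$, and examine the pullback intersection
\begin{equation*}
M := \{(x, y) \in D^2 \times D^2 \,:\, \tilde D(x) = A\tilde D(y)\},
\end{equation*}
which by genericity is a compact $1$-submanifold of $D^2 \times D^2$. The swap $\iota : (x,y) \mapsto (y,x)$ preserves $M$, because $\tilde D(x) = A\tilde D(y)$ implies $A\tilde D(x) = \tilde D(y)$, and is free, for a fixed point would give $\tilde D(x) = A\tilde D(x)$ and hence a fixed point of $A$ on $S^3$. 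Moreover $\iota$ reverses the intrinsic orientation of $M$, because the transverse intersection orientation of two codimension-$1$ submanifolds of an oriented $3$-manifold changes sign when the roles of the two submanifolds are swapped, and $\iota$ realises precisely this swap between $\tilde D$ and $A\tilde D$.

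A compact connected $1$-manifold admits no free orientation-reversing involution: on a circle this would be a reflection and on an arc it would fix the midpoint. Hence $\iota$ pairs up the connected components of $M$, so the number of arc components of $M$ is even, say $2m$, giving $4m$ boundary points in total. The boundary decomposes as
\begin{equation*}
\partial M = \bigl(M \cap (\partial D^2 \times D^2)\bigr) \sqcup \bigl(M \cap (D^2 \times \partial D^2)\bigr),
\end{equation*}
and $\iota$ bijects these two pieces, so each has cardinality $2m$. The second piece is, by genericity, in bijection with the transverse intersection points of $\tilde D(D^2)^\circ$ with $A(\widehat z)$ in $S^3$, whose signed count is $\op{lk}(\widehat z, A(\widehat z))$. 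Since a signed sum of $\pm 1$'s has the same parity as its unsigned count, we conclude $\op{lk}(\widehat z, A(\widehat z)) \equiv 2m \equiv 0 \pmod 2$. The most delicate step will be the orientation claim for $\iota$, which I expect to reduce to the standard sign identity for intersection orientations after a careful local frame computation.
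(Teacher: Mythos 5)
Your bijection argument is essentially the same as the paper's. Your parity argument, however, takes a genuinely different route. The paper works in $B^4$: it bounds $\widehat{z}$ by an embedded surface $S_1\subset B^4$ transverse to $\partial B^4$ and perturbed to avoid the origin, sets $S_2:=A(S_1)$, and computes $\op{lk}(\widehat{z},A(\widehat{z}))$ as the algebraic count of the finite transverse intersection $S_1\cap S_2$; the involution $w\mapsto A(w)$ acts freely on $S_1\cap S_2$ because $0\notin S_1$, pairing up the intersection points, and since signed and unsigned counts share the same parity the linking number is even. You instead stay inside $S^3$, bound $\widehat{z}$ by an immersed disk $\tilde{D}$, and read the linking number off the boundary of the one-dimensional fiber product $M$ of $\tilde{D}$ with $A\tilde{D}$. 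The $B^4$ approach is slightly lighter because the relevant intersection is already zero-dimensional; yours avoids passing to a bounding four-manifold and uses the automatic freeness of $A$ on $S^3$ in place of the perturbation away from the fixed point $0\in B^4$. One genuine simplification is available to you: the orientation-reversal of $\iota$, which you flag as the delicate step, is not needed at all. Circle components of $M$ contribute nothing to $\partial M$, so you only need that $\iota$ fixes no arc component, and freeness alone gives this, because every involution of a compact interval has a fixed point. Dropping the orientation analysis shortens your argument without affecting the parity count, and brings it even closer in spirit to the paper's pairing trick.
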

\begin{proof} 
Associate to a contractible periodic orbit $z$ its two lifts $\widehat{z}_1$ and $\widehat{z}_2=A(\widehat{z}_1)$. Since $z$ is contractible both lifts are closed. They are also prime since a lift of an embedded path is still embedded. Suppose that the two lifts intersect. This implies that there exist points $t_1$ and $t_2$ such that $\widehat{z}_1(t_1)=\widehat{z}_2(t_2)$. Applying $p$ to this equality, we find $z(t_1)=z(t_2)$ and so $t_1=t_2$ modulo the period of $z$. Hence, $\widehat{z}_1=\widehat{z}_2$ contradicting the fact that the two lifts are distinct.

For the inverse correspondence, associate to two antipodal disjoint prime periodic orbits $\{\widehat{z},A(\widehat{z})\}$ their common projection $p(\widehat{z})$. The projected curve is contractible since its lifts are closed. Moreover, it is prime since, if $p(\widehat{z})(t_1)=p(\widehat{z})(t_2)$, either $\widehat{z}(t_1)=A(\widehat{z})(t_2)$ and $\widehat{z}$ and $A(\widehat{z})$ are not disjoint or $\widehat{z}(t_1)=\widehat{z}(t_2)$ and $t_1=t_2$ modulo the period of $\widehat{z}$.

We now compute the linking number between the two knots. Consider $S^3$ as the boundary of $B^4$ the unit ball inside $\mathbb R^4$ and denote still by $A$ the antipodal map on $B^4$, which extends the antipodal map on $S^3$. Take an embedded surface $S_1\subset B^4$ such that $\partial S_1=\widehat{z}_1$ and transverse to the boundary of $B^4$. By a small perturbation we can also assume that $0\in B^4$ does not belong to the surface. The antipodal surface $S_2:=A(S_1)$ has the curve $\widehat{z}_2$ as boundary and $lk(\widehat{z}_1,\widehat{z}_2)$ is equal to the intersection number between $S_1$ and $S_2$. By perturbing again $S_1$ we can suppose that all the intersections are transverse. This follows from the fact that, if we change $S_1$ close to a point $z$ of intersection, this will affect $S_2=A(S_1)$ only near the antipodal point $A(z)=-z$, which is different from $z$ since the origin does not belong to $S_1$. Now that transversality is achieved, we claim that the number of intersections is even. This stems from the fact that, if $z\in S_1\cap S_2$, then $A(z)\in A(S_1)\cap A(S_2)=S_2\cap S_1$ and $z$ and $A(z)$ are different since $z\neq0$. This implies that the intersection number between the two surfaces is even as well and the lemma is proved. We also notice that the sign of the intersection at $z$ is the same as the sign at $A(z)$, since $A$ preserves the orientation. Thus, we cannot conclude that the total intersection number is zero and indeed for any $k\in\mathbb Z$ one can find a pair of antipodal knots, whose linking number is $2k$.
\end{proof}

\begin{proof}[Proof of Proposition \ref{covthe}]
We can suppose without loss of generality, that $N=\R\Pro^3$ and that $p$ is the quotient covering map. By Corollary \ref{hwzcor} there exist two prime closed orbits $\widehat{z}_1$ and $\widehat{z}_2$ of $\widehat{Z}$ forming a Hopf link and if there is any other periodic orbit geometrically distinct from these two, $\widehat{Z}$ has infinitely many periodic orbits.

We claim that $z_1:=p(\widehat{z}_1)$ and $z_2:=p(\widehat{z}_2)$ are geometrically distinct closed orbits for $Z$ on $\R\Pro^3$. If, by contradiction, $z_1$ coincides with $z_2$, by Lemma \ref{lemlin}, $\widehat{z}_1$ and $\widehat{z}_2$ are antipodal and their linking number is even. This is a contradiction since $|lk(\widehat{z}_1,\widehat{z}_2)|=1$. Therefore, we conclude that $z_1$ and $z_2$ are distinct. On the other hand, if $\widehat{Z}$ has infinitely many periodic orbits the same is true for $Z$. Hence, also $Z$ has either $2$ or infinitely many distinct periodic orbits.

If $Z$ has a prime contractible periodic orbit $w$, its lifts $\widehat{w}_1$ and $\widehat{w}_2$ are disjoint, antipodal and prime periodic orbits for $\widehat{Z}$ by Lemma \ref{lemlin}. Since $lk(\widehat{w}_1,\widehat{w}_2)$ is even, $\{\widehat{w}_1,\widehat{w}_2\}\neq \{\widehat{z}_1,\widehat{z}_2\}$
and, therefore, there are at least three distinct periodic orbits for $\widehat{Z}$. So there are infinitely many periodic orbits for $\widehat{Z}$ and, hence, also for $Z$.

The statement about contact forms is a consequence of Theorem \ref{hwzthm} and the relation $\widehat{R^\tau}=R^{p^*\tau}$.
\end{proof}

\section{Dynamical convexity and low energy values}\label{sec_dyn2}
In this last section we present two independent proofs of Proposition \ref{corfinalo}, which allows us to apply Proposition \ref{covthe} and, finally, get Theorem \ref{mainthm} about the existence of periodic orbits on low energy levels. As a common step we fix some $\beta\in\mathcal P^{\overline{\sigma}}$ and take an $m_\beta>0$ such that $(SS^2,\tau^{m,\beta})$ is a contact manifold for all $m\in[0,m_\beta)$. This is equivalent to asking $h_{m,\beta}>0$ for $m$ in $[0,m_\beta)$. We denote the Reeb vector field of $\tau^{m,\beta}$ by $R^{m,\beta}:=R^{\tau^{m,\beta}}=\frac{1}{h_{m,\beta}}X^m$. 

\subsection{Contactomorphism with a convex hypersurface}\label{conve}
The first strategy of proof relies on the following result, which immediately implies Proposition \ref{corfinalo}.
\begin{prp}\label{convi}
If $m\in[0,m_\beta)$, there exists a double cover $p_{m,\beta}:S^3\rightarrow SS^2$ and an embedding $\upsilon_{m,\beta}:S^3\rightarrow \mathbb C^2$ bounding a region starshaped around the origin and such that $p_{m,\beta}^*\tau^{m,\beta}=-\upsilon_{m,\beta}^*\lambda_{\op{st}}$. Furthermore, as $m$ goes to zero, $\upsilon_{m,\beta}$ tends in the $C^2$-topology to the embedding of $S^3$ as the Euclidean sphere of radius $\sqrt{2}$. In particular, $\upsilon_{m,\beta}$ is a convex embedding for $m$ sufficiently small.
\end{prp}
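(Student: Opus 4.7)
The plan is to reduce everything to the case $m=0$, where Remark \ref{ext} identifies $\tau^{0,\beta}$ as a Boothby--Wang prequantization contact form, and then propagate the identification to $m\in(0,m_\beta)$ using Gray stability together with a radial rescaling in $\mathbb C^2$.

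\emph{Base case $m=0$.} By Remark \ref{ext} we have $d\tau^{0,\beta}=-\pi^*\sigma$ and Reeb vector field $V$ of period $2\pi$, so $(SS^2,\tau^{0,\beta})$ is a Boothby--Wang bundle over $(S^2,-\sigma)$. On the other side, the Euclidean sphere $S^3_{\sqrt 2}\subset\mathbb C^2$ carries $-\lambda_{\op{st}}|_{S^3_{\sqrt 2}}$, whose Reeb vector field is $-J_{\op{st}}z/2$ and generates the Hopf action with period $4\pi$. Since the antipodal map is an isometry and preserves $\lambda_{\op{st}}$, this form descends to $\R\Pro^3=S^3_{\sqrt 2}/\{\pm 1\}$ as a Boothby--Wang contact form with Reeb period $2\pi$. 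The two circle bundles $SS^2\to S^2$ and $\R\Pro^3\to S^2$ both have Euler number $\pm 2$, and the symplectic forms induced on the base represent the same integral class as $[\sigma]$ up to an orientation choice on $S^2$. Moser's theorem produces an area-preserving diffeomorphism of $S^2$ intertwining them, which lifts to a bundle isomorphism of the two principal $S^1$-bundles carrying one Boothby--Wang form to the other. Composing with the double cover $S^3\to\R\Pro^3$ and the inclusion $S^3_{\sqrt 2}\hookrightarrow\mathbb C^2$ produces a double cover $p_{0,\beta}:S^3\to SS^2$ and an embedding $\upsilon_{0,\beta}:S^3\to\mathbb C^2$ onto $S^3_{\sqrt 2}$ satisfying
\begin{equation*}
p_{0,\beta}^*\tau^{0,\beta}=-\upsilon_{0,\beta}^*\lambda_{\op{st}}.
\end{equation*}
This Boothby--Wang identification is the main obstacle.

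\emph{Propagation to $m>0$.} For $m\in[0,m_\beta)$, $\{\tau^{m,\beta}\}$ is a smooth family of contact forms on $SS^2$. Gray's stability theorem yields a smooth isotopy $\phi_m:SS^2\to SS^2$ with $\phi_0=\op{Id}$ and $\phi_m^*\tau^{m,\beta}=h_m\,\tau^{0,\beta}$ for a smooth positive function $h_m$ with $h_0\equiv 1$, both depending smoothly on $m$. Define the double cover $p_{m,\beta}:=\phi_m\circ p_{0,\beta}$ and the positive function $\widehat{h}_m:=h_m\circ p_{0,\beta}$ on $S^3$. Then
\begin{equation*}
p_{m,\beta}^*\tau^{m,\beta}=p_{0,\beta}^*\phi_m^*\tau^{m,\beta}=\widehat{h}_m\cdot p_{0,\beta}^*\tau^{0,\beta}=-\widehat{h}_m\,\upsilon_{0,\beta}^*\lambda_{\op{st}}.
\end{equation*}
Set $\upsilon_{m,\beta}(z):=\sqrt{\widehat{h}_m(z)}\,\upsilon_{0,\beta}(z)$; the image is starshaped about the origin by positivity of $\widehat{h}_m$. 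Because the radial direction $z$ lies in the kernel of $\lambda_{\op{st}}$ at $z$ (i.e.\ $g_{\op{st}}(J_{\op{st}}z,z)=0$), a direct one-line computation gives $\upsilon_{m,\beta}^*\lambda_{\op{st}}=\widehat{h}_m\,\upsilon_{0,\beta}^*\lambda_{\op{st}}$, hence $-\upsilon_{m,\beta}^*\lambda_{\op{st}}=p_{m,\beta}^*\tau^{m,\beta}$.

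\emph{The $m\to 0$ limit and convexity.} Since Gray's isotopy depends smoothly on $m$ and $h_0\equiv 1$, the functions $h_m$ and hence $\widehat{h}_m$ converge to $1$ in the $C^\infty$-topology as $m\to 0$. Therefore $\upsilon_{m,\beta}\to\upsilon_{0,\beta}$ in $C^\infty$, and in particular in $C^2$. The sphere $\upsilon_{0,\beta}(S^3)=S^3_{\sqrt 2}$ has strictly positive definite second fundamental form, and strict convexity of an embedded hypersurface is a $C^2$-open condition, so $\upsilon_{m,\beta}$ is a convex embedding for all sufficiently small $m$, completing the proof.
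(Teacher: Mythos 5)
Your proposal is correct and follows essentially the same route as the paper: Gray stability to reduce to $m=0$, a Boothby--Wang identification of $\tau^{0,\beta}$ with (a multiple of) $\upsilon^*\lambda_{\op{st}}$, a radial rescaling $\upsilon_{m,\beta}=\sqrt{\widehat h_m}\,\upsilon_{0,\beta}$, and $C^2$-openness of strict convexity. The one organizational difference is the $m=0$ step: the paper first cites Weinstein's theorem to produce an $S^1$-equivariant map $B_\beta:SS^2_0\to SS^2$ sending the round Levi--Civita form $\psi_0$ to $\tau^{0,\beta}$ (Corollary \ref{corconb}), and then computes an explicit quaternionic double cover $p_0:S^3\to SS^2_0$ with $p_0^*\psi_0=-2\upsilon^*\lambda_{\op{st}}$ (Proposition \ref{stalif}); you instead compare $\tau^{0,\beta}$ directly with the descent of $-\lambda_{\op{st}}|_{S^3_{\sqrt2}}$ to $\R\Pro^3$ and invoke Moser plus the bundle-lifting step to match them, absorbing the paper's factor of $2$ by choosing the sphere of radius $\sqrt2$. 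That Moser-plus-lifting step is exactly the content of the Weinstein result the paper cites; it is correct but, as stated, slightly glosses over the gauge-theoretic part of the lift (matching the Euler classes and then killing the $\pi^*(\text{closed }1\text{-form})$ discrepancy using $H^1(S^2)=0$). The paper's quaternionic formula buys an explicit $p_0$, while your version is a bit more conceptual; either works.
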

\begin{proof}[First proof of Proposition \ref{corfinalo}]
As proven in \cite[Theorem 3.7]{hwz1}, the contact forms of the type $\widetilde{\upsilon}^*\lambda_{\op{st}}$, with $\widetilde{\upsilon}:S^3\rightarrow\mathbb C^2$ a convex embedding, are tight and dynamically convex.
\end{proof}

We construct the double cover $p_{m,\beta}$ in three steps. 
\begin{lem}\label{lemconb}
There exists a diffeomorphism $F_{m,\beta}:SS^2\rightarrow SS^2$ and a function $q_{m,\beta}:SS^2\rightarrow\mathbb R$ such that 
\begin{equation}\label{gra}
F_{m,\beta}^*\tau^{m,\beta}=e^{q_{m,\beta}}\tau^{0,\beta}.
\end{equation}
The function $q_{m,\beta}$ tends to $q_{0,\beta}=0$ in the $C^2$-topology.
\end{lem}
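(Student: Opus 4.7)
The plan is to invoke Gray's stability theorem applied to the smooth one-parameter family of contact forms $\{\tau^{m,\beta}\}_{m \in [0, m_\beta)}$ on the compact manifold $SS^2$; by the very definition of $m_\beta$, each $\tau^{m,\beta}$ is a contact form, and the family varies smoothly in $m$ by construction.

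Following the standard Moser trick, I would look for $F_{m,\beta}$ as the time-$m$ flow of a time-dependent vector field $Y_m$ that is required to lie in $\xi_m := \ker\tau^{m,\beta}$. Differentiating the desired identity $F_{m,\beta}^*\tau^{m,\beta} = e^{q_{m,\beta}}\tau^{0,\beta}$ in $m$ and then pushing forward by $F_{m,\beta}$, Cartan's formula together with the normalization $\imath_{Y_m}\tau^{m,\beta} = 0$ yields
\begin{equation}
\partial_m \tau^{m,\beta} + \imath_{Y_m}\,d\tau^{m,\beta} = \mu_m\,\tau^{m,\beta},
\end{equation}
where $\mu_m := (\partial_m q_{m,\beta})\circ F_{m,\beta}^{-1}$. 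Contracting with the Reeb vector field $R^{m,\beta}$ fixes $\mu_m = \partial_m \tau^{m,\beta}(R^{m,\beta})$, and restricting the identity to $\xi_m$, where $d\tau^{m,\beta}$ is symplectic, uniquely determines $Y_m$ there. In our setting $\partial_m\tau^{m,\beta} = \alpha$ and $\alpha(X^m) = m$, so this gives the explicit expression $\mu_m = m/h_{m,\beta}$, which is smooth on $[0,m_\beta)\times SS^2$.

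Because $SS^2$ is compact, $Y_m$ integrates to a smooth family of diffeomorphisms $F_{m,\beta}$ with $F_{0,\beta} = \operatorname{Id}$, and $q_{m,\beta}$ is then recovered by integrating $\partial_m q_{m,\beta} = \mu_m \circ F_{m,\beta}$ starting from the initial condition $q_{0,\beta} = 0$. Joint smoothness of $Y_m$ in $(m,z)$ then implies that $q_{m,\beta}\to 0$ in the $C^k$-topology for every $k$, and in particular in $C^2$, as $m\to 0$. There is no genuine obstacle; the only step deserving attention is checking the joint smoothness of $Y_m$ in $(m,z)$, which follows from its algebraic definition in terms of the smoothly varying bilinear form $d\tau^{m,\beta}|_{\xi_m}$ and the $1$-form $\alpha|_{\xi_m}$.
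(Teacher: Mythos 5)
Your proof is correct and follows essentially the same route as the paper's: apply Gray's stability theorem to the family $m\mapsto\tau^{m,\beta}$, recover $q_{m,\beta}$ by integrating $\partial_m q_{m,\beta}(z)=\big(\partial_m\tau^{m,\beta}\big)(R^{m,\beta})_{F_{m,\beta}(z)}$ from $q_{0,\beta}=0$, and conclude $C^2$-convergence from joint smoothness in $(m,z)$. You additionally unwind the Moser trick and observe the explicit formula $\partial_m\tau^{m,\beta}(R^{m,\beta})=m/h_{m,\beta}$, which the paper leaves implicit, but the argument is the same.
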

\begin{proof}
We apply Gray's Stability Theorem to the family $m\mapsto\tau^{m,\beta}$ and get $F_{m,\beta}$ and $q_{m,\beta}$ satisfying \eqref{gra}. In particular, $q_{m,\beta}$ is obtained integrating in the variable $m$ the equation
\begin{equation}
\frac{d}{dm}q_{m,\beta}(z)=\left(\frac{d}{dm}\tau^{m,\beta}\right)(R^{m,\beta})_{F_{m,\beta}(z)}
\end{equation}
with the boundary condition $q_{0,\beta}(z)=0$. Since the function $(m,z)\mapsto\frac{d}{dm}q_{m,\beta}(z)$ is smooth (hence $C^2$) on $[0,m_\beta)\times SS^2$, the same is true for $(m,z)\mapsto q_{m,\beta}(z)$. Therefore, the map $m\mapsto q_{m,\beta}$ is continuous in the $C^2$-topology.
\end{proof}
Let $(S^2,g_0,\mu_0)$ be the magnetic system on the round sphere of radius $1$ given by the area form. We denote by $SS^2_0$ the unit sphere bundle, by $\jmath_0$ the rotation by $\pi/2$ and by $\psi_0$ the vertical form associated with the metric $g_0$. Our next task is to relate $\tau^{0,\beta}$ with $\psi_0$. For this purpose, we need the following proposition due to Weinstein \cite{wei3}. For a proof we refer to \cite[Appendix B]{gui}.
\begin{prp}
Suppose $E_i\rightarrow S^2$, with $i=0,1$, are two $S^1$-bundles endowed with $S^1$-connection forms $\tau_i\in\Omega^1(E_i)$. Call $\sigma_i\in \Omega^2(S^2)$ their curvature forms and suppose they are both symplectic and such that 
\begin{equation}\label{intsi}
\left|\int_{S^2}\sigma_0\right|=\left|\int_{S^2}\sigma_1\right|,
\end{equation}
Then, there is an $S^1$-equivariant diffeomorphism $B:E_0\rightarrow E_1$ such that $B^*\tau_1=\tau_0$.
\end{prp}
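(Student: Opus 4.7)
The plan is to reduce the problem to a single $S^1$-bundle carrying two competing connections and then apply a Moser deformation argument on the total space. The first step exploits the classification of principal $S^1$-bundles over $S^2$ by their Euler class, represented geometrically by $\sigma/2\pi$ on the base. The hypothesis $|\int_{S^2}\sigma_0|=|\int_{S^2}\sigma_1|$ forces the two Euler classes to agree up to sign, so after reversing the $S^1$-action on $E_1$ if necessary (which replaces $\tau_1$ by $-\tau_1$ and $\sigma_1$ by $-\sigma_1$), I may assume the two integrals are equal and choose an $S^1$-equivariant diffeomorphism $\phi\colon E_0\to E_1$. Pulling $\tau_1$ back along $\phi$ reduces everything to the construction of an $S^1$-equivariant self-diffeomorphism $B$ of a single bundle $E$ carrying $\tau_1$ to $\tau_0$.

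Having made this reduction, I would set $\tau_t:=(1-t)\tau_0+t\tau_1$ and observe that each $\tau_t$ is again an $S^1$-connection: the invariance condition and the normalization $\imath_V\tau_t=1$ (where $V$ generates the $S^1$-action) pass to convex combinations. Its curvature $\sigma_t:=(1-t)\sigma_0+t\sigma_1$ remains a symplectic (hence nowhere vanishing) $2$-form on $S^2$, because $\sigma_0$ and $\sigma_1$ share the same orientation---having equal nonzero integrals---and each is individually nowhere zero.

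Next I apply the Moser trick. Seeking a time-dependent $S^1$-invariant vector field $X_t$ on $E$ whose flow $\Phi_t$ satisfies $\Phi_t^*\tau_t=\tau_0$, Cartan's formula reduces the problem to solving $d(\imath_{X_t}\tau_t)+\imath_{X_t}d\tau_t=-(\tau_1-\tau_0)$. The right-hand side is horizontal and $S^1$-invariant, hence basic: $\tau_1-\tau_0=\pi^*\eta$ for some $\eta\in\Omega^1(S^2)$ satisfying $d\eta=\sigma_1-\sigma_0$, which is exact precisely because the two integrals coincide. Imposing $\imath_{X_t}\tau_t=0$ collapses the equation to $\imath_{X_t}\pi^*\sigma_t=-\pi^*\eta$, which I resolve on the base by defining $Y_t\in\Gamma(S^2)$ through $\imath_{Y_t}\sigma_t=-\eta$ (possible by non-degeneracy of $\sigma_t$) and taking $X_t$ to be the $\tau_t$-horizontal lift of $Y_t$. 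Compactness of $E$ gives a flow defined on all of $[0,1]$, and the $S^1$-invariance of $\tau_t$ makes each horizontal lift $S^1$-invariant, so $B:=\Phi_1$ is equivariant and satisfies $B^*\tau_1=\tau_0$.

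I expect the first step to be the subtlest point: one has to keep careful track of signs when the two Euler classes differ only up to sign, since reversing the $S^1$-action flips both the connection form and the curvature, and there is a potential orientation clash if $\sigma_0$ and $\sigma_1$ initially induce opposite orientations on $S^2$. The remainder is a standard Moser argument on a principal circle bundle; the only nontrivial verification is that the horizontal lift of a vector field on $S^2$ through an $S^1$-connection is automatically $S^1$-invariant, which is immediate from the invariance of the connection itself.
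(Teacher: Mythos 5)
The paper itself gives no proof of this proposition; it is attributed to Weinstein and the reader is referred to Appendix~B of Guillemin's book. Your Moser argument on the total space is exactly the standard proof and it is correct in all its essential steps: the convex combination $\tau_t$ is again a connection form, its curvature $\sigma_t$ stays nondegenerate because $\sigma_0,\sigma_1$ have the same sign, the difference $\tau_1-\tau_0$ is basic so equals $\pi^*\eta$, and solving $\imath_{Y_t}\sigma_t=-\eta$ on the base and lifting horizontally produces an $S^1$-invariant Moser vector field whose time-one flow does the job.

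The part you yourself flagged as the subtlest point is, however, genuinely wrong as written. Reversing the $S^1$-action on $E_1$ replaces $\tau_1$ by $-\tau_1$, so even if the subsequent Moser argument produced an equivariant $B$ with $B^*(-\tau_1)=\tau_0$, you would end up with $B^*\tau_1=-\tau_0$, not $\tau_0$; and ``equivariance'' with respect to the reversed action on $E_1$ is anti-equivariance for the original one, under which $(B^*\tau_1)(V_0)=\tau_1(-V_1)=-1\neq 1$. The correct way to absorb the absolute value is to let the sign change happen on the base rather than on the fibre: when $\int\sigma_0=-\int\sigma_1$ one chooses the initial bundle isomorphism $\phi\colon E_0\to E_1$ to cover an \emph{orientation-reversing} diffeomorphism of $S^2$. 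Such a $\phi$ exists because $e(\phi^*E_1)=\bar\phi^*e(E_1)=-e(E_1)=e(E_0)$, and $\phi^*\tau_1$ is then an honest connection form (normalization $\imath_V=1$ untouched) whose curvature $\bar\phi^*\sigma_1$ now has integral $\int\sigma_0$, after which your Moser argument applies verbatim. In the paper's actual application (Corollary~\ref{corconb}) both integrals are equal from the outset, so none of this bookkeeping is visible there.
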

\begin{cor}\label{corconb}
There exists an $S^1$-equivariant diffeomorphism $B_\beta:SS^2_0\rightarrow SS^2$ such that $B_\beta^*\tau^{0,\beta}=\psi_0$.
\end{cor}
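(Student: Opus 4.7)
The strategy is essentially a direct application of the preceding Weinstein-type proposition: I simply need to verify that $(SS^2_0,\psi_0)$ and $(SS^2,\tau^{0,\beta})$ are two $S^1$-bundles with connection forms whose curvatures are symplectic and have equal area. So the plan is in three small steps.

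First, I would check that $\tau^{0,\beta}=\psi-\pi^*\beta$ is an $S^1$-connection form on $SS^2$ with respect to the $V$-action. Since $V$ is vertical and $\psi$ is dual to $V$ in the coframe $(\alpha,\psi,\eta)$, we have $\pi^*\beta(V)=\beta(d\pi\,V)=0$ and $\psi(V)=1$, so $\tau^{0,\beta}(V)=1$. The $V$-flow preserves the metric vertically and acts trivially on the base, whence $\psi$ and $\pi^*\beta$ are $V$-invariant and so is $\tau^{0,\beta}$. The same kind of check for $\psi_0$ is standard: it is the usual connection form on the round unit tangent bundle.

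Second, I would compute the curvatures. Using the structure equations \eqref{bra}, $d\psi=-K\pi^*\mu$, and using $d\beta=\overline{\sigma}=\sigma-K\mu$, I get
\begin{equation*}
d\tau^{0,\beta}=d\psi-\pi^*(d\beta)=-K\pi^*\mu-\pi^*\sigma+K\pi^*\mu=-\pi^*\sigma,
\end{equation*}
so the curvature of $\tau^{0,\beta}$ is $\sigma$. The identical computation for the round sphere, where $K_0\equiv 1$, gives $d\psi_0=-\pi^*\mu_0$, so the curvature of $\psi_0$ is $\mu_0$. Both $\sigma$ and $\mu_0$ are symplectic (area) forms on $S^2$, and by the normalization \eqref{normi} together with $\int_{S^2}\mu_0=4\pi$ we have $\bigl|\int_{S^2}\sigma\bigr|=4\pi=\bigl|\int_{S^2}\mu_0\bigr|$.

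Third, I would invoke the preceding proposition to obtain an $S^1$-equivariant diffeomorphism $B_\beta:SS^2_0\to SS^2$ with $B_\beta^*\tau^{0,\beta}=\psi_0$, as required. There is no real obstacle here: the only thing to be careful about is the sign convention relating $d\psi$ to the curvature (the bracket relations \eqref{bra} determine this unambiguously), and the matching of orientations on the fibers, which is ensured by the fact that both bundles are the oriented unit tangent bundle of $S^2$ and both connection forms take the value $1$ on their respective fiber generators.
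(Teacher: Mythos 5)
Your proposal is correct and follows essentially the same route as the paper: verify that $\tau^{0,\beta}$ is an $S^1$-connection form, compute $d\tau^{0,\beta}=-\pi^*\sigma$ to read off the curvature, match the integral $4\pi$ with that of the round sphere, and invoke the Weinstein proposition. The only cosmetic difference is that the paper checks $\tau^{0,\beta}(V)=1$ and $\mathcal L_V\tau^{0,\beta}=0$ by observing these are equivalent to $V$ being the Reeb vector field of $\tau^{0,\beta}$, whereas you verify them by direct computation; either works, and your sign convention for the curvature form is immaterial since the proposition only compares absolute values of the integrals.
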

\begin{proof}
We show first that $\tau^{0,\beta}$ is an $S^1$-connection form. If $V$ is the vertical vector field, we have to check that
\begin{equation}
\bullet\ \tau^{0,\beta}(V)=1,\quad\quad\bullet\ \mathcal L_V\tau^{0,\beta}=0.
\end{equation}
Using Cartan's identity for the second equation we see that these requirements are equivalent to saying that $V$ is the Reeb vector field of the contact form $\tau^{0,\beta}$ and, hence, they are satisfied. Since $d\tau^{0,\beta}=-\pi^*\sigma$, we also know that the curvature form of the connection associated to $\tau^{0,\beta}$ is exactly $-\sigma$. By normalization \eqref{normi}, condition \eqref{intsi} is met and the previous proposition can be applied to get $B_\beta$.
\end{proof}
What we have found so far tells us that we only need to study the pullback of $\psi_0$ to $S^3$. This will be our next task. The ideas that we use come essentially from \cite{conoli} and \cite{pathar}.

Identify $\mathbb C^2$ with the space of quaternions by setting $\mathbf1:=(1,0)$, $\mathbf i:=(i,0)$, $\mathbf j:=(0,1)$ and $\mathbf k:=(0,i)$. With this choice left multiplication by $\mathbf i$ corresponds to the action of $J_{\op{st}}$. Let $\upsilon:S^3\rightarrow \mathbb C^2$ be the inclusion of the unit Euclidean sphere. Identify the Euclidean space $\mathbb R^3$ with the vector space spanned by $\mathbf i,\mathbf j,\mathbf k$ endowed with the restricted inner product. We think the round sphere $(S^2,g_0)$ as embedded in this version of the Euclidean space. Thus, the unit sphere bundle $SS^2_0$ is embedded in $\mathbb R^3\times\mathbb R^3$ as the pair of vectors $(u_1,u_2)$ such that $u_1,u_2\in S^2$ and $g_{\op{st}}(u_1,u_2)=0$.

Under this identification of the sphere bundle, if $z=(u_1,u_2)\in SS^2_0\subset\mathbb R^3\times\mathbb R^3$ and $Z=(v_1,v_2)\in T_zSS^2_0\subset\mathbb R^3\times\mathbb R^3$, we have that
\begin{equation}\label{formucon}
(\psi_0)_{z}(Z)=g_0\Big(v_2-g_{\op{st}}\big(v_2,u_1\big)u_1,{\jmath_0}_{u_1}(u_2)\Big)=g_{\op{st}}\big(v_2,{\jmath_0}_{u_1}(u_2)\big)
\end{equation}
as a consequence of the relation between the Levi Civita connections on $S^2$ and $\mathbb R^3$.

For any $U\in S^3$, we define a map $C_U:\mathbb R^3\hookrightarrow\mathbb R^3$ using quaternionic multiplication and inverse by $C_U(U')=U^{-1}U'U$. The quaternionic commutation relations and the compatibility between the metric and the multiplication tell us that $C_U$ restricts to an isometry of $S^2$. Hence, its differential $dC_U$ yields a diffeomorphism of the unit sphere bundle onto itself given by $(u_1,u_2)\mapsto d_{u_1}C_U(u_2)=(C_U(u_1),C_U(u_2))$. Moreover, since $C_U$ is an isometry, $(dC_U)^*\psi_0=\psi_0$. 

We are now ready to define the covering map $p_0:S^3\rightarrow SS^2_0$. It is given by $p_0(U):=d_{\mathbf i}C_U(\mathbf j)$. Let us compute the pull-back of $\psi_0$ by $p_0$.
\begin{prp}\label{stalif}
The covering map $p_0$ relates $\psi_0$ and $\lambda_{\op{st}}$ in the following way:
\begin{equation}\label{equicon}
p_0^*\psi_0=-2\upsilon^*\lambda_{\op{st}}. 
\end{equation}
\end{prp}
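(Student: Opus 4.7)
The plan is to reduce the identity to a pointwise check at the identity quaternion by exploiting equivariance. Both sides of \eqref{equicon} are $1$-forms on $S^3$, so once we show that they are both invariant under a transitive group action, it suffices to verify them at a single point on a basis of the tangent space.

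First, I would show that $\upsilon^*\lambda_{\op{st}}$ is invariant under right multiplication $R_V:U\mapsto UV$ by unit quaternions $V\in S^3$. Since quaternionic multiplication preserves $g_{\op{st}}$ on $\mathbb{C}^2\cong\mathbb{H}$ and left multiplication by $\mathbf{i}$ (which represents $J_{\op{st}}$) commutes with right multiplication, the $1$-form $\lambda_{\op{st}}$ itself is $R_V$-invariant on $\mathbb{C}^2$, hence its restriction to $S^3$ is as well. On the other hand, the equivariance $p_0\circ R_V=dC_V\circ p_0$ follows directly from $C_{UV}=C_V\circ C_U$, and Corollary \ref{corconb} (or rather its proof) shows that $\psi_0$ is invariant under $dC_V$ because $C_V$ is an isometry of $(S^2,g_0)$. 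Therefore $p_0^*\psi_0$ is $R_V$-invariant as well.

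Next, I would compute $dp_0$ at $U=1$, where $p_0(1)=(\mathbf{i},\mathbf{j})$. Writing $(1+tW)^{-1}=1-tW+O(t^2)$ for a pure imaginary $W\in T_1 S^3$, one obtains
\begin{equation*}
 dp_0|_1(W)=\bigl([\mathbf{i},W],[\mathbf{j},W]\bigr).
\end{equation*}
On the basis $\{\mathbf{i},\mathbf{j},\mathbf{k}\}$ of $T_1 S^3$, the quaternionic commutators give $dp_0|_1(\mathbf{i})=(0,-2\mathbf{k})$, $dp_0|_1(\mathbf{j})=(2\mathbf{k},0)$, and $dp_0|_1(\mathbf{k})=(-2\mathbf{j},2\mathbf{i})$. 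Plugging these into the simplified formula \eqref{formucon}, and using that for pure imaginary $a,b$ with $g_{\op{st}}(a,b)=0$ the quaternionic product $ab$ coincides with the cross product and hence with $\jmath_{0,a}(b)$, I would verify directly that $(p_0^*\psi_0)_1$ takes the values $-2,0,0$ on $\mathbf{i},\mathbf{j},\mathbf{k}$ respectively. A parallel computation using $(\upsilon^*\lambda_{\op{st}})_U(W)=g_{\op{st}}(\mathbf{i}U,W)$ gives the same values for $-2\upsilon^*\lambda_{\op{st}}$ at $U=1$.

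Combining the two steps, the identity holds at $1\in S^3$ on a full basis, and by $R_V$-invariance it extends to all of $S^3$. The only delicate point is keeping the quaternionic conventions consistent — in particular, matching the sign of $\jmath_0$ on the round sphere (which is rotation by $\pi/2$ with respect to the outward normal, i.e.\ $\jmath_{0,u_1}(u_2)=u_1\times u_2=u_1u_2$ in quaternionic notation) with the orientation convention implicit in formula \eqref{formucon}. Once this sign is pinned down, the factor $-2$ in \eqref{equicon} appears naturally from the factor of $2$ in the commutators $[\mathbf{i},\mathbf{i}]=0$, $[\mathbf{j},\mathbf{i}]=-2\mathbf{k}$, etc., reflecting the fact that $p_0$ is the composition of the double cover $S^3\to SO(3)$ with the frame evaluation.
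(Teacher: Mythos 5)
Your proof is correct and takes essentially the same route as the paper: reduce to the point $\mathbf 1$ by right-invariance of both sides (using $p_0\circ R_V=dC_V\circ p_0$ and that $\lambda_{\op{st}}$ is $R_V$-invariant), then compute $d_{\mathbf 1}p_0(W)=([\mathbf i,W],[\mathbf j,W])$ and evaluate formula \eqref{formucon}. The only cosmetic difference is that you evaluate on the basis $\{\mathbf i,\mathbf j,\mathbf k\}$ of $T_{\mathbf 1}S^3$ whereas the paper parametrizes a generic $W=s\mathbf i+w\mathbf j$; the computations agree, including the source of the factor $-2$ from the commutators.
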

\begin{proof}
First of all we prove that both sides of \eqref{equicon} are invariant under right multiplication. For every $U\in S^3$, we define $R_U:S^3\rightarrow S^3$ as $R_U(U'):=U'U$. Thus, the identity $p_0\circ R_U=dC_U\circ p_0$ holds. Let us show that $p_0^*\psi_0$ is right invariant:
\begin{equation}
R_U^*\big(p_0^*\psi_0\big)=(p_0\circ R_U)^*\psi_0=(dC_U\circ p_0)^*\psi_0=p_0^*\big((dC_U)^*(\psi_0)\big)=p_0^*\psi_0.
\end{equation}
On the other hand, $\upsilon^*\lambda_{\op{st}}$ is also right invariant:
\begin{eqnarray*}
\big(R_U^*(\upsilon^*\lambda_{\op{st}})\big)_{U'}(W)=\big(\upsilon^*\lambda_{\op{st}}\big)_{R_U(U')}(dR_UW)&=&g_{\op{st}}\big((\mathbf iU')U,WU\big)\\
&=&g_{\op{st}}\big(\mathbf iU',W\big)\\
&=&\big(\upsilon^*\lambda_{\op{st}}\big)_{U'}(W),
\end{eqnarray*}
where we used that $R_U:\mathbb C^2\rightarrow\mathbb C^2$ is an isometry.

Therefore, it is enough to check equality \eqref{equicon} only at the point $\mathbf 1$. A generic element $W$ of $T_{\mathbf 1}S^3$ can be written as $s\mathbf i+w\mathbf j=s\mathbf i+\mathbf j\overline{w}$, where $w:=w_1\mathbf 1+w_2\mathbf i$ and $\overline{w}:=w_1\mathbf 1-w_2\mathbf i$ with $s,w_1 $ and $w_2$ real numbers. On the one hand,
\begin{equation}\label{ess1}
(\upsilon^*\lambda_{\op{st}})_{\mathbf 1}(W)=g_{\op{st}}\big(\mathbf i\mathbf 1,W\big)=g_{\op{st}}\big(\mathbf i,s\mathbf i+w\mathbf j\big)=s.
\end{equation}
On the other hand, we have that $d_{\mathbf 1}p_0(W)=(\mathbf iW-W\mathbf i,\mathbf jW-W\mathbf j)$. From the definition of $\psi_0$ we see that we are only interested in the second component:
\begin{equation}
\mathbf jW-W\mathbf j=\mathbf j(s\mathbf i+\mathbf j\overline{w})-(s\mathbf i+w\mathbf j)\mathbf j=-2s\mathbf k+(w-\overline{w})=-2s\mathbf k+2w_2\mathbf i.
\end{equation}
Now we apply formula \eqref{formucon} with $(u_1,u_2)=(\mathbf i,\mathbf j)$ and $v_2=-2s\mathbf k+2w_2\mathbf i$. In this case ${\jmath_0}_{u_1}$ is left multiplication by $\mathbf i$, so that ${\jmath_0}_{u_1}(u_2)=\mathbf i\mathbf j=\mathbf k$ and we find that
\begin{equation}\label{ess2}
g_{\op{st}}\big(-2s\mathbf k+2w_2\mathbf i,\mathbf k\big)=-2s.
\end{equation}
Comparing \eqref{ess1} with \eqref{ess2} we finally get $(p_0^*\psi_0)_{\mathbf 1}=-2(\upsilon^*\lambda_{\op{st}})_{\mathbf 1}$.
\end{proof}
Putting things together, we arrive at the following intermediate step. 
\begin{prp}\label{intermi}
There exists a covering map $p_{m,\beta}:S^3\rightarrow SS^2$ and a real function $\widehat{q}_{m,\beta}:S^3\rightarrow \mathbb R$ such that
\begin{equation}
p_{m,\beta}^*\tau^{m,\beta}=-2e^{\widehat{q}_{m,\beta}}\upsilon^*\lambda_{\op{st}}.
\end{equation}
Moreover the function $\widehat{q}_{m,\beta}$ tends to $0$ in the $C^2$-topology as $m$ goes to zero.
\end{prp}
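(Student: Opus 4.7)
The statement is essentially a bookkeeping consequence of the three results immediately preceding it, so my plan is simply to compose the maps produced by Lemma \ref{lemconb}, Corollary \ref{corconb} and Proposition \ref{stalif} and chain the pullback identities. Concretely, I would set
\begin{equation}
p_{m,\beta} := F_{m,\beta}\circ B_\beta\circ p_0\colon S^3\longrightarrow SS^2.
\end{equation}
Since $F_{m,\beta}$ and $B_\beta$ are diffeomorphisms and $p_0$ is a double cover, $p_{m,\beta}$ is a double cover as well, which takes care of the topological part of the conclusion.

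For the contact form identity I would just carry out the pullback one factor at a time. Using $(f\circ g)^*=g^*\circ f^*$, Lemma \ref{lemconb}, Corollary \ref{corconb} and Proposition \ref{stalif} in that order, one computes
\begin{equation}
p_{m,\beta}^*\tau^{m,\beta}
=p_0^*B_\beta^*\bigl(e^{q_{m,\beta}}\tau^{0,\beta}\bigr)
=p_0^*\bigl((e^{q_{m,\beta}}\circ B_\beta)\,\psi_0\bigr)
=-2\,e^{\widehat{q}_{m,\beta}}\,\upsilon^*\lambda_{\op{st}},
\end{equation}
where the function $\widehat{q}_{m,\beta}\colon S^3\to\mathbb R$ is forced to be
\begin{equation}
\widehat{q}_{m,\beta}:=q_{m,\beta}\circ B_\beta\circ p_0.
\end{equation}
Note that this uses only that $B_\beta$ is $S^1$-equivariant (hence smooth) and that pulling back a conformal multiple of a form by a diffeomorphism produces the corresponding conformal multiple of the pullback; there is no need to invoke any further property of $B_\beta$ here.

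Finally, for the $C^2$-convergence of $\widehat{q}_{m,\beta}$ to zero as $m\to 0$, I would observe that $B_\beta\circ p_0\colon S^3\to SS^2$ is a fixed smooth map independent of $m$, so precomposition with it is a continuous operator on $C^2$. Since Lemma \ref{lemconb} gives $q_{m,\beta}\to 0$ in the $C^2$-topology on $SS^2$, this continuity immediately yields $\widehat{q}_{m,\beta}\to 0$ in the $C^2$-topology on $S^3$. There is no real obstacle in this proof: all the substantive work (Gray stability, Weinstein's classification of prequantum circle bundles, and the quaternionic computation giving $p_0^*\psi_0=-2\upsilon^*\lambda_{\op{st}}$) has already been done upstream, and the present statement is only the explicit concatenation of those three outputs.
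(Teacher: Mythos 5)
Your proof is correct and follows essentially the same approach as the paper: compose $p_{m,\beta}:=F_{m,\beta}\circ B_\beta\circ p_0$, chain the pullback identities from Lemma \ref{lemconb}, Corollary \ref{corconb} and Proposition \ref{stalif}, set $\widehat{q}_{m,\beta}:=q_{m,\beta}\circ B_\beta\circ p_0$, and deduce $C^2$-convergence from that of $q_{m,\beta}$. The only addition is your explicit remark that precomposition with the fixed smooth map $B_\beta\circ p_0$ is $C^2$-continuous, which the paper leaves implicit.
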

\begin{proof}
Lemma \ref{lemconb} gives us $F_{m,\beta}:SS^2\rightarrow SS^2$ and $q_{m,\beta}:SS^2\rightarrow \mathbb R$. Corollary \ref{corconb} gives us $B_\beta:SS^2_0\rightarrow SS^2$. If we set $p_{m,\beta}:=F_{m,\beta}\circ B_\beta\circ p_0:S^3\rightarrow SS^2$, then
\begin{eqnarray*}
p_{m,\beta}^*\tau^{m,\beta}=p_0^*\big(B_\beta^*(F_{m,\beta}^*\tau^{m,\beta})\big)&=&p_0^*\big(B_\beta^*(e^{q_{m,\beta}}\tau^{0,\beta})\big)\\
&=&p_0^*(e^{q_{m,\beta}\circ B_\beta}\psi_0)\\
&=&-2e^{q_{m,\beta}\circ B_\beta\circ p_0}\upsilon^*\lambda_{\op{st}}.
\end{eqnarray*}
Defining $\widehat{q}_{m,\beta}:=q_{m,\beta}\circ B_\beta\circ p_0$ we only need to show that $\widehat{q}_{m,\beta}$ goes to $0$ in the $C^2$ topology. This is true since, by Lemma \ref{lemconb}, the same holds for $q_{m,\beta}$. 
\end{proof}

The final step in the proof of Proposition \ref{convi} is to notice that contact forms of the type $\rho\upsilon^*\lambda_{\op{st}}\in\Omega^1(S^3)$ with $\rho:S^3\rightarrow(0,+\infty)$ arise from embeddings of $S^3$ in $\mathbb C^2$ as the boundary of a star-shaped domain. In order to see this, define $\upsilon_{\sqrt{\rho}}:S^3\hookrightarrow \C^2$ as $\upsilon_{\sqrt{\rho}}(z):=\sqrt{\rho(z)}\upsilon(z)$. A computation shows that  $\upsilon_{\sqrt{\rho}}^*\lambda_{\op{st}}=\rho\upsilon^*\lambda_{\op{st}}$. If we define the function $Q_{\rho}:\mathbb C^2\rightarrow[0,+\infty)$ by $Q_{\rho}(z):=|z|^2/\rho(\frac{z}{|z|})$, then $\upsilon_{\sqrt{\rho}}(S^3)=\{Q_{\rho}=1\}$. As a consequence, $\upsilon_{\sqrt{\rho}}(S^3)$ is convex if and only if the Hessian of $Q_{\rho}$ is positive definite.

\begin{proof}[Proof of Proposition \ref{convi}]
Using the observation developed in the last paragraph we see that $\widehat{p}_{m,\beta}^*\tau^{m,\beta}=-\upsilon_{m,\beta}^*\lambda_{\op{st}}$ with $\upsilon_{m,\beta}:=\upsilon_{\sqrt{\rho_{m,\beta}}}$ and $\rho_{m,\beta}:=2e^{\widehat{q}_{m,\beta}}$. For small $m$, $\rho_{m,\beta}$ is $C^2$-close to the constant $2$ and, therefore, a direct computation shows that $Q_{\rho_{m,\beta}}$ has positive definite Hessian. Hence, the embedding $\upsilon_{m,\beta}$ is convex and the proof is complete.
\end{proof}

\subsection{A direct estimate of the index}\label{sub_ies}
In this subsection we are going to present an alternative proof of Proposition \ref{corfinalo}. The core of this approach is to give a direct proof of the dynamical convexity of $\tau^{m,\beta}$. After writing this note, we found out that an analogous argument is used in \cite[Section 3.2]{hrysal1}. 

Let $\Phi^{m,\beta}$ be the flow of $R^{m,\beta}$ and $\xi^{m,\beta}:=\ker\tau^{m,\beta}$ be the contact structure associated to $\tau^{m,\beta}$. The form $\omega_m=md\alpha-\pi^*\sigma$ is symplectic when restricted to $\xi^{m,\beta}$. Since $H^2(\R\Pro^3,\Z)=0$, $c_1(\xi^{m,\beta})=0$ and the Conley-Zehnder index of contractible periodic orbits of $R^{m,\beta}$ is well-defined. In the next lemma we exhibit an explicit global trivialization of $\xi^{m,\beta}$.

\begin{lem}
The contact structure $\xi^{m,\beta}$ admits a global $\omega_m$-symplectic frame:
\begin{equation}
\left\{ \begin{array}{rcl}
{\displaystyle H^{m,\beta}}&:=&\displaystyle\frac{\widetilde{H}^{m,\beta}}{\sqrt{h^{m,\beta}}}\vspace{.2cm}\\
{\displaystyle X^{m,\beta}}&:=&\displaystyle\frac{\widetilde{X}^{m,\beta}}{\sqrt{h^{m,\beta}}}	
\end{array}\right.,
\quad \mbox{where}\quad
\left\{\begin{array}{rcl}
{\displaystyle \widetilde{H}^{m,\beta}}&:=&\displaystyle H+\beta_x\big(\jmath_x(v)\big)V, \vspace{.2cm}\\
{\displaystyle \widetilde{X}^{m,\beta}}&:=&\displaystyle X+\big(\beta_x(v)-m\big)V.	
\end{array}\right.
\end{equation}
Call $\chi^{m,\beta}:\xi^{m,\beta}\rightarrow(\epsilon_{SS^2},\omega_0)$ the symplectic trivialization associated to this frame. It is given by
$\chi^{m,\beta}(Z)=\sqrt{h^{m,\beta}}(\eta(Z),\alpha(Z))\in\R^2$.
\end{lem}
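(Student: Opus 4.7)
The plan is to verify the three assertions (that $\widetilde{H}^{m,\beta},\widetilde{X}^{m,\beta}$ lie in $\xi^{m,\beta}$, that they pair symplectically with a scalar equal to $h^{m,\beta}$, and that the associated trivialization has the stated formula) by reading everything off the canonical coframe $(\alpha,\psi,\eta)$ introduced in Subsection~\ref{sub_geo}. The key auxiliary identities I would collect first are: $\tau^{m,\beta}=m\alpha-\pi^*\beta+\psi$ with $(\pi^*\beta)(X)=\beta_x(v)$, $(\pi^*\beta)(H)=\beta_x(\jmath_xv)$ and $(\pi^*\beta)(V)=0$, giving $\tau^{m,\beta}(X)=m-\beta_x(v)$, $\tau^{m,\beta}(H)=-\beta_x(\jmath_xv)$ and $\tau^{m,\beta}(V)=1$. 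Substituting these directly into $\widetilde{X}^{m,\beta}$ and $\widetilde{H}^{m,\beta}$ yields $\tau^{m,\beta}(\widetilde{X}^{m,\beta})=\tau^{m,\beta}(\widetilde{H}^{m,\beta})=0$, so both sections lie in $\xi^{m,\beta}$, and linear independence is immediate since their $H$- and $X$-components along the canonical frame are nonzero and distinct.

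Next I would compute $\omega_m$ on the basis $(X,V,H)$. From $d\alpha=\psi\wedge\eta$ one gets $d\alpha(V,H)=1$ and $d\alpha(X,\cdot)=0$. For the magnetic piece, I would remark that $\pi^*\mu=\alpha\wedge\eta$ on $SS^2$ (this is checked by evaluating on the canonical basis, using $d\pi(X)=v$, $d\pi(H)=\jmath_xv$, $d\pi(V)=0$ and $\mu_x(v,\jmath_xv)=1$), so $\pi^*\sigma=f\,\alpha\wedge\eta$. Thus $\omega_m(X,V)=0$, $\omega_m(X,H)=-f$, $\omega_m(V,H)=m$. A short bilinear expansion then gives
\begin{equation}
\omega_m(\widetilde{H}^{m,\beta},\widetilde{X}^{m,\beta})=\omega_m(H,X)-m\bigl(\beta_x(v)-m\bigr)=f(x)-m\beta_x(v)+m^2.
\end{equation}
Since $h^{m,\beta}=\tau^{m,\beta}(X^m)=\tau^{m,\beta}(mX+fV)=m^2-m\beta_x(v)+f(x)$, this equals $h^{m,\beta}$. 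Dividing both sections by $\sqrt{h^{m,\beta}}$ (which is positive on $[0,m_\beta)$ by construction) normalizes the symplectic pairing to $1$, so $(H^{m,\beta},X^{m,\beta})$ is a global $\omega_m$-symplectic frame of $\xi^{m,\beta}$.

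Finally, for the trivialization, I would observe that $\eta$ and $\alpha$ are dual to $H$ and $X$ modulo $V$ in the canonical coframe, and $V$ has been absorbed into the definitions of $\widetilde{H}^{m,\beta}$ and $\widetilde{X}^{m,\beta}$ so that these additions are invisible to $\eta$ and $\alpha$. Concretely, if $Z=aH^{m,\beta}+bX^{m,\beta}$ then $\eta(Z)=a/\sqrt{h^{m,\beta}}$ and $\alpha(Z)=b/\sqrt{h^{m,\beta}}$, giving $\chi^{m,\beta}(Z)=\sqrt{h^{m,\beta}}(\eta(Z),\alpha(Z))=(a,b)$. No real obstacle arises; the only point that requires a moment of care is the identity $\pi^*\mu=\alpha\wedge\eta$, which underlies the computation of $\pi^*\sigma$ on the canonical frame.
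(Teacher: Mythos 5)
Your proof is correct and follows essentially the same route as the paper: you read off $\tau^{m,\beta}$ on the canonical frame to show the modified sections lie in $\xi^{m,\beta}$, expand $\omega_m$ bilinearly to get the pairing equal to $h^{m,\beta}$, and then compute the trivialization coordinates from $\eta$ and $\alpha$. The only cosmetic difference is that you verify the coefficients $\beta_x(\jmath_x v)$ and $\beta_x(v)-m$ by substitution rather than deriving them as unknowns, and you spell out the identity $\pi^*\mu=\alpha\wedge\eta$, which the paper takes as known.
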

\begin{proof}
To find a basis for $\xi^{m,\beta}$, we set $\widetilde{H}^{m,\beta}:=H+a_HV$ and $\widetilde{X}^{m,\beta}:=X+a_XV$, for some $a_H,a_X\in\mathbb R$. Imposing $\tau^{m,\beta}(\widetilde{H}^{m,\beta})=0$, we get 
\begin{equation*}
0=m\alpha(H+a_HV)-\pi^*\beta(H+a_HV)+\psi(H+a_HV)=0-\beta_x\big(\jmath_x(v)\big)+a_H.
\end{equation*}
Hence, we have $a_H=\beta_x\big(\jmath_x(v)\big)$. In the same way we find $a_X=\beta_x(v)-m$. In order to modify this basis into a symplectic one, we compute
\begin{eqnarray*}
\omega_m(\widetilde{H}^{m,\beta},\widetilde{X}^{m,\beta})&=&\omega_m(H+a_HV,X+a_XV)\\
&=&\omega_m(H,X)+a_X\omega_m(H,V)+a_H\omega_m(V,X)\\
&=&-(-f)+a_X\cdot(-m)+a_H\cdot0\\
&=&h_{m,\beta}.
\end{eqnarray*}
Thus $\left(H^{m,\beta},X^{m,\beta}\right)$, as defined in the statement of this lemma, is a symplectic basis. To find the coordinates of a vector $Z=a^1H^{m,\beta}+a^2X^{m,\beta}$ with respect to this basis, we notice that
\begin{equation*}
\eta(Z)=\eta(a^1H^{m,\beta}+a^2X^{m,\beta})=a^1\eta(H^{m,\beta})+a^2\eta(X^{m,\beta})=\frac{a^1}{\sqrt{h_{m,\beta}}}+a^2\cdot0.
\end{equation*}
In the same way, $\alpha(Z)=\frac{a^2}{\sqrt{h_{m,\beta}}}$, so that $(a^1,a^2)=\sqrt{h_{m,\beta}}(\eta(Z),\alpha(Z))$.
\end{proof}

To compute the index, we consider for each $z=(x,v)\in SS^2$ the path of symplectic matrices
\begin{equation}
\Psi^{m,\beta}_z(t):=\chi^{m,\beta}_{\Phi^{m,\beta}_t(z)}\circ d_z\Phi^{m,\beta}_t\circ \left(\chi^{m,\beta}_z\right)^{-1}\in \operatorname{Sp}(1).
\end{equation}
We define the auxiliary path $B^{m,\beta}_z(t):=\dot{\Psi}^{m,\beta}_z(\Psi^{m,\beta}_z)^{-1}\in\mathfrak{gl}(2,\R)$. The bracket relations \eqref{bra} for $(X,V,H)$ allow us to give the following estimate for this path.
\begin{lem}\label{linea}
We can write $B^{m,\beta}_z=J_{\op{st}}+\rho^{m,\beta}_z$, where $\rho^{m,\beta}_z:\mathbb R\rightarrow \mathfrak{gl}(2,\R)$ is a path of matrices, whose supremum norm is of order $O(m)$ uniformly in $z$ as $m$ goes to zero. In other words, there exist $\overline{m}>0$ and $C>0$ not depending on $z$, but only on $\sup f$, $\inf f$, $\Vert df\Vert$ and $\Vert\beta\Vert$, such that
\begin{equation}\label{odim}
\forall m\leq \overline{m},\quad\Vert\rho^{m,\beta}_z\Vert\leq Cm.
\end{equation}
\end{lem}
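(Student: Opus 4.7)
The plan is to reduce $B^{m,\beta}_z(t)$ to a pointwise matrix field on $SS^2$ and then to Taylor-expand that field in $m$ around $m=0$.

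Since $R^{m,\beta}$ is a Reeb vector field, $\mathcal{L}_{R^{m,\beta}}\tau^{m,\beta}=0$ forces $[R^{m,\beta},W]\in\xi^{m,\beta}$ for every section $W$ of $\xi^{m,\beta}$. Expanding $[R^{m,\beta},H^{m,\beta}]$ and $[R^{m,\beta},X^{m,\beta}]$ in the frame $(H^{m,\beta},X^{m,\beta})$ therefore produces a smooth matrix field $M\colon SS^2\to\mathfrak{gl}(2,\R)$. A short calculation, starting from the defining identity $d\Phi^{m,\beta}_t e_i(z)=\Psi^j_i(t)\,e_j(\Phi^{m,\beta}_t(z))$, applying $d\Phi^{m,\beta}_{-t}$ to it, and differentiating in $t$, yields the linear ODE $\dot\Psi=-M(\Phi^{m,\beta}_t(z))\,\Psi$ with $\Psi(0)=I$. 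Hence $B^{m,\beta}_z(t)=-M(\Phi^{m,\beta}_t(z))$, and it suffices to show $\|-M(p)-J_{\op{st}}\|=O(m)$ uniformly in $p\in SS^2$.

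At $m=0$ one has $h_{0,\beta}=f$, $X^0=fV$, and hence $R^{0,\beta}=V$. Using the bracket relations \eqref{bra}, the identities $V(\beta_x(v))=\beta_x(\jmath v)$ and $V(\beta_x(\jmath v))=-\beta_x(v)$ (which follow from $V$ generating fiberwise rotation), and $V(f)=0$, a direct computation produces $[V,\widetilde H^{0,\beta}]=-\widetilde X^{0,\beta}$ and $[V,\widetilde X^{0,\beta}]=\widetilde H^{0,\beta}$, and therefore $[V,H^{0,\beta}]=-X^{0,\beta}$ and $[V,X^{0,\beta}]=H^{0,\beta}$. Equivalently, $M|_{m=0}=-J_{\op{st}}$, so that $B^{0,\beta}_z(t)\equiv J_{\op{st}}$.

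For the uniform $O(m)$ estimate I use that the expressions $R^{m,\beta}=h_{m,\beta}^{-1}(mX+fV)$, $H^{m,\beta}=h_{m,\beta}^{-1/2}(H+\beta_x(\jmath v)V)$ and $X^{m,\beta}=h_{m,\beta}^{-1/2}(X+(\beta_x(v)-m)V)$ depend smoothly on $m\in[0,m_\beta)$, with $h_{m,\beta}$ bounded away from zero there. Hence $M(z,m)$ is jointly smooth in $(z,m)$, and Taylor's theorem gives $M(z,m)+J_{\op{st}}=m\,\partial_m M(z,0)+O(m^2)$ uniformly in $z$. The key check is that $\partial_m M(z,0)$ is bounded pointwise in terms only of $\sup f$, $\inf f$, $\|df\|$ and $\|\beta\|$: carrying out the bracket computation to first order in $m$ via \eqref{bra} and the Leibniz rule produces an explicit expression in $f$, $1/f$, the horizontal derivatives of $f$ (controlled by $\|df\|$), and the scalars $\beta_x(v)$, $\beta_x(\jmath v)$ together with their $X$- and $H$-derivatives, which on the compact $SS^2$ reduce via the Levi--Civita connection and the constraint $d\beta=\overline\sigma$ to quantities controlled by the stated geometric data. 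Choosing $\overline m$ so that the quadratic remainder is absorbed completes the estimate. The main technical difficulty is precisely this last algebraic bookkeeping: expanding the normalized frame and its brackets to first order in $m$ and verifying that the resulting constant has the advertised dependence.
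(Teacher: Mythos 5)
Your proposal is correct and follows essentially the same strategy as the paper: linearize the Reeb flow in the global symplectic frame $(H^{m,\beta},X^{m,\beta})$, reduce the coefficient matrix $B^{m,\beta}_z$ of the resulting first-order ODE to the bracket matrix of $[R^{m,\beta},\cdot]$ evaluated along the flow, verify that at $m=0$ (where $R^{0,\beta}=V$ and $h_{0,\beta}=f$) the brackets $[V,H^{0,\beta}]=-X^{0,\beta}$ and $[V,X^{0,\beta}]=H^{0,\beta}$ give $J_{\op{st}}$, and then bound the error by smoothness in $m$. One place where your write-up is actually a bit sharper than the paper's: the paper only asserts that $m\mapsto[R^{m,\beta},H^{m,\beta}]$ is \emph{continuous} in the $C^0$-topology (which by itself would only yield $o(1)$, not $O(m)$), whereas your explicit invocation of Taylor's theorem using the joint smoothness of $M(z,m)$ on the compact $SS^2\times[0,\overline m]$ is what one really needs to obtain the linear-in-$m$ rate. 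I would also flag, for both your argument and the paper's, that the sketch of the dependence of the constant $C$ on the listed geometric quantities is a bit optimistic: the first-order term $\partial_m M(z,0)$ involves $X$- and $H$-derivatives of $\beta_x(v)$ and $\beta_x(\jmath v)$, i.e.\ the full covariant derivative $\nabla\beta$ (not just $d\beta$, which is what the constraint $d\beta=\overline\sigma$ controls), as well as the Gaussian curvature $K$ via $[X,H]=KV$; so strictly speaking the constant depends on $\Vert\nabla\beta\Vert$ and $\Vert K\Vert$ too, not merely on $\sup f$, $\inf f$, $\Vert df\Vert$ and $\Vert\beta\Vert$. This does not affect the substance of the lemma — the uniformity in $z$ is all that is used later — but it is worth being precise about if you expand the bookkeeping.
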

\begin{proof}
For any point $z\in SS^2$ and for any $\overrightarrow{a_0}=(a^1_0,a^2_0)\in\mathbb R^2$ we have a path $\overrightarrow{a}=(a^1,a^2):\mathbb R\rightarrow\mathbb R^2$ defined by the relation
\begin{equation}\label{lintri}
\overrightarrow{a}=\Psi^{m,\beta}_z\overrightarrow{a}_0.
\end{equation}
Using the definition of $\Psi^{m,\beta}_z$, we see that $\overrightarrow{a}$ satisfies
\begin{equation}
Z^{\overrightarrow{a_0}}_z(t):=d_z\Phi^{m,\beta}_t\left(a^1_0H^{m,\beta}_z+a^2_0X^{m,\beta}_z\right)=a^1(t)H^{m,\beta}_{\Phi^{m,\beta}_t(z)}+a^2(t)X^{m,\beta}_{\Phi^{m,\beta}_t(z)}.
\end{equation} 
If we differentiate with respect to $t$ the identity
\begin{equation}\label{equadif}
a^1_0H^{m,\beta}_z+a^2_0X^{m,\beta}_z=d_{\Phi^{m,\beta}_t(z)}\Phi^{m,\beta}_{-t}Z^{\overrightarrow{a_0}}_z(t),
\end{equation}
we get the following differential equation for $\overrightarrow{a}$:
\begin{equation}\label{linsym}
\begin{array}{rcl}
0&=&\dot{a}^1(t)H^{m,\beta}_{\Phi^{m,\beta}_t(z)}+\dot{a}^2(t)X^{m,\beta}_{\Phi^{m,\beta}_t(z)}\displaystyle\vspace{.2cm}\\
&+&a^1(t)\left[R^{m,\beta},H^{m,\beta}\right]_{\Phi^{m,\beta}_t(z)}+a^2(t)\left[R^{m,\beta},X^{m,\beta}\right]_{\Phi^{m,\beta}_t(z)}.\displaystyle
\end{array}
\end{equation}

To estimate the first Lie bracket, we observe that $m\mapsto [R^{m,\beta},H^{m,\beta}]$ is a $\Gamma(SS^2)$ valued map which is continuous in the $C^0$-topology since $m\mapsto R^{m,\beta}$ and $m\mapsto H^{m,\beta}$ are continuous in the $C^1$-topology. Hence, $\left[R^{m,\beta},H^{m,\beta}\right]=\left[R^{0,\beta},H^{0,\beta}\right]+\rho$, where $\rho$ is an $O(m)$ in the $C^0$-topology, as $m$ goes to zero. Furthermore,
\begin{eqnarray*}
\left[R^{0,\beta},H^{0,\beta}\right]=\left[V,\frac{1}{\sqrt{f}}\big(H+(\beta\circ\jmath) V\big)\right]&=&\frac{1}{\sqrt{f}}\Big([V,H]+V(\beta\circ\jmath)V\Big)\\
&=&\frac{1}{\sqrt{f}}\Big(-X-\beta V\Big)\\
&=&-X^{0,\beta}\\
&=&-X^{m,\beta}+\rho'.
\end{eqnarray*}
Putting things together, $\left[R^{m,\beta},H^{m,\beta}\right]=-X^{m,\beta}+\rho_1$, where $\rho_1$ is an $O(m)$ in the $C^0$-topology. In a similar way we find that $\left[R^{m,\beta},X^{m,\beta}\right]=H^{m,\beta}+\rho_2$. Substituting these expressions for the Lie brackets inside \eqref{linsym}, we find
\begin{equation}
\dot{a}^1H^{m,\beta}+\dot{a}^2X^{m,\beta}=a^1X^{m,\beta}-a^2H^{m,\beta}-a^1\rho_1-a^2\rho_2.
\end{equation}
Applying the trivialization $\chi^{m,\beta}$ we get
\begin{equation}\label{equadifa}
\dot{\overrightarrow{a}}=(J_{\op{st}}+\rho^{m,\beta}_z)\overrightarrow{a},
\end{equation}
where $\rho^{m,\beta}_z\in\mathfrak{gl}(2,\R)$ is of order $O(m)$.

On the other hand, differentiating Equation \eqref{lintri}, we have
\begin{equation}\label{equadifb}
\dot{\overrightarrow{a}}=\dot{\Psi}^{m,\beta}_z\overrightarrow{a}_0=\dot{\Psi}^{m,\beta}_z\left(\Psi^{m,\beta}_z\right)^{-1}\overrightarrow{a}=B^{m,\beta}_z\overrightarrow{a}.
\end{equation}
Thus, comparing \eqref{equadifa} and \eqref{equadifb}, we finally arrive at $B^{m,\beta}_z=J_{\op{st}}+\rho^{m,\beta}_z$.
\end{proof}

The previous lemma together with the following proposition reduces the condition of dynamical convexity to a condition on the period of Reeb orbits. First, for each $Z\in\Gamma(SS^2)$ we call $T_0(Z)$ the minimal period of a closed contractible orbit of $\Phi^Z$. We set $T_0(Z)=0$, if $\Phi^Z$ has a rest point. We remark that the map $Z\mapsto T_0(Z)$ is lower semicontinuous with respect to the $C^0$-topology.
\begin{prp}\label{finpro}
Let $C$ be the constant contained in \eqref{odim}. If the inequality 
\begin{equation}\label{dincopera}
 \frac{2\pi}{T_0(R^{m,\beta})}<1-Cm,
\end{equation}
is satisfied, then $\tau^{m,\beta}$ is dynamically convex.
\end{prp}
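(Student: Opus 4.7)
The plan is to combine the angular-velocity bound coming from Lemma \ref{linea} with the characterisation \eqref{rmkdyn} of the Conley--Zehnder index: I will show that under the hypothesis, along any contractible periodic Reeb orbit the linearised flow rotates strictly more than $2\pi$ on every ray, forcing the index to be at least $3$.

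First I need to know that for a contractible periodic orbit $\gamma$ of $R^{m,\beta}$ the Conley--Zehnder index is computed directly by $\mu_{\op{CZ}}(\Psi^{m,\beta}_{\gamma(0)})$, without extra bookkeeping over a spanning disk. This is because $H^2(SS^2,\Z)=0$ forces $c_1(\xi^{m,\beta})$ to vanish on $\pi_2(SS^2)$, so the index is independent of the chosen disk trivialisation; the global symplectic frame underlying $\chi^{m,\beta}$ extends over any spanning disk and does the job.

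Next comes the key estimate. After identifying $\R^2\cong\C$ so that $J_{\op{st}}$ acts as multiplication by $i$, any nonzero solution $w(t)=\Psi^{m,\beta}_z(t)u$ of $\dot w=B^{m,\beta}_zw=(J_{\op{st}}+\rho^{m,\beta}_z)w$ written in polar form $w=re^{i\theta}$ satisfies $\dot\theta=1+\operatorname{Im}(e^{-i\theta}\rho^{m,\beta}_zw)/r$. Since $|e^{-i\theta}\rho^{m,\beta}_zw|=|\rho^{m,\beta}_zw|\le\Vert\rho^{m,\beta}_z\Vert\,r$, the bound $\Vert\rho^{m,\beta}_z\Vert\le Cm$ from Lemma \ref{linea} yields $\dot\theta\ge 1-Cm$ pointwise and independently of $u$.

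Finally, let $\gamma$ be an arbitrary contractible periodic orbit of $R^{m,\beta}$ with period $T$. Then $T\ge T_0(R^{m,\beta})$, and the hypothesis $2\pi/T_0(R^{m,\beta})<1-Cm$ yields $T(1-Cm)>2\pi$. Integrating the angular-velocity bound over one period gives $\Delta\theta(\Psi^{m,\beta}_{\gamma(0)},u)\ge(1-Cm)T>2\pi$ for every $u$, so $I(\Psi^{m,\beta}_{\gamma(0)})\subset(1,+\infty)$ and \eqref{rmkdyn} with $k=1$ yields $\mu_{\op{CZ}}(\gamma)\ge 3$. Hence $\tau^{m,\beta}$ is dynamically convex. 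The only non-bookkeeping ingredient is the angular-velocity estimate $\dot\theta\ge 1-Cm$, which reduces to reading off the imaginary part of $\dot w/w$ in polar form and applying the operator-norm bound on $\rho^{m,\beta}_z$; I do not expect any serious obstacle beyond being careful about the identification $\R^2\cong\C$.
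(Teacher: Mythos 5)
Your argument is correct and follows the same route as the paper: both derive the pointwise angular-velocity bound $\dot\theta \ge 1 - \Vert\rho^{m,\beta}_z\Vert \ge 1-Cm$ from Lemma \ref{linea}, integrate over one period to obtain a total winding exceeding $2\pi$, and then invoke criterion \eqref{rmkdyn} with $k=1$. The only difference is cosmetic: you compute $\dot\theta$ by reading off $\operatorname{Im}(\dot w/w)$ in polar form, while the paper writes the same quantity as $g_{\op{st}}(\dot\Psi u, J_{\op{st}}\Psi u)/|\Psi u|^2$; one small inherited imprecision is the claim $H^2(SS^2,\Z)=0$ (in fact $H^2(\R\Pro^3,\Z)\cong\Z/2$), but well-definedness of the index follows anyway since $\pi_2(\R\Pro^3)=0$.
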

\begin{proof}
Let $\zeta$ be a contractible periodic orbit for $R^{m,\beta}$ with period $T$ and such that $\zeta(0)=z$. Consider $\Psi^{m,\beta}_z\big|_{[0,T]}\in\op{Sp}_T(1)$ defined as before and fix a $u\in\R^2\setminus\{0\}$. If $\theta_u^{\Psi^{m,\beta}_z}$ is defined by Equation \eqref{teta}, we can bound its first derivative by means of Lemma \ref{linea}, as follows:
\begin{eqnarray*}
\dot{\theta}_u^{\Psi^{m,\beta}_z}=\frac{g_{\op{st}}\big(\dot{\Psi}^{m,\beta}_zu,J_{\op{st}}\Psi^{m,\beta}_zu\big)}{|\Psi^{m,\beta}_zu|^2}&=&\frac{g_{\op{st}}\big(B^{m,\beta}_z\Psi^{m,\beta}_zu,J_{\op{st}}\Psi^{m,\beta}_zu\big)}{|\Psi^{m,\beta}_zu|^2}\\
&=&\frac{g_{\op{st}}\big((J_{\op{st}}+\rho^{m,\beta}_z)\Psi^{m,\beta}_zu,J_{\op{st}}\Psi^{m,\beta}_zu\big)}{|\Psi^{m,\beta}_zu|^2}\\
&=&\frac{|\Psi^{m,\beta}_zu|^2+g_{\op{st}}\big(\rho^{m,\beta}_z\Psi^{m,\beta}_zu,J_{\op{st}}\Psi^{m,\beta}_zu\big)}{|\Psi^{m,\beta}_zu|^2}\\
&\geq&1-\Vert \rho^{m,\beta}_z\Vert.
\end{eqnarray*}
Hence, we can estimate the normalized increment in the interval $[0,T]$ by
\begin{equation}
\Delta(\Psi^{m,\beta}_z\big|_{[0,T]},u)=\frac{1}{2\pi}\int_0^T\dot{\theta}_u^{\Psi^{m,\beta}_z}(t)dt\geq (1-Cm)\frac{T}{2\pi}.
\end{equation}
Therefore, by criterion \eqref{rmkdyn}, $\mu_{\op{CZ}}(\zeta)\geq 3$ provided $(1-Cm)\frac{T}{2\pi}>1$. Asking this condition for every contractible periodic orbit is the same as asking Inequality \eqref{dincopera} to hold. The proposition is thus proved.
\end{proof}
We are now ready to reprove Proposition \ref{corfinalo}.
\begin{proof}[Second proof of Proposition \ref{corfinalo}]
Let $p:S^3\rightarrow SS^2$ be a double cover. We have to show that $p^*\tau^{m,\beta}$ is both tight and dynamically convex.

To prove tightness, we can argue in two ways. On the one hand, we can use the fact that, if $p_{m,\beta}$ is the covering map given by Proposition \ref{convi}, we have that $p_{m,\beta}^*\tau^{m,\beta}$ is tight, since it is proportional to the standard contact form $\upsilon^*\lambda_{\op{st}}$, which is tight. Finally, there exists a diffeomorphism $F:S^3\rightarrow S^3$ such that $p_{m,\beta}=p\circ F$. A more abstract argument, independent of the discussion in the previous subsection, starts by denoting with $\xi_{\op{st}}$ the standard tight contact structure on $S^3$. Thus, if $p_0:S^3\rightarrow \R\Pro^3$ is the double cover, $(p_0)_*\xi_{\op{st}}$ is tight as well. The contact structure $\xi^{m,\beta}$ on $SS^2$ is also tight, because it is strongly fillable (see \cite{gro} and \cite{eli3}). Since there is only one tight contact structure on $SS^2$ up to isomorphism (see \cite{eli1} and \cite{etn}), $\xi^{m,\beta}$ and $(p_0)_*\xi_{\op{st}}$ are contactomorphic. This contactomorphism lifts to a contactomorphism between $p^*\xi^{m,\beta}$ and $\xi_{\op{st}}$. 

We now claim that the dynamical convexity of $p^*\tau^{m,\beta}$ and $\tau^{m,\beta}$ are equivalent. Call $\widehat{\xi}^{m,\beta}$ and $\widehat{R}^{m,\beta}$ the contact structure and the Reeb vector field of $p^*\tau^{m,\beta}$. Let $\widehat{\Phi}^{m,\beta}$ be the flow of $\widehat{R}^{m,\beta}$. Thus, $dp\circ d\widehat{\Phi}^{m,\beta}=d\Phi^{m,\beta}\circ dp$ and $\widehat{\chi}^{m,\beta}:=\chi^{m,\beta}\circ dp$ is a trivialization of $\widehat{\xi}^{m,\beta}$. Therefore,
\begin{equation}
\widehat{\chi}^{m,\beta}_{\widehat{\Phi}^{m,\beta}_t}\circ d\widehat{\Phi}^{m,\beta}_t\circ (\widehat{\chi}^{m,\beta})^{-1}=\Psi^{m,\beta}(t).
\end{equation}
Hence, $\widehat{\zeta}$ is a contractible orbit for $\widehat{R}^{m,\beta}$ if and only if $p(\widehat{\zeta})$ is a contractible orbit for $R^{m,\beta}$ and $\mu_{\op{CZ}}(\widehat{\zeta})=\mu_{\op{CZ}}(p(\widehat{\zeta}))$. This finishes the proof of the claim.

To complete the proof of the proposition, it is enough to show that inequality \eqref{dincopera} in Proposition \ref{finpro} holds for small $m$. First, we compute the periods of contractible orbits for $R^{0,\beta}=V$. A loop going around the vertical fiber $k$ times with unit angular speed has period $2\pi k$ and is contractible if and only if $k$ is even. Hence, $T_0(R^{0,\beta})=4\pi$.

Using the lower semicontinuity of the minimal period, we find that
\begin{equation}
 \limsup_{m\rightarrow 0}\left(\frac{2\pi}{T_0(R^{m,\beta})}+Cm\right)\leq \frac{2\pi}{4\pi}+0=\frac{1}{2}<1
\end{equation}
and, therefore, the inequality is still true for $m$ small emough.
\end{proof}
\begin{rmk}\label{gengen}
We now explain briefly how to modify the previous argument to get Inequality \eqref{gendyn}. For small $m$, $\omega_m$ is still of contact type and, under the normalization $\int_M\sigma=2\pi\chi(M)$, we get that $R^{m,\beta}$ is converging to $-V$, for $m$ tending to $0$. Hence, along the lines of Lemma \ref{linea}, we find that $B^{m,\beta}_z=-J_{\op{st}}+\rho^{m,\beta}_z$. Since the order of the vertical loop in $H_1(SM,\Z)$ is $|\chi(M)|$, we get that the minimal period of a null-homologous orbit of $-V$ is $2\pi|\chi(M)|$. The estimates on $B^{m,\beta}$ and $T(R^{m,\beta})$ together give an upper bound (instead of a lower bound) on the winding interval $I(\Psi^{m,\beta}_z\big|_{[0,T]})$. Finally, Inequality \eqref{gendyn} follows using the analogous of criterion \eqref{rmkdyn} when the winding interval is bounded from above.  
\end{rmk}
We end this subsection by giving a geometric proof of Inequality \eqref{dincopera} for small values of $m$ without using the lower semicontinuity of the minimal period.

We consider a finite collection of closed disks $\mathbf D:=\{D_i\ |\ D_i\subset S^2\}$ such that the open disks $\dot{\mathbf D}:=\{\dot{D}_i\}$ cover $S^2$. We also fix a collection of vector fields of unit norm $\mathbf Z=\{Z_i\ |\ Z_i\in\Gamma(D_i),\ |Z_i|=1\}$. Let $\delta$ be the Lebesgue number of the cover $\mathbf D$ with respect to the distance induced from the Riemannian metric. Finally, let $\varphi_i:SD_i\rightarrow\R/2\pi\Z$ be the angular function associated to $Z_i$. It is defined at $(x,v)\in SD_i$ by
$v=\cos\varphi_i(Z_i)_x+\sin\varphi_i(\jmath Z_i)_x$. We set

\begin{equation}
C_{(\mathbf D,\mathbf Z)}:=\sup_i\left(\sup_{D_i}\frac{|d\varphi_i(X)|}{h_{m,\beta}}\right).
\end{equation}

Let $\varepsilon>0$ be arbitrary. We claim that there exists $m_\varepsilon>0$ such that, for $m<m_\varepsilon$, the period $T$ of a contractible periodic orbit $\zeta$ of $R^{m,\beta}$ is bigger than $4\pi-\varepsilon$. This claim immediately implies \eqref{dincopera}. Suppose first that $\pi(\zeta)$ is not contained in any $D_i$. This means that $\pi(\zeta)$ is not contained in any ball of radius $\delta$. If we denote by $\ell(\pi(\zeta))$ the length of the curve, $\ell(\pi(\zeta))\geq2\delta$ and, recalling the expression of $R^{m,\beta}$,
\begin{equation}
\ell(\pi(\zeta))=\int_0^T\left|\frac{d}{dt}\pi(\zeta)\right|dt=\int_0^T\left|\frac{m}{h_{m,\beta}}\zeta\right|dt=\int_0^T\frac{m}{h_{m,\beta}}dt\leq \frac{m}{\inf h_{m,\beta}}T.
\end{equation}
Hence, if $m<\frac{2\delta\inf h_{m,\beta}}{4\pi-\varepsilon}$, then $T> 4\pi-\varepsilon$.

Suppose, on the other hand, that $\pi(\zeta)$ is contained in some $D_{i_0}$ and consider $\widetilde{\varphi}_{i_0}:[0,T]\rightarrow\R$ a lift of $\varphi_{i_0}\circ\zeta\big|_{[0,T]}:[0,T]\rightarrow\R/2\pi\Z $. Since the curve $\zeta$ is closed and contractible in $SS^2$, $\widetilde{\varphi}_{i_0}(T)-\widetilde{\varphi}_{i_0}(0)=4\pi k$, with $k\in\Z$. On the other hand,
\begin{equation}
\widetilde{\varphi}_{i_0}(T)-\widetilde{\varphi}_{i_0}(0)=\int_0^T\frac{d\varphi_{i_0}}{dt}dt=\int_0^Td\varphi_{i_0}(R^{m,\beta})dt.
\end{equation}
Moreover, $d\varphi_{i_0}(R^{m,\beta})=md\varphi_{i_0}\left(\frac{X}{h_{m,\beta}}\right)+\frac{f}{h_{m,\beta}}$. Since $\frac{f}{h_{0,\beta}}=1$, we have, for $m$ small enough,
\begin{equation}
d\varphi_{i_0}(R^{m,\beta})\geq \inf_{SS^2} \frac{f}{h_{m,\beta}}-mC_{(\mathbf D,\mathbf Z)}>0.
\end{equation}
This implies that $k>0$ and, as a consequence, that $\widetilde{\varphi}_{i_0}(T)-\widetilde{\varphi}_{i_0}(0)\geq 4\pi$. Therefore,
\begin{equation}
4\pi\leq \int_0^Td\varphi_{i_0}(R^{m,\beta})dt\leq\left(\sup_{SS^2}\frac{f}{h_{m,\beta}}+mC_{(\mathbf D,\mathbf Z)}\right)T.
\end{equation}
Using $\frac{f}{h_{0,\beta}}=1$ again, we see that there exists $m'_\varepsilon$ such that, if $m<m'_\varepsilon$,
\begin{equation}
4\pi-\varepsilon<\frac{4\pi}{\displaystyle\sup_{SS^2}\frac{f}{h_{m,\beta}}+mC_{(\mathbf D,\mathbf Z)}}\leq T.
\end{equation}
Hence, the claim follows taking $m_\varepsilon:=\min\left\{\frac{2\delta\inf h_{m,\beta}}{4\pi-\varepsilon},m'_\varepsilon\right\}$.

\endgroup
\bibliographystyle{amsalpha}
\bibliography{paper1}
\end{document}